\def\P{{\mathbb P}}
\def\R{{\mathbb R}}
\def\N{{\mathbb N}}
\def\F {{\mathcal F}}
\newtheorem{theo}{Theorem}
\newtheorem{prop}{\indent Proposition}
\newtheorem{lem}{\indent Lemma}
\newtheorem{ass}{Assumption}
\title[Large deviations for interacting diffusions]{Large deviations for cascades of diffusions arising in oscillating systems of interacting Hawkes processes.}
\date{September 8, 2017}
\author{E. L\"ocherbach }
\address{E. L\"ocherbach: CNRS UMR 8088, D\'epartement de Math\'ematiques, Universit\'e de Cergy-Pontoise,
2 avenue Adolphe Chauvin, 95302 Cergy-Pontoise Cedex, France.}
\email{eva.loecherbach@u-cergy.fr}
\subjclass[2010]{60G17; 60G55; 60J60}
\keywords{Hawkes processes. Piecewise deterministic Markov processes. Diffusion approximation. Sample path large deviations for degenerate diffusions. Control theory for degenerate diffusions.}
\begin{document}

\maketitle

\begin{abstract}
We consider oscillatory systems of interacting Hawkes processes introduced in \cite{EvaSusanne} to model multi-class systems of interacting neurons together with the diffusion approximations of their intensity processes. This diffusion, which incorporates the memory terms defining the dynamics of the Hawkes process, is hypo-elliptic. It is given by a high dimensional chain of differential equations driven by $2-$dimensional Brownian motion. We study the large-population-, i.e.,\ small noise-limit of its invariant measure for which we establish a large deviation result in the spirit of Freidlin and Wentzell. 
\end{abstract}

\section{Introduction}
The aim of this paper is to study oscillatory systems of interacting Hawkes processes and their long time behavior. This study has been started in Ditlevsen and L\"ocherbach \cite{EvaSusanne} where multi-class systems of Hawkes processes with mean field interactions  have been introduced as microscopic models for spike trains of interacting neurons. In the large population limit, i.e.\ on a macroscopic scale, such systems present {\it oscillations}. In the present paper we concentrate on the finite population process and its large deviation properties. In particular, we will be interested in its deviations from limit cycles, i.e.\ from the {\it typical oscillatory behavior} of the limit process. 

We consider two populations of particles, the first composed by $ N_1  ,$  the second by $N_2$ particles. The total number of particles in the system is $N = N_1  +N_2$. The activity of each particle is described by a counting process $ Z^{N}_{k, i} (t), {1 \le k \le 2 , 1 \le i \le N_k} , t \geq 0,$ recording the number of ``actions'' of the $i$th particle belonging to population $k$  during the interval $ [0, t ]. $ Such ``actions'' can be ``spikes'' if we think of neurons, it can be ``transactions'', if we think of economical agents. 
The sequence of counting processes $ ( Z^{N}_{k,i} ) $ is characterized by its intensity processes $ (\lambda^N_{k, i} ( t)  ) $ which are informally defined through the relation 
$$ \P ( Z^{N}_{k, i } \mbox{ has a jump in ]t , t + dt ]} | \F_t ) = \lambda^N_{k } (t)  dt , {1 \le k \le 2 , 1 \le i \le N_k} ,$$
where $ \F_t = \sigma (  Z^{N}_{k, i} (s)  , \, s \le t , {1 \le k \le 2 , 1 \le i \le N_k} ) ,$ and where
\begin{equation}\label{eq:intensity}
\lambda^N_{1} ( t)  = f_1  \left( \frac{1}{N_2}\sum_{1 \le j \le N_2}     \int_{]0 , t [} h_{12 }( t-s) d Z^N_{ 2,j} (s)  \right) 
\end{equation}
and 
\begin{equation}\label{eq:intensity2}
\lambda^N_{2} ( t)  = f_2  \left( \frac{1}{N_1}\sum_{1 \le j \le N_1}     \int_{]0 , t [} h_{21 }( t-s) d Z^N_{ 1,j} (s)  \right) .
\end{equation}
The function $f_k$ is called the jump rate function of population $k,$ and the functions $h_{12}, h_{21} $ are the ``memory'' or ``interaction''  kernels of the system. Note that $  \lambda^N_{1} ( t)  $ and $ \lambda^N_{2} ( t) $ encode the interactions of the system and that the way the intensities are defined, particles belonging to the first population depend only on the past jumps of the particles belonging to the second population, and vice versa. In particular, no self-interactions are included in our model.

The form of the intensities \eqref{eq:intensity} is the typical form of the intensity of a multivariate nonlinear Hawkes process. Hawkes processes have been introduced by Hawkes \cite{Hawkes} and Hawkes and Oakes \cite{ho} as a model for earthquake appearances. Recently, they have regained a lot of interest as good models in neuroscience but also in financial econometrics, see e.g.\ Hansen et al.\ \cite{hrbr} and Chevallier \cite{julien} for the use of Hawkes processes as models of spike trains in neuroscience, see Delattre et al. \cite{mathieu} for the use of Hawkes processes in financial modeling. Finally, we refer the reader to Br\'emaud and Massouli\'e \cite{bm} for the stability properties of nonlinear Hawkes processes. 

By the form \eqref{eq:intensity} and \eqref{eq:intensity2} of the intensities, we are in a mean-field frame, that is, the intensity processes of one population depend only on the empirical measure of the other population. We will suppose that $ N  \to \infty $ such that for $k=1,2,  $ 
$$\lim_{N \to \infty} \frac{N_k}{N} \, \mbox{ exists and is in } \, ]0, 1[ .$$ 
In \cite{EvaSusanne}, we have shown that in the large population limit, when $ N \to \infty, $ self-sustained periodic behavior emerges even though each single particle does not follow periodic dynamics. In the present paper we show how this periodic behavior is also felt at a finite population size.

\subsection{An associated cascade of diffusion processes}
We represent the Hawkes processes via the associated processes 
$$
X^N_{1} (t) =\frac{1}{N_{2}} \sum_{ j = 1 }^{N_{2}}  \int_{]0, t]} h_{12 } (  t- s ) d Z^N_{2, j } ( s)  
, \; 
X^N_{2} (t) =\frac{1}{N_{1}} \sum_{ j = 1 }^{N_{1}}  \int_{]0, t]} h_{21 } (  t- s ) d Z^N_{1, j } ( s)  .
$$
Each particle belonging to the first population jumps at rate $ f_1 ( X^N_1 (t- ) ) , $ and each particle belonging to the second population at rate $f_2 ( X^N_2 (t- ) ) , $ at time $t.$ If the memory kernels $h_{12} $ and $ h_{21}$ are  exponential, then the system $ (X^N_1 (t) , X^N_2 (t))_{t \geq 0}$ is a piecewise deterministic Markov processes (PDMP). In the present paper, we do not choose exponential memory kernels, since they induce a very short memory. Instead of this, we consider {\it Erlang memory kernels} 
$$ h_{12 } (s) =  c_1 e^{ - \nu_1 s } \frac{ s^{n_1}}{n_1!}  ,  h_{21 } (s) =  c_2 e^{ - \nu_2 s } \frac{ s^{n_2}}{n_2!} ,$$
where $c_1 , c_2 \in \R , $ where $ \nu_1 , \nu_2 $ are positive constants and $ n_1 , n_2 \in \N $ the length of the delay within the memory kernel. Such kernels allow for delays in the transmission of information. In this case, the processes $ (X^N_1 (t) , X^N_2 (t))_{t \geq 0} $ alone are not Markov, but they can be completed by a {\it cascade} of processes $X^N_{k, l } , l=1, \ldots , n_k +1 , k=1, 2, $ such that this cascade is Markov. The equations defining the cascade are given by
\begin{equation}\label{eq:cascadepdmp0}
\left\{ 
\begin{array}{lcl}
d X^N_{1, l } (t) &=& [ -  \nu_{1} X^N_{1, l  } ( t) + X^N_{1, l+1} (t) ] dt , \; 1 \le l \le n_1 ,  \\
d X^N_{1, n_1+1} (t) &=& -  \nu_{1} X^N_{1,  n_1+1} (t) dt + \frac{c_1}{N_{2}} \sum_{j=1}^{N_{2} }d  Z^N_{2, j} (t) ,
\end{array}
\right.
\end{equation}
where $X^N_{1 }$ is identified with $X^N_{1, 1 } $ and where each $Z^N_{2, j } $ jumps at rate $f_{2} (  X^N_{2} (t- ) ) .$ A similar cascade describes the evolution of the second population. 
Notice that for each population the length of the cascade is related to the length of delay in the corresponding memory kernels. 

In the ``large jump intensity, small jump height''-regime, it is natural to study the {\it canonical diffusion approximation} of this cascade. It is given by the following systems of equations. The process $X^N_1 (t) $ is approached by the diffusion process $  Y^N_{1, 1 } (t)  ,$ together with its successive cascade terms,  solution of

\begin{equation}\label{eq:cascadeapprox0}
\left\{ 
\begin{array}{lcl}
d Y^N_{1, l } (t) &=& [ -  \nu_{1} Y^N_{1, l  } ( t) + Y^N_{1, l +1} (t) ] dt , \;  \quad \quad 1 \le l \le n_1 ,  \\
d Y^N_{1,  n_1 +1} (t) &=& -  \nu_{1} Y^N_{1,  n_1+1} (t) dt + c_{1} f_{2} (  Y^N_{ 2, 1} (t) ) dt  +  c_{1} \frac{\sqrt{f_2 (  Y^N_{ 2, 1} )(t)  }}{ \sqrt{ N_{2}}} d B^{2}_t .
\end{array}
\right .
\end{equation}

In the above system, $B^2 $ is a one dimensional standard Brownian motion which is associated to the jump noise of the second population and appears only in the last term of the cascade. Notice also that only the last term of the cascade encodes the interactions with the second population, through the jump rate function $f_2 $ and the jump intensity $ f_2 ( Y^N_{2, 1 } ( t) )  $ of the second population.  The above system \eqref{eq:cascadeapprox0} has to be completed by a similar cascade of length $n_2 +1 $ describing the jump intensity of the second population. This diffusion approximation is a good approximation of the original cascade of PDMP's, and the weak approximation error $| E ( \varphi ( X_t^N)) - E ( \varphi ( Y_t^N)) | , t \le T,$ is of order $ T N^{-2}, $ for sufficiently smooth test functions $\varphi $ (see \cite{EvaSusanne}). 

The present paper is devoted to the study of the long time behavior of this diffusion approximation $ Y^N $ and its large deviation properties. 

Let us start by discussing the main features of this diffusion process. Firstly, we have to treat the memory terms -- the terms following the first line of the above cascade --  as auxiliary variables. This gives rise to coordinates of $ Y^N$ without noise. 
Therefore we obtain a degenerate high-dimensional diffusion process $Y^N $ driven by two-dimensional Brownian motion. This diffusion turns out to be hypo-elliptic; indeed, it is easy to check that the weak H\"ormander condition is satisfied. The drift of the diffusion is almost linear -- only the two coordinates encoding the interactions between the two populations do not have a linear drift term. 

The interactions are transported through the system according to a ``chain of reactions'', i.e.\ the drift of a given coordinate does only depend on the coordinate itself and the next one. We call this the {\it cascade structure of the drift vector field}. This structure enables us to use results on the control properties of the diffusion \eqref{eq:cascadeapprox0} obtained by Delarue and Menozzi \cite{delaruemenozzi} in a recent paper establishing density estimates for such chains of differential equations. Due to this structure, the coordinates of the diffusion do not travel at the same speed. Indeed, the coordinate $ Y^N_{1, n_1 +1} (t) ,$ driven by Brownian motion, evolves at speed $t^{1/2} , $ the coordinate $Y^N_{1, n_1} (t)  $ at speed $ t^{1+ 1/2 } $ and more generally, $ Y^N_{1, n_1 - l } (t) $ at speed $t^{l +1 + 1/2  }.$ In particular, over small time intervals $ [0, \delta ] $ and for all coordinates which are not driven by Brownian motion, the drift does play a crucial role in the control problem of our diffusion, and this is reflected in the cost associated to the control (see \cite{delaruemenozzi} and the proof of Theorem \ref{theo:6} below).     

Cascades or  chains of reactions similar to the one described in \eqref{eq:cascadeapprox0} appear also in systems of coupled oscillators in models of heat conduction where the first oscillator is forced by random noise. Rey-Bellet and Thomas \cite{reybellet} have studied the large deviation properties of such systems, and parts of our proofs are inspired by their approach.

\subsection{Monotone cyclic feedback systems}
The deterministic part of the system \eqref{eq:cascadeapprox0} is given by an $ n_1 + n_2 + 2-$dimensional dynamical system $ (x_{k, l} ( t)) , 1 \le l \le n_k+1, k = 1 , 2,$ which is solution of 
\begin{equation}\label{eq:ll}
 \frac{d x_{1,l} (t)  }{d t } = - \nu_1 x_{1, l } ( t)  + x_{1, l+1} (t)  , 1 \le l \le n_1,  \frac{d x_{1, n_1+1} (t) }{d t } = - \nu_1 x_{1, n_1+1} (t)  + c_1 f_2 ( x_{2,  1} (t) ) ,
\end{equation} 
together with the chain of equations describing the second population. This system is a {\it monotone cyclic feedback system} in the sense of Mallet-Paret and Smith \cite{malletparet-smith}. The most important point is that the long time behavior of \eqref{eq:ll}, i.e.\ the structure of its $\omega-$limit sets, is well-understood. More precisely, there exist explicit conditions ensuring the existence of a single linearly unstable equilibrium point $x^* $ of this limit system, together with a finite number of periodic orbits such that at least one of them is asymptotically orbitally stable (see Theorem \ref{theo:orbit} below). This result goes back to deep theorems in dynamical system's theory, obtained by Mallet-Paret and Smith \cite{malletparet-smith} and used in a different context in Bena\"{\i}m and Hirsch \cite{michel}, relying on the Poincar\'e-Bendixson theorem. 

In other words, there exist $ x_1, \ldots , x_M \in \R^{ n_1 + n_2 + 2 }   $ such that the solutions $\Gamma_1 (t) , \ldots, \Gamma_M (t)$ of \eqref{eq:ll} issued from these points are non-constant periodic trajectories, i.e., they are cycles (or periodic orbits). At least one of these cycles is an attractor of \eqref{eq:ll}, which means that the other solutions of \eqref{eq:ll} will converge  to this limit cycle in the long run (provided they start within the domain of attraction of this limit cycle). The limit cycles encode oscillatory behavior of the system; that is, periods where the first population has large jump intensity, while the jump intensity of the second population is small, are followed by periods where the second population has large jump intensity, but not the first one. This has been supported by simulations provided in \cite{EvaSusanne}. 

Due to the presence of noise, the diffusion $Y^N$ may switch from one limit cycle to another. But for large $N , $ $Y^N$ will tend to stay within tubes around the limit cycles $ \Gamma_1, \ldots, \Gamma_M $ during long periods, before eventually leaving such a tube after a time which is of order $ e^{ N \bar V },$ where $\bar V$ is related to the cost of steering the process from the cycle to the boundary of the tube (see Proposition \ref{prop:sigma_0first} and \ref{prop:sigma_0second} below). As time goes by, the diffusion will therefore spend very long time intervals in vicinities of one of the limit cycles -- interrupted by short lasting excursions into the rest of the state space. It is therefore natural to consider the concentration of the invariant measure $\mu^N $ of $Y^N$ around the periodic orbits -- if this invariant measure exists and is unique.   

It is not difficult to show that, for fixed $N, $ the process possesses a unique invariant probability measure $ \mu^N .$  Moreover, a Lyapunov type argument implies that the process converges to its invariant regime at exponential speed. For fixed $N, $ $\mu^N $ is of full support but its mass is concentrated around the periodic orbits of the limit system \eqref{eq:ll}. More precisely, we can show that for any open set $ D$ with compact closure and smooth boundary, 

\begin{equation}\label{eq:ld}
 \mu^N ( D) \sim C e^{ -  [\inf_{ x \in D} W (x)]  N} ,
\end{equation}

where the cost function $W(x) $ is related to the control properties of system \eqref{eq:cascadeapprox0} and is given explicitly in \eqref{eq:W} below.

In order to prove this result, we rely on the approach of Freidlin and Wentzell \cite{FW} to sample path large deviations of diffusions, developed further in Dembo and Zeitouni \cite{DZ}. Both \cite{FW} and \cite{DZ} suppose that the underlying diffusion is elliptic -- which is not the case in our situation. Recently, Rey-Bellet and Thomas \cite{reybellet} have extended the results of Freidlin and Wentzell \cite{FW} to degenerate diffusions, and our proof is inspired by their paper. The most important point of our paper is to establish the necessary control theory in our framework. For this, an important tool are recent results obtained by Delarue and Menozzi \cite{delaruemenozzi}. Moreover, since we are dealing with periodic orbits rather than with equilibrium points, we have to extend the notion of {\it small time local controllability} to the situation where the drift vector field does play a role in the sense of a shift on the orbit, see Theorem \ref{theo:stlc} below.   

This paper is organized as follows. In Section \ref{sec:2} we state the main assumptions and provide a short study of the limit system together with its $ \omega-$limit set in Theorem \ref{theo:orbit}. In Section \ref{sec:3}, we state the main results of the paper which are the positive Harris recurrence of $Y^N$ in Theorem \ref{theo:harris} together with the large deviation properties of the invariant measure $\mu^N$ of the diffusion as $ N \to \infty , $ in Theorem \ref{theo:main}. Section \ref{sec:control} provides a proof of the Harris recurrence of $Y^N, $ based on the control theorem. Finally, Section \ref{sec:5} is devoted to a study of the control properties of the process. Here, we first show that the process is strongly completely controllable. The proof of this fact relies on the prescription of a control that allows to decouple the two populations and to make use of the linear structure of the (main part of the) drift. We also study the continuity properties of the cost functional -- a study which is not trivial in the present frame of strong degeneracy of the diffusion matrix. Section \ref{sec:6} gives the proof of Theorem \ref{theo:main}.

\section{Main assumptions and results}\label{sec:2}
In what follows,  we use the notations introduced above. Moreover, for fixed $ n \geq 1 , $ elements $x $ of $ \R^n $ shall be denoted by $ x = ( x_1 , \ldots , x_n  ) ,$ and $ \R^n $ will be endowed with the Euclidean norm denoted by $ \| x\| .$ Finally,  for matrices $ A \in \R^{n \times n } ,$ $ \| A \| $ denotes the associated operator norm.       

Our first main assumption is the following.

\begin{ass}\label{ass:1}
(i) $f_1$ and $f_2: \R \to  \R_+  $ are bounded analytic functions which are strictly lower bounded, i.e., $ f_1  (x) , f_2 (x) \geq \underbar f > 0 $ for all $x \in \R .$ Moreover, $f_1 $ and $f_2 $ are non decreasing.\\ 
(ii) There exists a finite constant $L $ such that
for every $x$ and $x' $ in $\R,$ 
\begin{equation}
\label{Lipsch-f}
|f_1 (x)  -f_1 (x') |   + |f_2 (x)  -f_2 (x')    |  \le L |x-x'|.
\end{equation}\\
(iii) The functions $ h_{12}, h_{21} $ are given by 
\begin{equation}\label{eq:erlang}
 h_{12 } (s) =  c_1 e^{ - \nu_1 s } \frac{ s^{n_1}}{n_1!}  ,  h_{21 } (s) =  c_2 e^{ - \nu_2 s } \frac{ s^{n_2}}{n_2!} ,
\end{equation}
where $n_1, n_2 \in \N_0 , c_1, c_2 \in \{-1,1\} $ and $ \nu_1 , \nu_2 > 0$ are fixed constants. 
\end{ass}

Under the above assumption, it is standard to show that the Hawkes process with the prescribed dynamics above exists.

\begin{prop}[Prop.1 of \cite{EvaSusanne}]
Under Assumption \ref{ass:1} there exists a path-wise unique Hawkes process \\$ (Z^{N}_{k,i} (t)_{ 1 \le k \le 2, 1 \le i \le N_k } )$ with intensity \eqref{eq:intensity}, for all $ t \geq 0.$  
\end{prop}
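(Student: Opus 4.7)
The plan is to realise the system as the unique strong solution of a system of stochastic differential equations driven by independent Poisson random measures, following the classical thinning construction for (nonlinear) Hawkes processes going back to Br\'emaud and Massouli\'e~\cite{bm}. Concretely, for each $(k,i)$ with $1 \le k \le 2$, $1 \le i \le N_k$, introduce independent Poisson random measures $\pi_{k,i}(ds,dz)$ on $]0,\infty[\,\times\,]0,\infty[$ with Lebesgue intensity, and rewrite the system as
$$
Z^N_{k,i}(t) \;=\; \int_{]0,t]}\!\int_0^\infty \one_{\{z \le \lambda^N_k(s-)\}} \, \pi_{k,i}(ds, dz), \qquad t \ge 0,
$$
with $\lambda^N_1, \lambda^N_2$ defined by \eqref{eq:intensity}--\eqref{eq:intensity2} from the counting processes themselves.

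The crucial preliminary observation is that by Assumption~\ref{ass:1}(i) the functions $f_1, f_2$ are bounded by some constant $\bar f$, so both intensities are dominated by $\bar f$. Consequently the total number of jumps on $[0,T]$ is stochastically dominated by a Poisson random variable with parameter $\bar f N T$ and in particular is almost surely finite; this rules out any explosion, which is the only serious issue that could arise from the mutual dependence of the two populations.

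For existence and pathwise uniqueness, I would then run a Picard iteration directly on the intensity processes. Setting $Z^{N,(0)}_{k,i} \equiv 0$ and defining $Z^{N,(n+1)}_{k,i}$ by the same thinning relation applied to $\lambda^{N,(n)}_k$, one uses the Lipschitz property~\eqref{Lipsch-f} of $f_1, f_2$ together with the boundedness of the Erlang kernels $h_{12}, h_{21}$ on compact intervals to estimate, for $t \le T$,
$$
\E\bigl|Z^{N,(n+1)}_{k,i}(t) - Z^{N,(n)}_{k,i}(t)\bigr| \;\le\; L \, \|h\|_{\infty,T} \int_0^t \E\bigl|X^{N,(n)}_{k'}(s) - X^{N,(n-1)}_{k'}(s)\bigr|\, ds,
$$
where $k' \neq k$. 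Iterating this cross-population bound and summing over particles yields a geometric contraction on any finite horizon $[0,T]$, whence the sequence is Cauchy in $L^1$ and converges to a solution. Applying the very same Gronwall-type estimate to the difference of any two solutions driven by the same $\pi_{k,i}$'s gives pathwise uniqueness.

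There is no genuine obstacle here: the argument is a straightforward adaptation of the Br\'emaud--Massouli\'e scheme to the two-population mean-field setting. The only mild care required is bookkeeping of the cross-dependence (population~$1$ depends on the past of population~$2$ and vice versa), but the boundedness of $f_1, f_2$ and the local boundedness of $h_{12}, h_{21}$ make the iteration decouple cleanly, and Assumption~\ref{ass:1} supplies all the integrability needed to close the estimates on arbitrary finite horizons.
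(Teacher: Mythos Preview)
The paper does not actually prove this proposition: it is stated as a quotation of Proposition~1 of \cite{EvaSusanne}, prefaced only by the remark that ``it is standard to show that the Hawkes process with the prescribed dynamics above exists.'' Your proposal supplies precisely that standard argument --- the Br\'emaud--Massouli\'e thinning representation together with a Picard iteration exploiting the boundedness of $f_1,f_2$ (for non-explosion) and their Lipschitz property (for the Gronwall contraction) --- which is the route taken in the cited reference. The sketch is correct and nothing more is needed here.
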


\subsection{An associated cascade of piecewise deterministic Markov processes (PDMP's)}\label{sec:erlang}
In the sequel we establish a link between the Hawkes process $ (Z^{N}_{k,i} (t)_{1 \le k \le 2, 1 \le i \le N_k} )$ -- which is of infinite memory -- and an associated system of Markov processes.  This relation exists thanks to the very specific structure of the memory kernels $h_{12}, h_{21}  $ in \eqref{eq:erlang}. Such kernels are called {\it Erlang memory} kernels;  they can describe delays in the transmission of information.  In \eqref{eq:erlang}, $n_1+1$ is the order of the delay, i.e., the number of differential equations needed for population $1$ to obtain a system without delay terms, and $ n_2 +1 $ is the order of delay for population $2.$ The delay of the influence e.g.\ of population $2$ on population $1$ is distributed and takes its maximum absolute value at $n_2/\nu_2$ time units back in time, and the mean is $(n_2+1)/\nu_2$ (if normalizing to a probability density). The higher the order of the delay, the more concentrated is the delay around its mean value, and in the limit of $n_2 \rightarrow \infty$ while keeping $(n_2+1)/\nu_2$ fixed, the delay converges to a discrete delay. The sign of $c_1$ and $c_2$ indicates if the influence is inhibitory or excitatory.

We introduce the family of adapted c\`adl\`ag processes 
\begin{equation}\label{eq:intensity21}
X^N_{1} (t) :=\frac{1}{N_{2}} \sum_{ j = 1 }^{N_{2}}  \int_{]0, t]} h_{12 } (  t- s ) d Z^N_{2, j } ( s)  = \int_{]0, t]} h_{12 } (t-s) d \bar Z^N_{2} ( s) 
\end{equation}
and 
\begin{equation}\label{eq:intensity22}
X^N_{2} (t) :=\frac{1}{N_{1}} \sum_{ j = 1 }^{N_{1}}  \int_{]0, t]} h_{21 } (  t- s ) d Z^N_{1, j } ( s)  = \int_{]0, t]} h_{21 } (t-s) d \bar Z^N_{1} ( s) ,
\end{equation}
where $ \bar Z^N_{k} ( s) = \frac{1}{N_{k}} \sum_{j=1}^{N_{k}} Z^N_{k, j } ( s) , k=1, 2  .$ Recalling \eqref{eq:intensity},  it is clear that the dynamics of the system is entirely determined by the dynamics of the processes $ X^N_{k } ( t- ) , t \geq 0 .$ Indeed, any particle belonging to the first  population jumps at rate $ f_1 (X^N_{1} (t-)), $ and any particle belonging to the second population at rate $ f_2 ( X^N_{2} (t-) ) .$     Without assuming the memory kernels to be Erlang kernels, the system $(X^N_{k } , 1 \le k \le 2  ) $ is not Markovian: For general memory kernels, Hawkes processes are truly infinite memory processes. 

When the kernels are Erlang, given by \eqref{eq:erlang},
taking formal derivatives in \eqref{eq:intensity21} and \eqref{eq:intensity22} with respect to time $t$ and introducing for any $ k =1,2$ and $ 1  \le l \le n_{k } +1 $  
\begin{equation}
 X^N_{k, l} ( t) :=  c_{k} \, \int_{]0, t]}   \frac{ (t-s)^{n_k- (l-1)}}{(n_k- (l-1))! }  e^{- \nu_{k} ( t-s)} d \bar Z^N_{k+1} ( s)  ,
\end{equation} 
where we identify population $ 2+1$ with population $1$, we obtain the following system of stochastic differential equations driven by Poisson random measure.

\begin{equation}\label{eq:cascadepdmp}
\left\{ 
\begin{array}{lcl}
d X^N_{k, l } (t) &=& [ -  \nu_{k} X^N_{k, l  } ( t) + X^N_{k, l+1} (t) ] dt , \; 1 \le l \le n_k ,  \\
d X^N_{k, n_k+1} (t) &=& -  \nu_{k} X^N_{k,  n_k+1} (t) dt + c_{k} d \bar Z^N_{k+1} (t) ,
\end{array}
\right. 
\end{equation}
$k=1, 2.$ 

Here, $X^N_{k }$ is identified with $X^N_{k, 1 }, $ and each $Z^N_{k, j } $ jumps at rate $f_{k} (  X^N_{k, 1 } (t- ) ) .$ We call the system \eqref{eq:cascadepdmp} a {\em cascade of memory terms}. 
Thus, the dynamics of the Hawkes process $ (Z^N_{k, i } (t))_{  1 \le k \le  2, 1 \le i \le N_k}$ is entirely determined by the PDMP $ ( X^N_{k, l })_{(1\le k \le 2, 1 \le l \le n_k +1 )}  $ of dimension $n := n_1 + n_2 + 2 .$ 
 
\subsection{A diffusion approximation in the large population regime}
In the large population limit, i.e.\ when $ N \to \infty , $ it is natural to consider the diffusion process approximating the above cascade of PDMP's. This diffusion approximation is given by 

\begin{equation}\label{eq:cascadeapprox}
\left\{ 
\begin{array}{lcl}
d Y^N_{k, l } (t) &=& [ -  \nu_{k} Y^N_{k, l  } ( t) + Y^N_{k, l +1} (t) ] dt , \;  \quad \quad 1 \le l \le n_k ,  \\
d Y^N_{k,  n_k +1} (t) &=& -  \nu_{k} Y^N_{k,  n_k+1} (t) dt + c_{k} f_{k+1} (  Y^N_{ k+1, 1} (t) ) dt  +  c_{k} \frac{\sqrt{f_{k+1} (  Y^N_{ k+1, 1} )(t)  }}{ \sqrt{ N_{k+1}}} d B^{ k+1}_t ,
\end{array}
\right .
\end{equation}

$k=1, 2,$ where population $2+1$ is identified with $1$ and where $B^{1} , B^2 $ are independent standard Brownian motions (compare to Theorem 4 of \cite{EvaSusanne}). The diffusion $Y^N = (Y^N_t)_{t \in \R_+} $ takes values in $ \R^n $ with $ n = n_1 + n_2 +2 .$ 

By Theorem 4 of \cite{EvaSusanne}, we know that $Y^N$ is a good approximation of the PDMP $X^N $ since the weak approximation error can be controlled by 
$ \sup_x | E_x ( \varphi ( X^N (t) ))-   E_x ( \varphi ( Y^N (t) )) | \le C ( \varphi)  T N^{-2 } ,$
for all $ t \le T , $ for sufficiently smooth test functions $ \varphi .$ We therefore concentrate on the study of this diffusion process $Y^N.$  

We write $A^N$ for the infinitesimal generator of the process \eqref{eq:cascadeapprox}. Moreover, we denote by $ Q_x^{N}$ the law of the solution $ (Y^N(t), t \geq 0) $ of  \eqref{eq:cascadeapprox}, starting from $Y^N (0) = x ,$ for some $x \in \R^n , $ and by $ E_x^N$ the corresponding expectation. 

We study the above diffusion when  $ N_1 , N_2 \to \infty $ such that $ N_1/N =: p_1 $ and $ N_2/N =: p_2$ remain constant.  Re-numbering the coordinates of $ Y^N $ as $ (Y^N_1, \ldots, Y^N_n ), $ where $ n = n_1 + n+2 + 2, $ we may introduce 
\begin{equation}\label{eq:drift}
b(x) := \left( 
\begin{array}{c} 
- \nu_1 x_{ 1 } + x_{2 } \\
- \nu_1 x_{2} + x_{3} \\
\vdots\\
- \nu_1 x_{n_1 +1 } + c_1 f_2 ( x_{n_1+2} ) \\
- \nu_2 x_{n_1+2 } + x_{n_1+3 } \\
\vdots \\
- \nu_2 x_{n} + c_2 f_1 ( x_{1} ) 
\end{array}
\right) , \; 
\sigma (x) := \left( 
\begin{array}{cc}
0&0\\
\vdots & \vdots \\
0 & \frac{c_1}{\sqrt{p_2}} \sqrt{ f_2 ( x_{n_1+2 } ) } \\
0 & 0 \\
\vdots & \vdots \\
\frac{c_2}{\sqrt{ p_1} } \sqrt{ f_1 ( x_{1 } ) } & 0 
\end{array}
\right) ,
\end{equation}
which are the drift vector of \eqref{eq:cascadeapprox} and the associated diffusion matrix which is an $ n \times 2 -$ matrix. Notice that $ \sigma $ is highly degenerate; there is a two-dimensional Brownian motion driving an $n-$dimensional system. We may rewrite \eqref{eq:cascadeapprox} as 

\begin{equation}\label{eq:diffusionsmallnoise}
 d Y_t^N   = b ( Y_t^N  ) dt + \frac{1}{ \sqrt{N}} \sigma ( Y_t^N  ) d B_t , 
\end{equation} 
with $ B_t = ( B^1_t , B^2_t ) .$  
 
The aim of this paper is to study this diffusion $Y^N $ and its long time behavior in the large population limit (i.e.\ as the noise term tends to $0$). We will show that this diffusion presents oscillations in the long run and we will study the large population limit of the associated invariant measure. This will be done relying on the Freidlin-Wentzell theory (see \cite{FW} and \cite{DZ}) on sample path large deviations for diffusion processes which has been extended recently to the case of (some) degenerate diffusions in Rey-Bellet and Thomas \cite{reybellet}.   

We start with a discussion of the deterministic limit system associated to \eqref{eq:diffusionsmallnoise}. 

\subsection{Monotone cyclic feedback systems}
Consider the solution of 
\begin{equation}\label{eq:limit}
\dot x (t) = b (x (t) )  , 
\end{equation}
i.e.\ of 
\begin{eqnarray}\label{eq:cascade}
\frac{d x_{i} (t) }{dt} &=& - \nu_1 x_i  (t)  +  x_{i+1} (t)  , \, \, 1 \le i  \le  n_1   , \; 
\frac{d x_{n_1 +1 } (t) }{dt} = - \nu_1 x_{n_1 + 1 } (t)   + c_1  f_{2} ( x_{n_1+2 } (t)) , \nonumber \\
\frac{d x_{i} (t) }{dt} &=& - \nu_2 x_{i} (t) +  x_{i+1} (t) , \, \, n_1+2  \le i  < n    , \; \frac{d x_{n } (t) }{dt} = - \nu_2 x_{n } (t)  + c_2  f_{1} ( x_{1 } (t)) . 
\end{eqnarray}
This system is a monotone cyclic feedback system as considered e.g.\ in \cite{malletparet-smith} or as  in (33) and (34) of \cite{michel}. If $c_1 c_2  > 0, $ then the system \eqref{eq:cascade} is of total positive feedback, otherwise it is of negative feedback. It can be shown easily (see Prop.\ 5 of \cite{EvaSusanne}) that \eqref{eq:cascade} admits a unique equilibrium $x^*$ if  $ c_1c_2  < 0 .$ 

We now present special cases where system \eqref{eq:cascade} is necessarily attracted to non-equilibrium periodic orbits. Recall that  $ n =  n_1+ n_2  +2$  is the dimension of \eqref{eq:cascade}.  

The following theorem is based on Theorem 4.3 of \cite{malletparet-smith} and generalizes the result obtained in Theorem 6.3 of \cite{michel}. We quote it from \cite{EvaSusanne}. 

\begin{theo}\label{theo:orbit}[Theorem 3 of \cite{EvaSusanne}]

Grant Assumption \ref{ass:1}.  Put $\varrho := c_1 c_2  f_1' (x^*_{
1})  f_2' (x^*_{
n_1+2 })  $ and suppose that $ \varrho < 0.$ Consider all solutions $\lambda
$ of
\begin{equation}\label{eq:racinesunite}
(\nu_1 + \lambda)^{n_1 +1} \cdot (\nu_2 + \lambda)^{n_2
+1} = \varrho
\end{equation}
and suppose that there exist at least two solutions $\lambda$ of
\eqref{eq:racinesunite} such that
\begin{equation}\label{eq:unstable}
\mbox{Re } ( \lambda ) > 0.
\end{equation}
Then
$x^* $ is linearly unstable, and the system \eqref{eq:cascade}
possesses at least one, but no more than a finite number of non constant periodic
orbits. Any $ \omega-$limit set is either the equilibrium $ x^* $ or one of these periodic orbits. At least one of the periodic orbits  is orbitally asymptotically stable. 
\end{theo}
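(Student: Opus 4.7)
The overall strategy is to recognize that the system \eqref{eq:cascade} falls exactly into the class of monotone cyclic feedback systems analyzed by Mallet-Paret and Smith, and then to apply their generalized Poincar\'e--Bendixson theorem together with a local spectral analysis around $x^*$. Since the theorem is quoted from \cite{EvaSusanne}, one may just cite; here I sketch how the proof should be assembled from scratch.

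First, I would verify the monotone cyclic feedback structure. Writing the coordinates in the cyclic order $(x_1,\ldots,x_{n_1+1},x_{n_1+2},\ldots,x_n)$, each $\dot x_i$ depends only on $x_i$ and $x_{i+1}$ (indices cyclic), with the self-coupling $-\nu_k$ strictly negative and the forward coupling either the constant $1$ or $c_k f_{k+1}'$; since $f_1,f_2$ are non-decreasing, these are signed monotone couplings, and the product of their signs around the cycle is $\mathrm{sgn}(c_1 c_2)$. The hypothesis $\varrho<0$ together with $f_1',f_2'\ge 0$ forces $c_1 c_2<0$, i.e.\ the system is of \emph{negative} cyclic feedback.

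Second, I would establish linear instability of $x^*$. The Jacobian $J = Db(x^*)$ is cyclic bidiagonal; a direct expansion along the last row (or a block-Schur complement) gives the characteristic polynomial
\begin{equation*}
\det(\lambda I - J) \;=\; (\lambda+\nu_1)^{n_1+1}(\lambda+\nu_2)^{n_2+1} - c_1 c_2 f_1'(x_1^*) f_2'(x_{n_1+2}^*),
\end{equation*}
so the spectrum of $J$ is exactly the set of roots of \eqref{eq:racinesunite}. The hypothesis that at least two such roots satisfy $\mathrm{Re}(\lambda)>0$ then gives linear instability and fixes the dimension of the unstable manifold at $x^*$ to be at least two.

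Third, I would invoke the Poincar\'e--Bendixson theorem for monotone cyclic feedback systems (Theorem 4.3 of \cite{malletparet-smith}): every bounded $\omega$-limit set of \eqref{eq:cascade} is either an equilibrium, a non-constant periodic orbit, or a homoclinic/heteroclinic structure joining equilibria. Boundedness of all trajectories follows from boundedness of $f_1,f_2$: an easy Gronwall argument on the cascade, starting from the last (interaction) coordinate and propagating backwards along the chain $-\nu_k x_{k,l}+x_{k,l+1}$, produces a compact forward-invariant absorbing set. Uniqueness of the equilibrium $x^*$ (which holds for $c_1 c_2<0$, as recalled in the text) rules out heteroclinic loops, and the instability of $x^*$ combined with the fact that its stable manifold is of codimension $\ge 2$ rules out a homoclinic orbit (a homoclinic loop in an MCFS requires a one-dimensional stable or unstable direction at the equilibrium). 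Hence every $\omega$-limit set is either $\{x^*\}$ or a non-constant periodic orbit.

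Finally, existence and finiteness of periodic orbits, and existence of a stable one: because trajectories starting near $x^*$ along an unstable direction are bounded yet cannot return to $x^*$, their $\omega$-limit set must be a non-constant periodic orbit, giving existence. Finiteness is the delicate point and uses the \emph{analyticity} of $f_1,f_2$ (part (i) of Assumption \ref{ass:1}): an MCFS defined by a real-analytic vector field cannot possess an accumulating family of limit cycles inside a compact set, since one could otherwise extract a non-trivial analytic curve of periodic orbits contradicting the hyperbolicity dichotomy available for MCFS (as developed in \cite{malletparet-smith}). Once one has a finite list $\Gamma_1,\ldots,\Gamma_M$ of periodic orbits, a standard index/attraction argument — using that the basin of $\{x^*\}$ is of codimension $\ge 2$ and thus has Lebesgue measure zero locally, while almost every trajectory is attracted to some $\Gamma_i$ — forces at least one of the $\Gamma_i$ to be orbitally asymptotically stable. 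The main obstacle, in my view, is this last step: proving the finiteness of cycles and extracting a stable one cleanly requires the full machinery of \cite{malletparet-smith} (their discrete Lyapunov/zero-crossing functional and the resulting Floquet-type analysis), which is why the cleanest exposition is simply to cite Theorem 4.3 of \cite{malletparet-smith} after checking the three structural hypotheses above.
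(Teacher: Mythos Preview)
The paper does not supply a proof of this theorem at all: it is quoted verbatim as Theorem~3 of \cite{EvaSusanne}, with the remark that the argument there is based on Theorem~4.3 of \cite{malletparet-smith} and generalizes Theorem~6.3 of \cite{michel}. You anticipated this correctly in your opening sentence, and your sketch is a faithful reconstruction of how the proof in \cite{EvaSusanne} is assembled: the verification of the monotone cyclic feedback structure with negative feedback, the characteristic-polynomial computation (your formula for $\det(\lambda I - J)$ is correct), boundedness of orbits from the boundedness of $f_1,f_2$, and the invocation of the Poincar\'e--Bendixson theorem for MCFS.

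One small caution: your heuristic for ruling out a homoclinic loop (``a homoclinic loop in an MCFS requires a one-dimensional stable or unstable direction at the equilibrium'') is not quite the mechanism used; the actual exclusion goes through the discrete Lyapunov (zero-crossing) functional of \cite{malletparet-smith} and the structure of the invariant sets it forces. Likewise, your index/measure argument for the existence of a stable periodic orbit is plausible but not the route taken. As you yourself conclude, the cleanest and correct disposition is exactly what the paper does: check the structural hypotheses and cite \cite{malletparet-smith} and \cite{EvaSusanne}.
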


Notice that $\varrho < 0 $ implies that $ c_1 c_2 < 0, $ that is, we are in the frame of a total negative feedback. In the sequel, we shall always assume that the assumptions of Theorem \ref{theo:orbit} are satisfied and we introduce 

\begin{ass}\label{ass:4}
We suppose that  $\rho := c_1 c_2  f_1' (x^*_{
1})  f_2' (x^*_{
n_1+2 }) $ satisfies that $ \rho < 0$ and that there exist at least two solutions $\lambda$ of
\eqref{eq:racinesunite} with $
\mbox{Re } ( \lambda ) > 0.$ 
\end{ass}
Under Assumption \ref{ass:4}, there exists a finite number of periodic orbits, and we write $ K_1 = \{ x^* \} $ for the unstable equilibrium point and  $K_2 , \ldots , K_L $ for the periodic orbits of the limit system \eqref{eq:cascade}. Moreover, we write $K =  \bigcup_{l=1}^L K_l $ and 
\begin{equation}\label{eq:b}
 B_\varepsilon (K) = \{ x \in \R^n : dist ( x, K) < \varepsilon \} ,
\end{equation} 
where $dist ( x, K) = \inf \{ \| x - y \| ,  y \in K \} $ and where $ \| \cdot \| $ is the Euclidean norm on $\R^n.$ 

Due to the presence of noise, the diffusion $Y^N$ will be able to switch from the vicinity of one periodic orbit to the vicinity of another orbit. However, as $N \to \infty, $ the diffusion will stay within tubes around periodic orbits during longer and longer periods, before eventually leaving such a tube after a time which is of order $ e^{ N \bar V },$ where $\bar V$ is related to the cost of steering the process from the orbit to the boundary of the tube. This behavior can be read on the invariant measure of $Y^N, $ 
and the main result of this paper is to show that the invariant measure of the diffusion will concentrate around the stable periodic orbits of \eqref{eq:cascade}  as $ N \to \infty .$

\section{Main results : Large deviations for the diffusion approximation $Y^N$ }\label{sec:3}
We start with some preliminary results on the diffusion process $Y^N .$ 

\begin{theo}\label{theo:harris}
Grant Assumption \ref{ass:1}. Then $Y^N$ is positive Harris recurrent with unique invariant probability measure $ \mu^N .$ The invariant measure $ \mu^N $ is of full support.  
\end{theo}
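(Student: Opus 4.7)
The plan is to verify three classical ingredients that together yield positive Harris recurrence of a possibly degenerate diffusion: existence of a smooth, everywhere positive transition density $p^N_t(x,y)$, topological irreducibility via a control argument, and a Foster–Lyapunov drift condition ensuring returns to compacts in finite mean time. Full support of $\mu^N$ will then be a free consequence of the positivity of the density.

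For the smooth density I would apply H\"ormander's theorem to \eqref{eq:cascadeapprox}. The coefficients are $C^\infty$ and, as observed in the introduction, the weak H\"ormander condition holds at every point: the two columns of $\sigma$ enter only the terminal coordinates $Y^N_{k,n_k+1}$, but iterated brackets with $b$ successively generate $\partial_{y_{k,n_k}}, \partial_{y_{k,n_k-1}}, \ldots, \partial_{y_{k,1}}$ through the linear terms $+ Y^N_{k,l+1}$ of the cascade, and further brackets involving the analytic nonlinearities $f_1, f_2$ couple the two populations. Hence $p^N_t(x,y) \in C^\infty$ on $(0,\infty) \times \R^n \times \R^n$. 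Topological irreducibility then follows from the Stroock–Varadhan support theorem combined with the strong complete controllability of the deterministic control system proved in Section~\ref{sec:5}: for every $x,y \in \R^n$ there exist $t > 0$ and a control steering $x$ to $y$, so that $y$ lies in the topological support of $Q^N_x(Y^N_t \in \cdot)$. Combined with continuity of $p^N_t$, this forces $p^N_t(x,y) > 0$ for every $t > 0$ and every $x,y \in \R^n$.

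For the drift condition I would take $V(x) = x^{T} P x$, where $P \succ 0$ is the unique solution of the Lyapunov equation $A^{T} P + P A = - I$; here $A$ is the linear part of $b$, a block lower-triangular matrix whose only eigenvalues are $-\nu_1$ and $-\nu_2$ with multiplicities $n_1 + 1$ and $n_2 + 1$, both strictly negative, so such $P$ exists. Writing $b(x) = A x + g(x)$, Assumption~\ref{ass:1}(i) gives $\| g \|_\infty < \infty$, and the diffusion matrix $\sigma \sigma^{T}$ is uniformly bounded by $(\|f_1\|_\infty + \|f_2\|_\infty)/\min(N_1,N_2)$. A direct computation then yields
\begin{equation*}
A^N V(x) \leq -\tfrac{1}{2} \|x\|^2 + C_N
\end{equation*}
for a finite constant $C_N$. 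Combined with the everywhere positive smooth density above, the Meyn–Tweedie criteria in the form used by \cite{reybellet} imply that $Y^N$ is positive Harris recurrent with a unique invariant probability measure $\mu^N$, and $\mu^N(U) = \iint_U p^N_1(x,y)\,dy\,\mu^N(dx) > 0$ for every non-empty open $U$ gives full support.

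The main obstacle is really unwinding the cascade indexing to carry out the H\"ormander bracket computation cleanly across both populations and checking it everywhere (in particular at points where $f_k'$ could vanish, which by analyticity of $f_k$ form at most a nowhere dense set where higher-order brackets involving $f_k^{(m)}$ restore the missing directions). Once that is done the Lyapunov part is essentially automatic, because the nonlinearities are uniformly bounded and the linear part of $b$ is stable.
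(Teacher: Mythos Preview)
Your proposal is correct and follows essentially the same three-ingredient strategy as the paper (weak H\"ormander condition, Stroock--Varadhan support theorem combined with strong complete controllability, and a Foster--Lyapunov drift); the paper simply cites the Lyapunov function from \cite{EvaSusanne} via Proposition~\ref{prop:lyapunov} and then invokes Theorem~1 of \cite{hh3}, whereas you construct $V=x^{T}Px$ explicitly and appeal to Meyn--Tweedie, which works for the same reason since the nonlinear part of $b$ is bounded. One remark: your concern about zeros of $f_k'$ is unnecessary, because the iterated brackets of $\sigma^1$ with $b$ already generate the directions $e_n,e_{n-1},\ldots,e_{n_1+2}$ and those of $\sigma^2$ with $b$ generate $e_{n_1+1},e_{n_1},\ldots,e_1$ through the linear cascade terms $+x_{i+1}$ alone, so the two blocks together span $\R^n$ at every point without ever passing through $f_k'$---only the lower bound $f_k\ge\underbar f>0$ is used.
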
 

The proof of this result will be given in Section \ref{sec:control} below. It is based on two main ingredients. The first ingredient is the existence of a Lyapunov function, a result that has been obtained in \cite{EvaSusanne} and that we quote from there. In order to state this result, introduce
$\tau_\varepsilon = \inf \{ t \geq 0 : Y_t^N \in B_\varepsilon (K) \} ,$ where $ B_\varepsilon (K) $ has been defined in \eqref{eq:b}. 
 
\begin{prop}[Prop.\ 5 and Theorem 5 of \cite{EvaSusanne}]\label{prop:lyapunov}
There exists a function $ G : \R^n \to \R_+ $ not depending on $N,$ such that $ \lim_{ |x | \to \infty } G( x) = \infty ,$ and constants $a ,b > 0 $ not depending on $N$ such that $ A^N G \le - a G + b .$
Moreover, we also have
$$ E^N_x \tau_\varepsilon \le c G(x)  ,$$
for some constant $c > 0$ not depending on $N.$ 
\end{prop}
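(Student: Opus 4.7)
The key structural observation is that, under Assumption \ref{ass:1}, the drift vector splits as $b(x) = \Lambda x + \Phi(x)$, where $\Lambda \in \R^{n\times n}$ collects the linear cascade terms and $\Phi(x)$ is supported on only two coordinates (positions $n_1+1$ and $n$), with entries $c_1 f_2(x_{n_1+2})$ and $c_2 f_1(x_1)$. With the ordering of coordinates used in \eqref{eq:drift}, $\Lambda$ is upper triangular with diagonal $(-\nu_1, \ldots, -\nu_1, -\nu_2, \ldots, -\nu_2)$, hence Hurwitz. Since $f_1, f_2$ are bounded by Assumption \ref{ass:1}(i), both $\|\Phi\|_\infty$ and $\|\sigma\|_\infty$ are finite; this is exactly what will make all estimates uniform in $N \ge 1$.

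Let $P$ be the unique symmetric positive definite solution of the Lyapunov equation $\Lambda^{\mathrm T} P + P\,\Lambda = -I$, and set $G(x) := 1 + x^{\mathrm T} P x$, which is smooth, coercive, strictly positive, and independent of $N$. A direct computation gives
\[
A^N G(x) \;=\; x^{\mathrm T}(\Lambda^{\mathrm T}P + P\Lambda)\,x \;+\; 2\, x^{\mathrm T} P\,\Phi(x) \;+\; \frac{1}{N}\,\mathrm{tr}\!\bigl(\sigma(x)^{\mathrm T} P\,\sigma(x)\bigr) \;=\; -\|x\|^2 + 2\,x^{\mathrm T} P\,\Phi(x) + \frac{1}{N}\,\mathrm{tr}\!\bigl(\sigma^{\mathrm T} P\,\sigma\bigr).
\]
Young's inequality bounds the cross term by $\tfrac12\|x\|^2 + 2\|P\|^2\|\Phi\|_\infty^2$, and the trace term is globally bounded by some $C_\sigma$. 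Hence $A^N G(x) \le -\tfrac12 \|x\|^2 + b_0$ uniformly in $N \ge 1$. Since $G(x) \le 1 + \|P\|\,\|x\|^2$, this translates into $A^N G \le -a G + b$ with explicit $a,b>0$ independent of $N$.

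For the hitting-time bound, fix $R$ large enough that $-aG + b \le -\tfrac{a}{2} G$ on $\{G > R\}$ and set $\tau_R := \inf\{t: G(Y^N_t) \le R\}$. Dynkin's formula yields $E_x^N \tau_R \le 2G(x)/(aR)$. The sublevel set $D_R := \{G \le R\}$ is compact but is \emph{not} a priori contained in $B_\varepsilon(K)$, so one has to bound the residual time to enter $B_\varepsilon(K)$ starting from $D_R$. For this, use Theorem \ref{theo:orbit} to exhibit a uniform time $T<\infty$ after which every deterministic orbit of \eqref{eq:cascade} starting in $D_R$ has entered $B_{\varepsilon/2}(K)$; then invoke a Stroock--Varadhan type support theorem, applicable because the system is hypo-elliptic and strongly completely controllable (to be established in Section \ref{sec:5}), to obtain a uniform lower bound $p_0 > 0$ on $Q^N_y\{Y^N_T \in B_\varepsilon(K)\}$ over $y \in D_R$. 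The $N$-dependence is benign since larger $N$ only concentrates the diffusion further onto the deterministic trajectory. Iterating via the strong Markov property, $\sup_{y \in D_R} E_y^N \tau_\varepsilon \le T/p_0$, and combining with the previous bound and $G \ge 1$ gives $E_x^N \tau_\varepsilon \le c\, G(x)$.

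The main obstacle is the uniform lower bound $p_0$: ellipticity is unavailable, so one must genuinely exploit the hypo-elliptic cascade structure. Compared with classical Freidlin--Wentzell arguments, the nonstandard feature is the extreme degeneracy of $\sigma$ (a two-dimensional Brownian motion driving an $n$-dimensional system), which forces a careful construction of controls that propagate through the integrator chains of the two populations, in the spirit of Rey-Bellet--Thomas \cite{reybellet} and Delarue--Menozzi \cite{delaruemenozzi}.
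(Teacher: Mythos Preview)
The paper does not prove Proposition~\ref{prop:lyapunov} at all; it is quoted verbatim from \cite{EvaSusanne} (Prop.~5 and Theorem~5 there), so there is no in-paper argument to compare against. Your task is therefore to supply a self-contained proof, and your Lyapunov part does this cleanly: the decomposition $b=\Lambda x+\Phi(x)$ with $\Lambda$ Hurwitz and $\Phi,\sigma$ bounded, the choice $G(x)=1+x^{\mathrm T}Px$ with $\Lambda^{\mathrm T}P+P\Lambda=-I$, and the resulting drift inequality $A^N G\le -aG+b$ uniformly in $N$ are all correct and standard.

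The hitting-time half, however, has two soft spots that you should tighten. First, the ``uniform time $T$'' for deterministic orbits from the compact set $D_R$ to enter $B_{\varepsilon/2}(K)$ is not a direct consequence of Theorem~\ref{theo:orbit}, which only describes individual $\omega$-limit sets. You need the extra compactness step: for each $y\in D_R$ pick $T_y$ with $x^y(T_y)\in B_{\varepsilon/4}(K)$, use continuity of the flow at time $T_y$ to get an open $U_y$ with $x^z(T_y)\in B_{\varepsilon/2}(K)$ for $z\in U_y$, extract a finite subcover, and set $T=\max_i T_{y_i}$; then every orbit from $D_R$ visits $B_{\varepsilon/2}(K)$ at some time $\le T$. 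Second, your uniform lower bound $p_0$ on $Q^N_y(\tau_\varepsilon\le T)$ is asserted via the support theorem, but the support theorem only yields positivity for \emph{fixed} $N$, not uniformity as $N\to\infty$. The honest argument splits into two regimes: for large $N$, a Gronwall/BDG estimate gives $\sup_{y\in D_R}Q^N_y(\sup_{t\le T}\|Y^N_t-x^y(t)\|>\varepsilon/2)\to 0$, which forces $Q^N_y(\tau_\varepsilon\le T)\ge 1/2$ for $N\ge N_0$; for the finitely many $N<N_0$, hypoellipticity plus the Feller property plus compactness of $D_R$ give a positive minimum for each $N$ separately. Your sentence ``larger $N$ only concentrates the diffusion further onto the deterministic trajectory'' is the right intuition for the first regime, but it is not yet a proof.

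There is no circularity in invoking the controllability of Section~\ref{sec:5}: Theorem~\ref{theo:control} is proved from the cascade structure alone and does not use Proposition~\ref{prop:lyapunov}.
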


The second main ingredient to prove the Harris recurrence is the following:  Despite the fact that $Y^N$ is highly degenerate, the weak H\"ormander condition is satisfied on the whole state space, as it has been shown in Proposition 7 of \cite{EvaSusanne}. 

Once the Harris recurrence of the process is proven, we turn to the large deviation properties of $Y^N.$ We firstly introduce the cost functional related to the control problem of the diffusion $Y^N.$ For that sake, for some time horizon $t_1<\infty$ which is arbitrary but fixed, write $\,\tt H\,$ for the Cameron-Martin space of measurable functions ${h}:[0,t_1]\to \R^2 $ having absolutely continuous components ${ h}^\ell(t) = \int_0^t \dot h^\ell(s) ds$ with $\int_0^{t_1}[{\dot h}^\ell]^2(s) ds < \infty$, $1\le \ell\le 2$. For $ h \in \,\tt H\, , $ we put $ \| \dot h\|_\infty := \| \dot h^1 \|_\infty + \| \dot h ^2 \|_\infty .$  For $x\in \R^n $ and ${ h}\in{\tt H}$, consider the deterministic system 

\begin{equation}\label{eq:generalcontrolsystem}
\varphi = \varphi^{( {h}, x)} \; \mbox{solution to}\; d \varphi (t) = b (  \varphi (t) ) dt +  \sigma( \varphi (t) ) \dot h(t) dt, \; \mbox{with $\varphi (0)=x,$}  
\end{equation}

on $[0, t_1 ].$  As in Dembo and Zeitouni \cite{DZ}, we introduce the rate function $ I_{x, t_1} (f) $ on $ C ( [0, t_1 ], \R^n ) $ by 
\begin{equation}\label{eq:rate}
I_{x, t_1} (f) = \inf_{ {h} \in {\tt H} : \varphi^{( { h}, x)}  (t) = f(t) , \; \forall t \le t_1 } \frac12 \int_0^{t_1} [| \dot{ h}^1(s)|^2+ | \dot{ h}^2 (s) |^2] ds  ,
\end{equation}
where $ \inf \emptyset = + \infty . $ Notice that the above rate function is not explicit since the diffusion matrix $\sigma $ is degenerate. 

We then introduce the cost function $ V_t (x, y )$ which is given by 
$$ V_t ( x, y) = \inf_{ { h } \in {\tt H} : \varphi^{ ( {h} , x)} (t) = y } \frac12 \int_0^t [| \dot{ h}^1(s)|^2+ | \dot{ h}^2 (s) |^2] ds , \;  V( x, y ) = \inf_{t > 0 } V_t ( x,y).$$
Finally, for any two sets $ B, C \in {\mathcal B} ( \R^n ) $ we define
$$ V( B, C ) = \inf_{x \in B, y \in C } V( x, y ) .$$ 
As in Freidlin and Wentzell \cite{FW} we say that two points $x$ and $ y $ are equivalent and we write $ x \sim y , $ if and only if $V( x, y ) = V(y, x ) = 0 . $ Notice that the $\omega -$limit set $K= \{ x^*\}  \cup \bigcup_{l=2}^L K_l $ consists of $L$ such equivalence classes with respect to this equivalence relation. In \cite{FW}, Chapter 6.3, Freidlin and Wentzell introduce graphs on the set $ \{1 , \ldots , L \} $ in the following way. For any fixed $ i \le L , $ an $ \{i \}-$graph is a set consisting of arrows $ m \to n $ where all starting points $m$ of such an arrow are $\neq i ,$  such that every $ m \neq i $ is the initial point of exactly one arrow and such that there are no closed cycles in the graph. Intuitively, such an $\{i\}-$graph describes the possible ways of going from some $ K_m , $ $m \neq i , $ to $K_i, $ following a path $K_m =: K_{m_1}  \rightarrow K_{m_2} \rightarrow K_{m_3} \rightarrow \ldots \rightarrow K_i, $ without hitting one of the sets $K_{m_j} $ twice. Therefore, $ \{i\}-$graphs describe all possible ways of passages between the sets $ K_j ,$ ending up in $K_i.$ An alternative description of such passages is given by means of the hierarchy of $L-$cycles, as pointed out in Freidlin and Wentzell \cite{FW}, Chapter 6.6. Writing $ G \{ i \} $ for the set of all possible $ \{i\}-$graphs, we then introduce 
\begin{equation}
W( K_i ) = \min_{g \in G\{i\} } \sum_{m \to n \in g } V( K_m, K_n ) ,
\end{equation}
which is the minimal cost of going from any $ K_j $ to $K_i $ for some $ j \neq i .$ 

The following theorem is our main result.  

\begin{theo}\label{theo:main}
Grant Assumptions \ref{ass:1} and \ref{ass:4}. Then for any open set $ D$ with compact closure and smooth boundary satisfying $ dist ( D, K ) > 0, $ we have 
\begin{equation}\label{eq:main}
 \lim_{N \to \infty } \frac1N \log \mu^N (D) = - \inf_{ x \in D} W(x) ,
\end{equation}
where 
\begin{equation}\label{eq:W}
 W(x) = \min_i \left( W( K_i ) + V ( K_i , x) \right) - \min_j W( K_j ) .
\end{equation}
\end{theo}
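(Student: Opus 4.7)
The approach is the standard Freidlin--Wentzell invariant-measure argument (Chapter 6 of \cite{FW}), adapted to the degenerate setting as in Rey-Bellet and Thomas \cite{reybellet}, where the classical elliptic sample-path LDP is replaced by the one available through the control theory developed in Section \ref{sec:5}. Fix $\rho>0$ small enough that the balls $g_i:=B_\rho(K_i)$ and $G_i:=B_{2\rho}(K_i)$, $i=1,\ldots,L$, are pairwise disjoint and that $\bar D\cap \overline{\bigcup_i G_i}=\emptyset$. Introduce the alternating stopping times $\tau_0:=\inf\{t\ge 0:Y^N_t\in\bigcup_i g_i\}$ and, for $n\ge 0$, $\sigma_n:=\inf\{t\ge\tau_n:Y^N_t\notin\bigcup_i G_i\}$, $\tau_{n+1}:=\inf\{t\ge\sigma_n:Y^N_t\in\bigcup_i g_i\}$. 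Theorem \ref{theo:harris} and Proposition \ref{prop:lyapunov} guarantee that these times are a.s.\ finite and that the embedded chain $Z_n:=Y^N_{\tau_n}$, living on $\bigcup_i\partial g_i$, is positive recurrent with unique invariant probability measure $\nu^N$.

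The central estimate is the two-sided one-step transition LDP for $(Z_n)$:
\[
\lim_{\rho\downarrow 0}\lim_{N\to\infty}\frac1N\log\sup_{x\in\partial g_i}P^N_x(Z_1\in\partial g_j)=\lim_{\rho\downarrow 0}\lim_{N\to\infty}\frac1N\log\inf_{x\in\partial g_i}P^N_x(Z_1\in\partial g_j)=-V(K_i,K_j).
\]
This is obtained from a finite-horizon sample-path LDP for $Y^N$, combined with the strong complete controllability and the continuity of $V$ in its arguments announced in Section \ref{sec:5}; the Lyapunov estimate of Proposition \ref{prop:lyapunov} is used to rule out pathologically long excursions between successive $\tau_n$. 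With these transition estimates in hand, the graph-theoretic formula (Lemma 3.1 of Chapter 6 in \cite{FW}) applied to a finite-state coarsening of $(Z_n)$ yields
\[
\lim_{\rho\downarrow 0}\lim_{N\to\infty}\frac1N\log\nu^N(\partial g_i)=-\bigl(W(K_i)-\min_j W(K_j)\bigr).
\]

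To transfer the estimate from $\nu^N$ to $\mu^N$, I would use the classical renewal identity
\[
\mu^N(D)=\frac{\sum_i\int_{\partial g_i}\nu^N(dx)\,E^N_x\!\int_0^{\tau_1}\one_D(Y^N_s)\,ds}{\sum_i\int_{\partial g_i}\nu^N(dx)\,E^N_x[\tau_1]}.
\]
The denominator is bounded above and below by at most polynomial factors in $N$, by combining Proposition \ref{prop:lyapunov} (control of the hitting time of $\bigcup_i g_i$ from outside $\bigcup_i G_i$) with an elementary lower bound on $\sigma_0-\tau_0$ coming from the Lipschitz bound on $b$. For the numerator, a further use of the sample-path LDP together with the continuity of $V$ gives
\[
\lim_{\rho\downarrow 0}\lim_{N\to\infty}\frac1N\log E^N_x\!\int_0^{\tau_1}\one_D(Y^N_s)\,ds=-V(K_i,D),\quad x\in\partial g_i,
\]
with $V(K_i,D)=\inf_{y\in D}V(K_i,y)$. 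Combining the two estimates and using $\min_i\inf_{x\in D}=\inf_{x\in D}\min_i$ yields precisely \eqref{eq:main} with $W$ as in \eqref{eq:W}.

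The hardest part is the two-sided one-step transition LDP and, upstream of it, the continuity of $(x,y)\mapsto V(x,y)$. Because $\sigma$ has only rank $2$ while the state space is $n=n_1+n_2+2$--dimensional, one cannot reproduce the elliptic perturbation arguments of \cite{FW} or \cite{DZ} that construct near-optimal controls by solving an auxiliary SDE; instead one must invoke the strong complete controllability established in Section \ref{sec:5} together with the small-time local controllability around the periodic orbits $K_i$ announced in Theorem \ref{theo:stlc}, where the drift itself has to be used as a shift along the orbit rather than regarded as an obstacle. It is precisely here that the cascade structure of $b$ and the chain estimates of Delarue and Menozzi \cite{delaruemenozzi} play an essential role, and this is the step I expect to require by far the most work.
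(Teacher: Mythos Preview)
Your proposal is correct and follows essentially the same route as the paper: the embedded-chain/renewal decomposition via the stopping times $\tau_n,\sigma_n$, the Freidlin--Wentzell graph formula for the invariant measure of the chain on $\bigcup_i\partial g_i$, and the correct identification of the hard step as the cost-function regularity in the degenerate setting, resolved through the cascade structure, the Delarue--Menozzi estimates, and the shifted small-time local controllability along the periodic orbits. Two minor sharpenings relative to your outline: the paper expresses the one-step transition exponents first through the \emph{restricted} costs $\tilde V(K_i,K_j)$ (paths avoiding the other $K_l$) and then invokes Lemmas 3.1--3.2 and 4.1--4.2 of Chapter~6 in \cite{FW} to identify $\tilde W$ with $W$, and it only proves and uses \emph{upper} semicontinuity of $V_T$ and $V$ (Theorem~\ref{theo:6}), which is all that is needed for the lower bound.
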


A similar result has been established by Rey-Bellet and Thomas in a recent paper on the asymptotic behavior of thermal non-equilibrium steady states in driven chains of anharmonic oscillators, which is a model of heat conduction, see \cite{reybellet}. Our proof is inspired by their approach. The main difference with respect to their paper is the fact that the $\omega -$limit set of our model is built of periodic orbits rather than stable equilibrium points. As a consequence, the action of the drift vector field plays an important role close to points of any of the periodic orbits. This implies that the property of {\it small time local controllability} -- essential for the proof -- has to be adapted to the present situation. Moreover, the controllability of our system has to be carefully studied  -- indeed we are facing a degenerate situation where Brownian motion is only present in two coordinates of a (possibly) high-dimensional system. As a consequence, the controllability of the system as well as the continuity of $ V (x, y) $ with respect to $x$ and $y$ are difficult questions. It is the {\it cascade structure} of the drift vector which is crucial for our purpose -- we will come back to this point later. 

\section{Proof of Theorem \ref{theo:harris}}\label{sec:control}

We will use the control theorem which goes back to Stroock and Varadhan \cite{StrVar-72}, see also Millet and Sanz-Sol\'e \cite{MilSan-94}, theorem 3.5, in order to prove Theorem \ref{theo:harris}. The following proposition summarizes the inclusion of the control theorem which is important for our purpose. 

\begin{prop}\label{theo:4bis}
Grant Assumption \ref{ass:1}. Denote by $ Q_x^{N,t_0} $ the law of the solution $ (Y^N(t))_{0 \le t \le t_0} $ of  \eqref{eq:cascadeapprox}, starting from $Y^N (0) = x .$ Let $\,\varphi = \varphi^{(N, {h},x)}\,$ denote a solution to

\begin{equation}\label{eq:withN}
d \varphi (t) \;=\; b ( \varphi (t) )\, dt \;+\; \frac{1}{\sqrt{N}} \sigma ( \varphi  (t) )\, \dot{h}(t)\, dt   \quad,\quad \varphi (0)=x.
\end{equation}

Fix $ x   $ and  $  { h } \in \tt H $ such that $\,\varphi = \varphi^{(N, {h},x)}\,$ exists on some time interval $ [ 0, \widetilde T ] $ for $\widetilde T > t_0.$ Then
$$ \left(\varphi^{(N, { h} , x ) }\right)_{| [0,  t_0 ] } \in \overline{ {\rm supp} \left( Q_x^{N,t_0 } \right)}.$$
\end{prop}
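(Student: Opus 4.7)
The statement is the ``positive'' inclusion of the Stroock--Varadhan support theorem adapted to our rescaled SDE \eqref{eq:diffusionsmallnoise}, and I would follow the classical Girsanov plus Gronwall route.

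The plan is first to carry out a Girsanov transformation with density
$$Z_{t_0} := \exp\Bigl(\int_0^{t_0}\dot h(s)\cdot dB_s - \tfrac12\int_0^{t_0}|\dot h(s)|^2\,ds\Bigr).$$
Since $\dot h$ is deterministic and lies in $L^2([0,t_0],\R^2)$, Novikov's condition holds trivially, and $\widetilde Q := Z_{t_0}\cdot Q_x^{N,t_0}$ is a probability measure equivalent to $Q_x^{N,t_0}$ on $\F_{t_0}$. Under $\widetilde Q$, $\widetilde B_t := B_t - h(t)$ is a standard two-dimensional Brownian motion and $Y^N$ satisfies
$$dY^N_t = \bigl[b(Y^N_t) + \tfrac{1}{\sqrt N}\sigma(Y^N_t)\dot h(t)\bigr]dt + \tfrac{1}{\sqrt N}\sigma(Y^N_t)\,d\widetilde B_t,\quad Y^N_0=x.$$
Equivalence of $Q_x^{N,t_0}$ and $\widetilde Q$ on $\F_{t_0}$ implies that the two laws have identical supports in $C([0,t_0],\R^n)$, so the task reduces to showing that $\widetilde Q(\|Y^N - \varphi\|_{\infty,[0,t_0]} < \varepsilon) > 0$ for every $\varepsilon>0$.

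I would then set $D_t := Y^N_t-\varphi(t)$ and subtract \eqref{eq:withN} from the $\widetilde Q$-dynamics. Under Assumption \ref{ass:1}(i)--(ii) the drift $b$ is globally Lipschitz, and because $f_1,f_2$ are bounded below by a positive constant the square-root factors appearing in $\sigma$ are Lipschitz as well, so $b$ and $\sigma$ share a common Lipschitz constant $L^\ast$. A routine Gronwall argument (with the $L^1$-norm of $\dot h$ controlled by Cauchy--Schwarz) yields
$$\sup_{t\le t_0}|D_t| \;\le\; \exp\bigl(L^\ast t_0 + L^\ast\sqrt{t_0}\,\|\dot h\|_{L^2}/\sqrt N\bigr)\sup_{t\le t_0}|M_t|,\qquad M_t := \tfrac{1}{\sqrt N}\!\int_0^t\sigma(Y^N_s)\,d\widetilde B_s.$$
Everything therefore reduces to producing, for each $\eta>0$, an event of positive $\widetilde Q$-measure on which $\sup_{t\le t_0}|M_t|<\eta$.

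This last step is the delicate one, and I would handle it by a Wong--Zakai approximation of $\widetilde B$ by piecewise linear paths $\widetilde B^{(m)}$. The key feature of our model is that the cascade structure of $\sigma$ kills the It\^o--Stratonovich correction: the two nonzero entries $\sigma_{n_1+1,2}$ and $\sigma_{n,1}$ depend only on coordinates ($x_{n_1+2}$ and $x_1$ respectively) distinct from their own row indices, whence $\sum_{j,k}\sigma_{jk}\partial_j\sigma_{ik}\equiv 0$ for every $i$. Consequently the controlled-ODE approximations $Y^{N,(m)}$ driven by $\widetilde B^{(m)}$ converge to $Y^N$ in probability in uniform norm, and on the positive-probability event $\{\|\widetilde B\|_{\infty,[0,t_0]}<\delta\}$, which a fortiori controls $\|\widetilde B^{(m)}\|_\infty$, continuous dependence of the controlled ODE on its driver forces $Y^{N,(m)}$ close to $\varphi$ as $\delta\downarrow 0$; passing to the limit in $m$ transfers this to $Y^N$ and yields the required lower bound. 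The main obstacle one might worry about, namely a nontrivial Stratonovich correction generated by a degenerate $\sigma$, is precisely what the cascade structure of our system circumvents.
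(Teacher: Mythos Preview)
The paper does not give its own proof of this proposition: it is stated as a direct consequence of the Stroock--Varadhan support theorem, with an explicit pointer to Theorem~3.5 of Millet and Sanz-Sol\'e \cite{MilSan-94}. So you are supplying an argument where the paper simply cites one. Your sketch follows the classical Girsanov/Wong--Zakai route that underlies those references, and your central structural observation is exactly right: the It\^o--Stratonovich correction $\tfrac12\sum_{j,k}\sigma_{jk}\partial_j\sigma_{ik}$ vanishes identically because the only nonzero entries of $\sigma$ sit in rows $n_1+1$ and $n$ while their dependence is through coordinates $n_1+2$ and $1$ respectively. This is precisely what allows one to read the support theorem in the It\^o form \eqref{eq:withN} rather than in Stratonovich form, and is the only model-specific ingredient needed to invoke the cited result.

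One comment on the execution. Your Girsanov step is fine, but the last paragraph is a bit too quick: ``continuous dependence of the controlled ODE on its driver'' is not immediate from $\|\widetilde B^{(m)}\|_\infty<\delta$, since the driver enters through $\dot{\widetilde B}^{(m)}$, whose $L^1$-norm is not controlled by the sup-norm of $\widetilde B^{(m)}$. The standard remedy (as in \cite{StrVar-72} and \cite{MilSan-94}) is an integration by parts that trades $\int\sigma(Y^{(m)})\,d\widetilde B^{(m)}$ for boundary terms plus integrals against $\widetilde B^{(m)}$ itself; the $\dot{\widetilde B}^{(m)}$-contributions that reappear through $dY^{(m)}$ are then handled by iterating, and it is here that the cascade structure of $\sigma$ (and the smoothness of $f_1,f_2$ from Assumption~\ref{ass:1}) is used. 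You have identified the right mechanism; just be aware that the ``continuous dependence'' you invoke hides this iterated integration-by-parts step rather than a naive Gronwall bound.
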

 
We now show how to use Proposition \ref{theo:4bis} in order to prove Theorem \ref{theo:harris}.  

{\it Proof of Theorem \ref{theo:harris}. }

By Proposition \ref{prop:lyapunov}, putting $F := \{ x : G(x)  \le 2 b /a \} ,$ the set $F$ is visited infinitely often by the process $Y^N , $ almost surely. 
Fix $ x \in F $ and recall that $ x^* $ is the unique equilibrium point of the system \eqref{eq:cascade}. We will show in Theorem \ref{theo:control} below that it is possible to choose ${ h } \in \tt H $ such that $\,\varphi = \varphi^{(N, {h},x)}\,$ satisfies $ \varphi (T) = x^* $ (for some arbitrary fixed $ T > 0 $). 

We have therefore shown the following assertions.

\begin{enumerate}
\item 
There exists an attainable point $x^* $ for $ Y^N .$ 
\item
There exists a Lyapunov function for $Y^N, $ in the sense of Proposition \ref{prop:lyapunov}.
\item
The weak H\"ormander condition holds. 
\end{enumerate} 

Under these conditions, it is classical to show (see e.g.\ Theorem 1 of H\"opfner et al.\ \cite{hh3}), that $Y^N$ is positively recurrent in the sense of Harris. The fact that $\mu^N $ is of full support follows again from Theorem \ref{theo:control} below implying that the control system \eqref{eq:withN} is strongly completely controllable. This concludes the proof. 
\hfill $\bullet$ 

In the following we will prove that the control system \eqref{eq:generalcontrolsystem} is {\it strongly completely controllable}.

\section{Controllability}\label{sec:5} 
\begin{theo}\label{theo:control}
Grant Assumption \ref{ass:1}.  Then the control system $\,\varphi = \varphi^{({ h},x)}\,$ given by
$$
d \varphi (t) \;=\; b ( \varphi (t) )\, dt \;+\; \sigma ( \varphi  (t) )\, \dot{h}(t)\, dt   \quad,\quad \varphi (0)=x, 
$$
is strongly completely controllable, i.e.\ for all $T > 0 ,$ for any pair of points $ x, y \in \R^n ,$  there exists a control $ {h } \in \,\tt H\,  $ such that $\varphi^{(h,x)}  (T) = y .$ 
\end{theo}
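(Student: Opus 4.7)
The key point is the cascade structure of the drift: within each population the first $n_k$ coordinates obey a linear triangular system in which coordinate $\varphi_l$ is driven only by $\varphi_l$ and $\varphi_{l+1}$, and only the last coordinate of each chain contains the nonlinear interaction with the other population together with the scalar Brownian control. My plan is to decouple the two populations by \emph{prescribing} the two ``head'' coordinates $\varphi_1(t)$ and $\varphi_{n_1+2}(t)$ as smooth functions $\psi_1,\psi_2$ satisfying the two-point boundary conditions inherited from $x$ and $y$; the cascade then determines the remaining coordinates by successive differentiation, and the last equation of each chain is inverted pointwise to read off the scalar controls $\dot h^1,\dot h^2$. The strict lower bound $f_k \ge \underline{f}>0$ guarantees that this inversion is well-defined.

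\textbf{Construction of $\psi_1$ (and analogously $\psi_2$).} Given any $C^{n_1+1}$ candidate $\psi_1$, put $\varphi_1:=\psi_1$ and recursively $\varphi_{l+1}:=\dot\varphi_l+\nu_1\varphi_l$ for $1\le l\le n_1$. An induction gives $\varphi_{l+1} = (d/dt+\nu_1)^l\psi_1$, so the boundary conditions $\varphi_l(0)=x_l$, $\varphi_l(T)=y_l$ for $1\le l\le n_1+1$ become a triangular linear system that uniquely determines $\psi_1^{(k)}(0)$ and $\psi_1^{(k)}(T)$ for $k=0,\dots,n_1$. These $2(n_1+1)$ conditions are satisfied by a Hermite interpolating polynomial of degree $2n_1+1$, which is smooth on $[0,T]$. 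The same construction with $\nu_2$ in place of $\nu_1$ yields $\psi_2$, and all the middle equations of both cascades, namely $\dot\varphi_l = -\nu_k\varphi_l+\varphi_{l+1}$, then hold by construction.

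\textbf{Recovery of the controls.} Using $f_2 \ge \underline{f}>0$, define
\begin{equation*}
\dot h^2(t) := \frac{\sqrt{p_2}}{c_1\sqrt{f_2(\varphi_{n_1+2}(t))}}\Bigl[\dot\varphi_{n_1+1}(t) + \nu_1\varphi_{n_1+1}(t) - c_1 f_2(\varphi_{n_1+2}(t))\Bigr],
\end{equation*}
so that the last equation of the first cascade, $\dot\varphi_{n_1+1} = -\nu_1\varphi_{n_1+1} + c_1 f_2(\varphi_{n_1+2}) + (c_1/\sqrt{p_2})\sqrt{f_2(\varphi_{n_1+2})}\,\dot h^2$, is automatically satisfied. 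The analogous formula involving $\dot\varphi_n$, $\varphi_n$ and $f_1(\varphi_1)$ defines $\dot h^1$. Since $\varphi_1,\ldots,\varphi_n$ are smooth and $f_1,f_2$ are Lipschitz and bounded below, $\dot h^1,\dot h^2$ are continuous on $[0,T]$, hence in $L^2([0,T])$, so $h\in{\tt H}$. The resulting $\varphi=\varphi^{(h,x)}$ solves \eqref{eq:generalcontrolsystem} with $\varphi(0)=x$ and $\varphi(T)=y$.

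\textbf{Where the difficulty lies.} There is no serious topological obstruction here, and the argument works for \emph{any} $T>0$; the two essential ingredients are the strict positivity $f_k\ge\underline{f}>0$, which lets us divide by $\sqrt{f_k(\cdot)}$ to neutralize the nonlinear interaction term and use the scalar control freely, and the cascade/triangular structure, which decouples the boundary conditions into two independent Hermite interpolation problems rather than a globally coupled nonlinear one. The nontrivial work in this section will rather be in the \emph{quantitative} control estimates (continuity of $V(x,y)$, small-time local controllability along the orbits) needed for the large deviation analysis; strong complete controllability itself is essentially a direct consequence of the cascade structure combined with the lower bound on the jump rates.
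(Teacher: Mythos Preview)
Your proof is correct and follows the same \emph{high-level} strategy as the paper --- decouple the two populations by prescribing two scalar trajectories, exploit the cascade structure within each population, and then recover $\dot h^1,\dot h^2$ by dividing through by $\sqrt{f_k(\cdot)}$ using the strict lower bound $f_k\ge\underline f>0$ --- but the way you realise the within-population step is genuinely different. The paper keeps the \emph{tail} coordinate free, rewrites the $(n_1{+}1)$-dimensional block as the linear system $\dot\Phi=A_1\Phi+B_1 u^1$, and invokes the Kalman rank condition (the columns $B_1,A_1B_1,\dots,A_1^{n_1}B_1$ span $\R^{n_1+1}$, so the Gram matrix $Q_T$ is invertible) to obtain $u^1$; only afterwards is $u^1$ converted into $\dot h$. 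You instead fix the \emph{head} coordinate as a Hermite interpolating polynomial and propagate upward via $\varphi_{l+1}=(d/dt+\nu_1)^l\psi_1$, which turns the boundary conditions into a triangular linear system for the derivatives of $\psi_1$ at $0$ and $T$.

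What each buys: your argument is more elementary and fully self-contained --- no appeal to linear control theory, just polynomial interpolation --- and it makes the smoothness of the constructed trajectory transparent. The paper's route, on the other hand, produces as a by-product the explicit quadratic cost $V_T^{1,\mathrm{lin}}(x,y)=\langle e^{TA_1}x-y,\,Q_T^{-1}(e^{TA_1}x-y)\rangle$ for the linear subsystem, and it is this object (together with the Delarue--Menozzi scaling estimates for $Q_T^{-1}$) that is reused in the proof of Theorem~\ref{theo:6} to control the cost of steering over a short terminal interval $[T-\delta,T]$. So while your approach is perfectly adequate for Theorem~\ref{theo:control} itself, the paper's linear-control formulation is not incidental: it sets up the machinery needed for the quantitative continuity estimates that follow.
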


\begin{proof}
The main idea of the proof is to use the fact that the drift vector field is linear -- except for the last coordinate of each population encoding the interactions between the two populations. Imposing a trajectory for the two coordinates carrying the noise -- and carrying the interactions -- allows to decouple the two populations and to rely on linear control problems. Our Ansatz is to write $ \varphi (t) = ( \Phi (t) , \Psi (t) ) $ with $ \Phi (t ) = (\varphi_1(t) , \ldots , \varphi_{n_1 +1 } (t) ) ,$ $ \Psi (t) = (\varphi_{n_1+2 } (t), \ldots, \varphi_n (t) ) .$ $\Phi ( t) $ summarizes the coordinates describing the first population of particles, $ \Psi (t) $ describes the second population. We choose $ \Phi $ and $ \Psi$ such that they are solution of a different control system, given by

\begin{equation}\label{eq:newcontrol}
\dot \Phi (t) = F_1 ( \Phi ( t))  + B_1 u^1 ( t) ,
\end{equation}

where $ B_1 \in \R^{n_1+1}$ is the vector given by 
\begin{equation}\label{eq:B}
B_1 = \left( \begin{array}{c}
0 \\
0 \\
\vdots \\
0 \\
1 
\end{array}
\right) 
\end{equation}
and where for $x = (x_1, \ldots , x_{n_1 +1} ),$ 
\begin{equation}\label{eq:F}
 F_1 (x)  = \left( 
\begin{array}{c }
- \nu_1 x_1 + x_2 \\
- \nu_1 x_2+ x_3 \\
\vdots \\
- \nu_1 x_{n_1 } + x_{n_1 +1} \\
- \nu_1 x_{n_1 +1} 
\end{array} 
\right) .
\end{equation}
Analogous definitions apply to the second population described by $ \Psi  (t) .$ 

In what follows, by abuse of notation, we will systematically write $ x= (x_1, \ldots , x_{n_1 +1} ) $ for starting configurations of $ \Phi ( t) $ or $ x = ( x_{n_1 + 2} , \ldots , x_n ) $ for those of $\Psi (t) $ or $ x = (x_1 , \ldots , x_n ) $ for 
starting configurations of the entire system, depending on the context.

Notice that writing $ A_1 := \left( \frac{\partial F_1^i (x) }{\partial x_j } \right)_{ 1 \le i, j \le n_1 +1 } ,$ we can rewrite \eqref{eq:newcontrol} as
\begin{equation}\label{eq:newcontrolsecond}
 \dot \Phi (t) = A_1 \Phi ( t) + B_1 u^1 (t) .
\end{equation} 
By Theorem 1.11 in Chapter 1 of Coron \cite{coron}, the problem \eqref{eq:newcontrolsecond} is controllable at time $T$ if and only if the associated Gram matrix
$$ Q_T := \int_0^T e^{(T- t)A_1} B_1 B_1^* (e^{(T- t ) A_1 })^* dt $$
is invertible. But by the cascade structure of the drift, $ B_1, A_1 B_1, A_1^2 B_1 , \ldots , A_1^{n_1} B_1$ span $ \R^{n_1 +1 } , $ implying that $Q_T $ is non degenerate. 
As a consequence, for any $ x= (x_1 , \ldots, x_{n_1 +1 } )$ and $ y = ( y_1 , \ldots , y_{n_1 +1}) $ in $ \R^{n_1 + 1 } $ there exists a control $ u^1 (t) $ steering the solution $ \Phi $ of \eqref{eq:newcontrol} from $x$ to $ y, $ during $ [0, T ] .$ The associated cost functional is given by 
$$ V_T^{1, lin} (x, y ) = < e^{T A_1 } x - y , Q_T^{-1} ( e^{T A_1 } x - y ) > , $$
see Proposition 1.13 in Chapter 1 of \cite{coron}. A similar result applies to the second population, i.e.\ the system described by $ \Psi .$ 

We resume the above discussion and come back to the total process, consisting of the two populations. For any $x = (x_1 , \ldots , x_n ) $ a possible initial configuration of the two populations, and for any $ y \in \R^n ,$ we have therefore a control $ (u^1 (t) , u^2 (t)) ,$ such that the decoupled and linear system 
$ ( \Phi (t) , \Psi (t) ) $ solution of \eqref{eq:newcontrol} (and the analogous equation for the second population) is steered from $x$ to $y, $ during $ [0, T ].$ 

In what follows, we shall write $ \Phi (t) = ( \Phi_1 (t) , \ldots , \Phi_{n_1 +1} ( t) ) $ and $ \Psi ( t) = (\Psi_1 ( t) , \ldots , \Psi_{n_2 +1} (t) ) .$ 

In order to come back to the original control system, we put
\begin{equation}\label{eq:choiceh}
\dot h^1 (t) = \frac{u^1 (t) - c_1 f_2 (  \Psi_1 (t) ) }{c_1 / \sqrt{p_2} \; \sqrt{ f_2 (  \Psi_1 (t) ) } } \mbox{ and } \dot h^2 (t) = \frac{u^2 (t) - c_2 f_1 (  \Phi_1 (t) ) }{c_2 / \sqrt{p_1} \; \sqrt{ f_1 (  \Phi_1 (t) ) } }  .
\end{equation}
Since $f_1 $ and $f_2$ are lower bounded, $ \dot h^1 $ and $ \dot h^2 $ are well-defined and admissible, that is, $ \dot h^1 , \dot h^2 \in L^2_{loc}.$  Moreover, by the structure of \eqref{eq:newcontrol}, 
\begin{eqnarray*}
 \frac{ d \Phi_{ n_1 +1 } (t) }{dt} &= &- \nu_1 \Phi_{n_1 + 1 } ( t) + u^1 (t) \\
 &=& - \nu_1 \Phi_{n_1 + 1 } ( t)  + c_1 f_2 ( \Psi_1 (t) ) + \frac{c_1}{\sqrt{p_2}} \sqrt{ f_2 ( \Psi_1 (t) )} \dot h^1 (t) ,
\end{eqnarray*}
thus $ \varphi = ( \Phi, \Psi ) ,$ together with the choice of $ h $ in \eqref{eq:choiceh},  is solution of the original control problem \eqref{eq:generalcontrolsystem}. 
\end{proof} 
 
We use the ideas of the above proof to show that the cost functions $V (x,y) $ and $V_T (x,y) $ are upper semi continuous. 

\begin{theo}\label{theo:6}
Grant Assumption \ref{ass:1}. Then the  cost functions $ V_T ( x, y ) $ and $V(x, y) $ are upper semicontinuous in $x$ and in $y.$ 
\end{theo}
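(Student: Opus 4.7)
I would prove upper semicontinuity of $V_T$ at a fixed pair $(x,y)$ directly from the definition: given $\eta>0$, construct, for $(x',y')$ near $(x,y)$, an admissible control $h'\in\tt H$ driving $x'$ to $y'$ in time $T$ with $\frac12\int_0^T\|\dot h'\|^2\,ds\le V_T(x,y)+\eta$. Upper semicontinuity of $V(x,y)=\inf_t V_t(x,y)$ then follows routinely by picking $T$ with $V_T(x,y)\le V(x,y)+\eta/2$ and applying u.s.c.\ of $V_T$.

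Choose $h^*\in\tt H$ with $\varphi^{(h^*,x)}(T)=y$ and $\frac12\int_0^T\|\dot h^*\|^2\,ds\le V_T(x,y)+\eta/2$. For $\delta>0$ to be chosen, set $x_\delta:=\varphi^{(h^*,x)}(\delta)$ and $y_\delta:=\varphi^{(h^*,x)}(T-\delta)$, and assemble $h'$ on $[0,T]$ by concatenating three pieces: on $[\delta,T-\delta]$, the time-shifted restriction of $h^*$, which drives $x_\delta$ to $y_\delta$ at cost at most $V_T(x,y)+\eta/2$; on $[0,\delta]$, the \emph{decoupling} control of the proof of Theorem \ref{theo:control} driving $x'$ to $x_\delta$; and symmetrically on $[T-\delta,T]$, the decoupling control driving $y_\delta$ to $y'$. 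Write $J_\delta(a,b)$ for the cost, measured as $\frac12\int_0^\delta(|\dot h^1|^2+|\dot h^2|^2)ds$, of the decoupling construction between points $a,b$ in time $\delta$: one takes the LQ-optimal controls $u^1,u^2$ for the decoupled problem \eqref{eq:newcontrolsecond} and converts them to $\dot h^1,\dot h^2$ via \eqref{eq:choiceh}.

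Two properties are needed. \emph{Continuity of $J_\delta$ at fixed $\delta$}: the LQ controls and the inverse Gram matrices $Q_\delta^{(k),-1}$ depend smoothly on the endpoints, and the back-conversion \eqref{eq:choiceh} is continuous since $f_1,f_2$ are bounded away from zero and Lipschitz, so $(a,b)\mapsto J_\delta(a,b)$ is continuous. \emph{Smallness of $J_\delta(x,x_\delta)$ and $J_\delta(y_\delta,y)$ as $\delta\to 0$}: this is the key quantitative step. By the cascade structure of \eqref{eq:F} (the nilpotent super-diagonal form of $A_1,A_2$), the displacement $x_\delta-e^{\delta A}x$ in the $l$-th coordinate (counted from the top of the cascade) has size at most $C\delta^{n_1+2-l}+C\delta^{n_1+3/2-l}\|\dot h^*\|_{L^2[0,\delta]}$, while the LQ cost weights this coordinate by $\delta^{-(2(n_1+1-l)+1)}$. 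Summing the weighted squared displacements over $l$ and over both populations yields
\[
J_\delta(x,x_\delta)\le C\delta+C\sqrt\delta\,\|\dot h^*\|_{L^2[0,\delta]}+C\|\dot h^*\|_{L^2[0,\delta]}^2,
\]
which vanishes as $\delta\to 0$ since $\dot h^*\in L^2([0,T])$; the same bound holds for $J_\delta(y_\delta,y)$. These weighted cost estimates are exactly in the spirit of the cascade analysis of Delarue--Menozzi \cite{delaruemenozzi}.

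Combining, I would fix $\delta$ small enough that $J_\delta(x,x_\delta)+J_\delta(y_\delta,y)\le\eta/4$, then use continuity at this fixed $\delta$ to find a neighbourhood of $(x,y)$ in which $J_\delta(x',x_\delta)+J_\delta(y_\delta,y')\le\eta/2$; the concatenated $h'$ then satisfies $\frac12\int_0^T\|\dot h'\|^2\le V_T(x,y)+\eta$, proving upper semicontinuity of $V_T$. The main obstacle is the cascade-weighted estimate above: it relies on a delicate interplay between the explosion of the inverse Gram matrix at small $\delta$ and the compensating fast decay of the displacement through the nilpotent super-diagonal of $A$. Once this is in hand, the rest is a standard three-piece concatenation.
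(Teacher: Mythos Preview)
Your argument is correct and close in spirit to the paper's, but the architecture is genuinely different, and in some respects more transparent.

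\textbf{What the paper does.} The paper keeps a \emph{single} terminal interval $[T-\delta,T]$ for the endpoint correction. On $[0,T-\delta]$ it does not reuse the original control exactly; instead it prescribes smooth perturbations $\tilde\gamma_1,\tilde\gamma_2$ of the two noise-carrying coordinates that start from $\tilde x_{n_1+1},\tilde x_n$ and stay $\varepsilon$-close (in $C^1$) to the original ones. The remaining coordinates are then determined by the cascade ODE and stay $\kappa\varepsilon$-close to $\varphi$. This produces a path from $\tilde x$ to some $\tilde z\in B_{\kappa\varepsilon}(z)$ with cost within $\eta$ of the original. On $[T-\delta,T]$ the paper decouples, and uses Delarue--Menozzi twice: the \emph{lower} bound (their Prop.~4.1) to deduce from the smallness of $\int_{T-\delta}^T|u^k|^2$ that the scaled displacement $|T_\delta^{-1}(\theta_\delta(z)-y)|$ is small, then continuity of this scaled displacement in $(z,y)$, then the \emph{upper} bound (their Prop.~4.2) to build a control with small $\|\tilde u\|_\infty$ steering $\tilde z$ to $\tilde y$.

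\textbf{What you do.} You leave the middle piece $[\delta,T-\delta]$ untouched and perform LQ endpoint corrections on \emph{both} sides via the decoupling of Theorem~\ref{theo:control}. Your key smallness estimate $J_\delta(x,x_\delta)\to 0$ is correct: in fact it can be obtained without the coordinate-by-coordinate weighting you sketch, simply because the control $u^1_{\mathrm{orig}}(s)=c_1 f_2(\varphi_{n_1+2}(s))+(c_1/\sqrt{p_2})\sqrt{f_2}\,\dot h^{*,1}_s$ is itself admissible for the linear problem on $[0,\delta]$, so $V^{1,\mathrm{lin}}_\delta(x,x_\delta)\le \tfrac12\int_0^\delta |u^1_{\mathrm{orig}}|^2\le C\delta+C\|\dot h^*\|_{L^2[0,\delta]}^2\to 0$; the back-conversion \eqref{eq:choiceh} then adds at most $C\delta$. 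Continuity of $J_\delta$ at fixed $\delta$ is immediate from the explicit LQ formula and the lower bound on $f_1,f_2$. Your approach thus bypasses both the tube-perturbation Step~1 of the paper and the two-sided use of Delarue--Menozzi in Step~3--5, at the price of introducing a second short correction interval. Either route suffices; yours is arguably the more economical one.
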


The main difficulty in the proof of this result is the fact that due to the hypo-ellipticity of the diffusion, the action of the drift is important in small time. As a consequence, if we want to steer the process within a small time step $\delta $ to any possible target point within a given ball, we have to take into account the action of the drift. It turns out that it is possible to steer the process from a fixed starting point $z$ to any point within a small ball around $ z + b(z) \delta ,$ and that the cost of doing this remains small, for small $ \delta .$ This is related to small time local controllability, see below, and also to the fact that the weak H\"ormander condition is satisfied. There is also a relation with density estimates of the associated diffusion over small time intervals, see e.g.\ Pigato \cite{Pigato}. In the proof we shall use tools developed in the recent paper by Delarue and Menozzi \cite{delaruemenozzi} where the same ``cascade''-structure of the drift as in our case is present.

\begin{proof} We fix some $\eta > 0 . $ Fix $T$ and $ x, y .$ Then there exists a control $h  $ such that $ \varphi^{ ( h , x) } ( T) = y $ and such that $I_{x, T } ( \varphi) \le V_T ( x, y ) + \eta .$ In the following we work with this fixed control and with the fixed trajectory $ \varphi := \varphi^{(h,x)}.$ 

Let us briefly explain the idea of our proof. We first show that for any $\delta > 0  $ and for any $\tilde x $ belonging to a small neighborhood of $x, $ it is possible to perturb the control $h$ on an interval $ [0, T - \delta ] $ such that the newly obtained controlled trajectory $\tilde \varphi $ stays within a small tube around $ \varphi  $ during $[0, T - \delta]  $ and such that the cost of doing so is comparable to the original cost $ I_{x, T - \delta } ( \varphi  ) .$ We then show that we can choose $\delta $ sufficiently small such that we are able to steer $\tilde \varphi $ from its position at time $T- \delta $ to any target position $ \tilde y $  belonging to a small neighborhood of $y, $ by maintaining the cost of doing so negligible. This last step will be done by relying on the ideas developed in the proof of the preceding theorem.  

{\bf Step 1.} We fixe some $0 < \delta < T $ and points $\tilde x , \tilde y $ in some neighborhoods of $x$ and $y.$ These neighborhoods and $ \delta $ will be chosen later. Write for short $ \gamma_1 (t) = \varphi_{n_1 +1} ( t) , $ $ \gamma_2 ( t) = \varphi_n ( t) $ for the two components of $ \varphi  $ depending directly on the control. In a first step of the proof, for a given $ \varepsilon , $ we choose any smooth trajectories $ \tilde \gamma_1 $ and $ \tilde \gamma_2 $ such that for all $0 \le t \le T - \delta , $
$$ \tilde \gamma_1 (t) \in B_\varepsilon ( \gamma_1 (t) ) , \tilde \gamma_2 (t) \in B_\varepsilon ( \gamma_2 (t) )  $$ 
and also 
$$  \frac{d}{dt} \tilde \gamma_1 (t) \in B_\varepsilon (\frac{d\gamma_1 (t) }{dt}  ) , \frac{d}{dt} \tilde \gamma_2 (t) \in B_\varepsilon ( \frac{d \gamma_2 (t)}{dt}  )  ,$$
with 
$$ \tilde \gamma_1 ( 0) = \tilde x_{n_1 +1 } , \;  \tilde \gamma_2 (0) = \tilde x_n .$$
We then put 
$$ \tilde \varphi_{n_1 + 1 } (t) := \tilde \gamma_1 (t) , \; \tilde \varphi_{n} (t) :=  \tilde \gamma_2 (t) ,$$
for all $ 0 \le t \le T - \delta .$ 
 
Once these two trajectories are fixed, by the structure of $b,$ we necessarily have
$$
 \tilde \varphi_{n_1  } (t)= e^{- \nu_1 t } \tilde x_{n_1} + e^{-\nu_1 t} \int_0^t e^{\nu_1 s} \tilde \gamma_1 ( s) ds .$$
Now, since $ \tilde x_{n_1} \in B_{\varepsilon } ( x_{n_1} ) $ and $ \tilde \gamma_1 ( s) \in  B_{\varepsilon } ( \gamma_1 (s)  ),$ for all $ s \le T - \delta , $ we certainly have that 
$$|  \tilde \varphi_{n_1  } (t) - 
(  e^{- \nu_1 t }  x_{n_1} + e^{-\nu_1 t} \int_0^t e^{\nu_1 s}  \gamma_1 ( s) ds  )| \le  \max( 1, \frac{1}{\nu_1 } ) \varepsilon   .$$ 
Thus, since $  e^{- \nu_1 t }x_{n_1} + e^{-\nu_1 t} \int_0^t e^{\nu_1 s}  \gamma_1 ( s) ds = \varphi_{n_1 } ( t) , $ 
$$ | \tilde \varphi_{n_1  } (t) -   \varphi_{n_1 } ( t) | \le  \max( 1, \frac{1}{\nu_1 } ) \varepsilon .$$ 

The same arguments apply for the other coordinates $ \tilde \varphi_i .$ 

As a consequence, introducing  $ \kappa  = \sqrt{n} \max( 1, \frac{1}{\nu_1^{n_1}}, \frac{1}{\nu_2^{n_2}  } ) ,$ where $ n = n_1 + n_2 + 2,$ we have constructed a trajectory $ \tilde \varphi (t)$ such that 
$$ \tilde \varphi (t)  \in B_{ \kappa \varepsilon } ( \varphi (t ) ) \mbox{ for all } t \le T- \delta .$$ 

The control which allows to produce this trajectory is given by 
$$\dot{ \tilde h}^1 (t) = \frac{\frac{d}{dt } \tilde \gamma_1 (t) + \nu_1  \tilde \gamma_1 ( t) - c_1 f_2 ( \tilde \varphi_{ n_1 + 2 } (t))  }{
c_1/ \sqrt{p_2} \; \sqrt{ f_2 ( \tilde \varphi_{ n_1 + 2} (t))} } , \;  \dot{\tilde h}^2 (t) = \frac{\frac{d}{dt } \tilde \gamma_2 (t) + \nu_2 \tilde \gamma_2 ( t) - c_2 f_1 ( \tilde \varphi_{ 1 } (t))  }{
c_2/ \sqrt{p_1} \; \sqrt{ f_1 ( \tilde \varphi_{  1 } (t))} } .$$
By continuity of $ f_1, f_2$ and the fact that $f_1,  f_2 $ are lower bounded, there exist $ \eta_1 = \eta_1 ( \varepsilon)  $ and $ \eta_2 = \eta_2 ( \varepsilon ) $ with $ \eta_1 ( \varepsilon ) \to 0, \eta_2 ( \varepsilon ) \to 0 $ as $ \varepsilon \to 0 , $  such that $ \dot{ \tilde h}^1 (t)  \in B_{\eta_1} ( \dot{  h}^1 (t) ) $ and $ \dot{ \tilde h}^2 (t) \in B_{\eta_2} ( \dot{  h}^2 (t) ), $ for all $t \le T - \delta .$ We choose $ \varepsilon_1 $ such that 
\begin{equation}\label{eq:choiceofepsilon}
 (T- \delta ) [(\eta_1 ( \varepsilon))^2 + (\eta_2 ( \varepsilon))^2] \le \eta \mbox{ for all }  \varepsilon \le \varepsilon_1.
\end{equation}
Then clearly 
$$ I_{\tilde x, T- \delta } ( \tilde \varphi ) \le I_{x, T - \delta } ( \varphi )   + \eta .$$ 
For the moment we have produced a controlled trajectory $ \tilde \varphi $ steering the initial point $ \tilde x $ belonging to  $B_\varepsilon (x)$ to a point $ \tilde z = \tilde \varphi (T- \delta ) \in B_{\kappa \varepsilon} ( z ) , $ where $z = \varphi (T- \delta ) ,$ such that we have a control on the cost function of this new trajectory. \\

{\bf Step 2.} Consider now the original control system $ \varphi = \varphi^{(h,x) } $ on the interval $ [T - \delta , T].$ Its coordinate $ \varphi_{n_1 +1} $ solves the equation 
$$ \dot \varphi_{n_1+1 } ( t) = - \nu_1 \varphi_{n_1+1 } ( t) + c_1 f_2 ( \varphi_{n_1 + 2 } (t)) + \frac{c_1}{ \sqrt{p_2}} \; \sqrt{ f_2 (  \varphi_{ n_1 + 2 } (t))}  \dot h_t^1  .$$ 
If we write 
\begin{equation}\label{eq:decouplefirst}
 u^1 (t) := c_1 f_2 ( \varphi_{n_1 + 2 } (t)) + \frac{c_1}{ \sqrt{p_2}} \; \sqrt{ f_2 (  \varphi_{ n_1 + 2 } (t ))}  \dot h_{t } ^1  ,
\end{equation}
then clearly, $ u^1 \in L^2 ( [0, T ] ) $ and 
\begin{equation}\label{eq:decouplesecond}
 \dot \varphi_{n_1+1 } ( t) = - \nu_1 \varphi_{n_1+1} ( t)  + u^1 (t)  , T- \delta  \le t \le T , 
\end{equation}
with $\varphi_{n_1+1} ( T - \delta) = z, \varphi_{n_1+1 } (T) = y. $ The same argument applies for $ \dot \varphi_n ( t) , $ with the definition $ u^2 (t) = c_2 f_1 ( \varphi_{ 1 } (t)) + (c_2/ \sqrt{p_1}) \; \sqrt{ f_1 (  \varphi_{ 1 } (t ))}  \dot h_{t } ^2 .$

Since $\dot h^1 , \dot h^2 \in L^2 ( [0, T ] )  $ and since $ f_1$ and $ f_2 $ are bounded,  we can now choose $ \delta $ such that 
\begin{equation}\label{eq:choicedelta}
\frac12 \int_{T- \delta }^T (u^1 (t) )^2 + (u^2 (t) )^2 dt \le \underbar f \eta , \;  \delta (\| f_1 \|_\infty  + \| f_2 \|_\infty ) \le \eta  ,
\end{equation}
where we recall that $ \underbar f$ is such that $ f_1 ( x) \geq \underbar f , $ $ f_2 (x) \geq \underbar f , $ for all $ x \in \R .$ 

With this choice of $ u^1 $ and $u^2 $ we can rewrite the control problem on $ [T- \delta , T ] $ as in the proof of Theorem \ref{theo:control}. As there, we put $ \varphi (t) = ( \Phi (t) , \Psi (t) ) $ with $ \Phi (t ) = (\varphi_1(t) , \ldots , \varphi_{n_1 +1 } (t) ) $ and $ \Psi (t) = (\varphi_{n_1+2 } (t), \ldots, \varphi_n (t) ) .$ Then 
$$
\dot \Phi (t) = F_1 ( \Phi ( t) ) + B_1  u^1 ( t) ,\; \dot \Psi (t) = F_2 (\Psi (t) ) + B_2 u^2 (t) ,$$
where $B_1, B_2, F_1 , F_2$ are defined in \eqref{eq:B} and \eqref{eq:F} above. 

In what follows, by abuse of notation, we will systematically write $ z= (z_1, \ldots , z_{n_1 +1} ) $ for starting configurations of $ \Phi ( t) $ or $ z = ( z_{n_1 + 2} , \ldots , z_n ) $ for those of $\Psi (t) $ or $ z = (z_1 , \ldots , z_n ) $ for starting configurations of the entire system, depending on the context.

Having thus constructed a specific controlled trajectory, we certainly have that 
\begin{equation}\label{eq:good}
 \frac12 \int_{T- \delta}^T (u^1 (t) )^2 dt \geq  V^{1,lin}_{\delta} (z,y) , \; \frac12 \int_{T- \delta}^T (u^2 (t) )^2 dt \geq  V^{2,lin}_{\delta} (z,y) , 
\end{equation} 
where $ V^{1, lin}_{ \delta } (z,y) =  \inf \{ \frac12 \int_{T-\delta}^T (u^1 (t) )^2 dt  : \Phi (T - \delta ) = z, \Phi ( T) = y \} $ such that $ \dot \Phi (t) = F_1 ( \Phi ( t) ) + B_1 u^1  (t) ,$ and with a similar definition for the second system.  

{\bf Step 3.} The key observation is now that system \eqref{eq:newcontrol} satisfies the conditions of Section 4.1 of Delarue and Menozzi  \cite{delaruemenozzi} (with order of coordinates reversed, i.e.\ the coordinate depending on the noise is the first in \cite{delaruemenozzi} and not the last as it is the case here). The main point is the {\it cascade-}structure of the drift, i.e., the fact that the $i-$th coordinate of $ F_1 (x), $ which is given by $   - \nu_1 x_i + x_{i+1} ,$ does only depend on the coordinates $ x_i $ and $x_{i+1} ,$ for all $ 1 \le i \le n_1 +1 .$  In particular, writing $T_t $ for the $(n_1 +1) \times (n_1+1) -$diagonal matrix having entries 
$$ T_t = diag ( t^{ n_1+1 }, t^{n_1 }, \ldots , t) ,$$
Proposition 4.1 of \cite{delaruemenozzi} implies that there exists a constant $C_1$ depending only on $T,$ such that 
\begin{equation}\label{eq:explode?}
   V^{1, lin}_{\delta} (z,y) \geq C_1 \delta | T_\delta^{-1} ( \theta_\delta ( z) - y) |^2 , 
\end{equation}
where $ \theta_\delta  $ is the deterministic flow associated to the zero-noise system $ \dot \theta_ t (z)  = F_1 ( \theta_t (z) ) , $ and where $ z  = (z_1 , \ldots, z_{n_1+1 } ), $ $ y  = (y_1 , \ldots , y_{n_1+1 } ) .$ 

%A priori, the control of $ \delta | T_\delta^{-1} ( \theta_\delta ( z) - y) |^2 $ arising in \eqref{eq:explode?} for small $ \delta $ is not evident. Indeed, it is only the last coordinate of $ \theta_\delta( z) $ which travels at speed $ \sqrt{\delta }, $ due to the presence of Brownian motion. The next coordinate, due to the presence of the drift term, evolves at speed $ \delta^{ \frac32}, $ and so on, up to the first coordinate evolving at speed $ \delta^{(2n_1+1)/2} .$ In other words, the first $n_1$ coordinates lead to expressions in $ \delta $ which might explode as $ \delta \to 0 $ since the corresponding term in $ T_\delta^{-1} $ in \eqref{eq:explode?} does so.  

%It is at that point that we use the information contained in \eqref{eq:choicedelta} telling us that in the original control problem, the cost associated to the last interval $[T- \delta, T] $ tends to zero as  $ \delta \to 0, $ and in particular, does not explode as $\delta \to 0 .$ 

The important point is now that as a consequence of \eqref{eq:good} together with \eqref{eq:choicedelta}, we have 
\begin{equation}\label{eq:first}
C_1 \delta | T_\delta^{-1} ( \theta_\delta ( z) - y) |^2  \le \underbar f \; \eta .
\end{equation} 
The same argument applies to the second population.

{\bf Step 5.} Recall the definition of $\varepsilon_1 $ in \eqref{eq:choiceofepsilon}. We now choose $ \varepsilon_2 \le \varepsilon_1 $ such that for all $\varepsilon \le \varepsilon_2, $ for all $ \tilde z \in B_{ \kappa \varepsilon }( z) , $ $ \tilde y \in B_\varepsilon ( y) , $  
\begin{equation}\label{eq:second}
 \delta | T_\delta^{-1} ( \theta_\delta ( \tilde z) - \tilde y) |^2  \le \frac{2}{C_1} \underbar f \;  \eta .
\end{equation} 
We then solve \eqref{eq:newcontrol} on $[T- \delta , T ]  $ and obtain a system $ \tilde \Phi ( t) $  with $\tilde \Phi ( T- \delta ) = \tilde z $ and $ \tilde \Phi ( \delta ) = \tilde y ,$ for any $\tilde  y \in B_\varepsilon ( y ) .$ By Proposition 4.2 of \cite{delaruemenozzi}, this is possible using a control $\tilde  u^1 $ such that 
$$ \sup \{  ( \tilde u^1  (s))^2 , T- \delta \le s \le T \}  \le C_2  | T_\delta^{-1} ( \theta_\delta ( \tilde z) - \tilde y) |^2 
\le \frac{2C_2 }{C_1 \delta } \underbar f \;  \eta,$$
where $C_2 $ is another universal constant and where we have used \eqref{eq:second}. In particular, 
$$ \frac12 \int_{T- \delta }^T  ( \tilde u^1  (s))^2 ds \le \frac{C_2 }{C_1}  \underbar f \; \eta .$$

The same argument applies to $\Psi (t),$ describing the second population of particles. In order to come back to the original control system, we use \eqref{eq:decouplefirst} and find 
$$\dot {\tilde h}_t^1 = \frac{\tilde u^1 (t) - c_1 f_2 ( \tilde \Psi_1 (t) ) }{[c_1 / \sqrt{p_2}] \; \sqrt{ f_2 ( \tilde \Psi_1 (t) ) } } . $$ 
Then 
$$ \frac12 \int_{T- \delta}^T ( \dot {\tilde h}_t^1)^2 dt \le \frac{p_2}{c_1^2} \frac{1}{\underbar f} \frac{C_2 }{C_1} \underbar f  \eta + p_2 \| f_2\|_\infty \delta \le C \eta , $$
for some constant $C$ not depending on $f,$ by the choice of $\delta $ in \eqref{eq:choicedelta}. 

Summarizing the above arguments, we have thus constructed a control $ (\dot{ \tilde h}^1 , \dot{\tilde h}^2 )$ acting on $ [T - \delta , T ]$ steering $ \tilde z $ to $ \tilde y $ for any $ \tilde y \in B_\varepsilon ( y) , $ at a cost at most $ C \eta .$ Pasting together the two control paths $\tilde \varphi $ constructed in Step 1 on $ [0, T - \delta ] $ and the last one, we have thus obtained a path $ \tilde \varphi $ from $ \tilde x $ to $\tilde y$ at a total cost 
$$ I_{\tilde x, T} ( \tilde \varphi ) \le I_{x, T- \delta } ( \varphi ) + \eta + C \eta \le I_{x, T } ( \varphi )  + (1 + C  ) \eta . $$
Since $ \eta $ can be chosen arbitrarily small, this implies that $ V_T (x, y ) $ is upper semicontinuous in $x$ and in $y.$ The fact that $ V(x, y ) $ is upper semicontinuous in $x $ and $y$ follows then easily from this. 
\end{proof}

{\bf Small time local controllability.}
We will now discuss the important notion of small time local controllability which is related to the behavior of the system close to equilibrium points or to periodic orbits.

In the following, we restrict attention to controls $h $ such there is some -- sufficiently fine -- finite partition $0=s_0 < s_1 < \ldots < s_\nu = t$ such that all components  $\dot{ h}^\ell$  are smooth on $s_{r-1}$ and $s_r$. We shall call such controls ${ h}$  {\em piecewise smooth.} 
We denote $R_{  \tau} ( x) $ the set of points which can be reached from $x$ in time $ \tau $ using a piecewise smooth control $ h;$ i.e.\ 
$$R_{\tau}(x)  = \{ \varphi^{( { h}, x)}(\tau)   : \, h\in{\tt H}\;\;\mbox{piecewise smooth}\;  \}.$$

We shall also consider 
$$ R^{ M}_{ \tau}(x)  = \{ \varphi^{({ h}, x)}(\tau)   : \,{ h}\in{\tt H}\;\;\mbox{piecewise smooth}, \| \dot { h} \|_\infty =  \| \dot h^1 \|_\infty + \| \dot h ^2 \|_\infty \le M \;  \}.$$

We say that the system is {\it small-time locally controllable at $x$} if $ R_{  \tau} ( x) $ contains a neighborhood of $x$ for every $ \tau > 0.$ 

\begin{lem}\label{cor:STLCxstar}
$Y^N$ is small-time locally controllable at $x^*.$ 
\end{lem}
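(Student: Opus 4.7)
Since $x^*$ is the equilibrium of \eqref{eq:cascade}, we have $b(x^*)=0$, and the natural strategy is to reduce the small-time local controllability of the nonlinear control system at $x^*$ to the Kalman rank condition for its linearization. With $A:=Db(x^*)$ and $B:=\sigma(x^*)$, the claim will follow from the classical result (see e.g.\ Theorem 3.8 of Coron \cite{coron}) as soon as I verify that
$$\mathrm{rank}\bigl[\,B,\; AB,\; A^2B,\; \ldots,\; A^{n-1}B\,\bigr]=n.$$

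The plan for this rank computation is to exploit exactly the same cascade structure of the drift that made the proof of Theorem \ref{theo:control} work. By \eqref{eq:drift}, $B=\sigma(x^*)$ has only two non-zero columns, collinear respectively to $e_{n_1+1}$ and $e_n$ with coefficients $\sqrt{f_2(x^*_{n_1+2})}\,c_1/\sqrt{p_2}$ and $\sqrt{f_1(x_1^*)}\,c_2/\sqrt{p_1}$, both non-vanishing by Assumption \ref{ass:1}(i). Reading off $A$ from \eqref{eq:drift} one sees that $Ae_l=-\nu_1 e_l+e_{l-1}$ for $2\le l\le n_1+1$, with the analogous relations $Ae_l=-\nu_2 e_l+e_{l-1}$ for $n_1+3\le l\le n$. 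Iterating $A$ on $e_{n_1+1}$ therefore produces $e_{n_1},e_{n_1-1},\ldots,e_1$ as successive linear combinations with the basis vectors already obtained, and iterating $A$ on $e_n$ produces $e_{n-1},\ldots,e_{n_1+2}$ in the same way. Combining the two chains, the columns of $[B,AB,\ldots,A^{n-1}B]$ span $\R^n$, so Kalman's condition holds.

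The final step is the passage from linearized to nonlinear controllability at an equilibrium: the end-point map $h\mapsto \varphi^{(h,x^*)}(\tau)$ is $C^1$ at $h\equiv 0$ in the Cameron--Martin topology, with derivative equal to the end-point map of the linearized system, which is surjective onto $\R^n$ by the Kalman condition just established. An inverse/open-mapping argument then shows that the nonlinear end-point map is open at $0$, so that $R_\tau(x^*)$ contains a neighborhood of $\varphi^{(0,x^*)}(\tau)=x^*$ for every $\tau>0$. The only technical point to check is the regularity of the end-point map, which requires $\sigma$ to remain smooth along the trajectory; this holds because $f_1,f_2$ are analytic and bounded below away from $0$ by Assumption \ref{ass:1}(i). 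Apart from this piece of bookkeeping, which is standard and strictly easier than the global control construction already carried out in the proof of Theorem \ref{theo:control}, I do not foresee any genuine obstacle; the essential content of the lemma is the Kalman rank computation, which in turn is nothing but a reformulation of the cascade-propagation mechanism already exploited twice in the paper.
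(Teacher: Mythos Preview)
Your proof is correct and, at the level of mathematical content, coincides with the paper's: the cascade structure of the drift forces the spanning condition, and a black-box theorem from control theory then yields small-time local controllability at the equilibrium $x^*$. The only difference is one of language and of which black box is invoked. You linearize at $x^*$ (using $b(x^*)=0$), verify the Kalman rank condition for $(A,B)=(Db(x^*),\sigma(x^*))$, and appeal to the classical linearization principle (Coron, Theorem~3.8). The paper instead phrases the same spanning computation in terms of iterated Lie brackets $\sigma^i,[\sigma^i,b],[[\sigma^i,b],b],\ldots$ and then cites the Sussmann/Bianchini--Stefani criterion via Lewis. Since $b(x^*)=0$, one has $[\,\cdot\,,b]^k\sigma^i(x^*)=A^k\sigma^i(x^*)$ up to sign, so the two spanning conditions are literally the same computation at $x^*$. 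Your route is arguably the more elementary and self-contained one; the paper's formulation has the advantage of tying in directly with the weak H\"ormander condition already used elsewhere.
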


\begin{proof}
Write $ \sigma^1 $ and $ \sigma^2$ for the two columns of the diffusion matrix $ \sigma .$ Then it is straightforward to verify that $ \sigma^1, [\sigma^1 , b ] , [[\sigma^1, b ] , b ] , \ldots $ and $\sigma^2, [\sigma^2 , b ] , [[\sigma^2, b ] , b ] \ldots $ span $ \R^n .$ In particular, the system satisfies the weak H\"ormander condition. Then the assertion follows from Theorem 3.4 of Lewis \cite{Lewis}, based on the results of Sussmann \cite{Sussmann} and Bianchini and Stefani \cite{Bianchini}.
\end{proof}

The following theorem states a result concerning the small time controllability around points which are on a periodic orbit $\Gamma $ of the limit system \eqref{eq:cascade}. On $\Gamma , $ the drift vector $ b  $ plays an important role, in the sense of a ``shift'' along the orbit. As a consequence, the system is not small time locally controllable in the classical sense, but in a ``shifted sense'' as stated in the following theorem.

\begin{theo}\label{theo:stlc}
Grant Assumptions \ref{ass:1} and \ref{ass:4}. Let $\Gamma $ be a periodic orbit of \eqref{eq:cascade}, $x_0 \in \Gamma ,$
and let $x^{x_0}(t) $ be the solution of \eqref{eq:limit}, issued from $ x_0  $ at time $0.$ Then there exists $ \delta^* $ such that for any $0< \delta < \delta^* ,$ for all $M ,$ we have that $x^{x_0} (\delta)   $ is in the interior of $R^{M}_\delta (x_0) .$
\end{theo}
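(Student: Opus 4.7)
The plan is to reduce to the auxiliary decoupled linear system from the proof of Theorem~\ref{theo:control}. Set $\varphi^0(t) := x^{x_0}(t)$ for the zero-control reference on $[0,\delta]$ and put $\bar u^1(t) := c_1 f_2(\varphi^0_{n_1+2}(t))$, $\bar u^2(t) := c_2 f_1(\varphi^0_1(t))$, which are bounded smooth functions. Inspection of \eqref{eq:choiceh} shows that $\varphi^0$ corresponds to $\dot h \equiv 0$ in the original coupled system and, simultaneously, solves the decoupled pair $\dot\Phi = A_1\Phi + B_1\bar u^1$, $\dot\Psi = A_2\Psi + B_2\bar u^2$ issued from $x_0$.

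I would then view $u^k = \bar u^k + \tilde u^k$, $k=1,2$, as independent controls of the decoupled linear system and consider the end-point map $L:L^\infty([0,\delta])^2\to\R^n$ sending $(\tilde u^1,\tilde u^2)$ to $(\Phi(\delta),\Psi(\delta)) - \varphi^0(\delta)$. Exactly as in the proof of Theorem~\ref{theo:control}, the cascade structure of $(A_k,B_k)$ ensures the Kalman rank condition, so $L$ splits into two surjective linear maps $L_1 : L^\infty([0,\delta]) \to \R^{n_1+1}$ and $L_2: L^\infty([0,\delta])\to\R^{n_2+1}$ for every $\delta > 0$. Because the images are finite-dimensional, one may fix once and for all smooth preimages $\tilde u^k_j$ of a basis of $\R^{n_k+1}$ and build a bounded linear right-inverse $L^\dagger:\R^n \to L^\infty([0,\delta])^2$ by taking linear combinations. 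Consequently, for any $\varepsilon > 0$ there exists $r = r(\delta,\varepsilon) > 0$ such that every $y\in B_r(\varphi^0(\delta))$ can be written $y = \varphi^0(\delta) + L(\tilde u^1,\tilde u^2)$ with $\|\tilde u^k\|_\infty \le \varepsilon$.

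Next I would recover the original coupled control by inverting \eqref{eq:choiceh}, i.e.\
\begin{equation*}
\dot h^1(t) = \frac{(\bar u^1+\tilde u^1)(t) - c_1 f_2(\Psi_1(t))}{(c_1/\sqrt{p_2})\sqrt{f_2(\Psi_1(t))}},\qquad \dot h^2(t) = \frac{(\bar u^2+\tilde u^2)(t) - c_2 f_1(\Phi_1(t))}{(c_2/\sqrt{p_1})\sqrt{f_1(\Phi_1(t))}}.
\end{equation*}
Since $f_1,f_2$ are Lipschitz and bounded below by $\underbar f > 0$, and Gronwall applied to the decoupled linear systems yields $\|\Phi-\Phi^0\|_\infty + \|\Psi-\Psi^0\|_\infty \le C\delta(\|\tilde u^1\|_\infty + \|\tilde u^2\|_\infty)$, one obtains $\|\dot h\|_\infty \le C'\varepsilon$, where $C'$ depends only on $\delta^*$ and the data $f_k,\underbar f,\nu_k,c_k,p_k$. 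Choosing $\varepsilon := M/C'$ and the associated $r$ exhibits an open neighborhood of $\varphi^0(\delta) = x^{x_0}(\delta)$ contained in $R^M_\delta(x_0)$, which is the desired conclusion.

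The step I expect to be most delicate is producing a right-inverse of $L$ that is \emph{bounded in the $L^\infty$-norm}: classical controllability arguments naturally live in the $L^2$-category, but the $M$-constraint in $R^M_\delta(x_0)$ is of $L^\infty$-type. The rescue is finite-dimensionality of the image $\R^n$: fixing a finite family of smooth basis-preimages once and for all, and combining them linearly, gives an explicit bounded right-inverse. A secondary issue is the self-consistent dependence of $\dot h^k$ on the perturbed trajectory $(\Phi,\Psi)$ itself; this is handled by Gronwall provided $\delta$ stays in a bounded interval, so $\delta^*$ may simply be chosen smaller than the period of $\Gamma$, which keeps all constants above uniformly controlled.
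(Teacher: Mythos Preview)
Your argument is correct and takes a genuinely different route from the paper's. The paper linearises the full coupled nonlinear system around the reference orbit, obtaining a time-dependent variational pair $(A(t),B(t))$; it then finds controls for the frozen system $(A(0),B(0))$, shows by a perturbation argument that the same controls give a spanning family for the time-dependent variational equation provided $\delta\le\delta^*$, and concludes via the inverse function theorem applied to the finite-dimensional map $\xi\mapsto x(\delta,\xi)$ with $\dot h(t,\xi)=\sum_k\xi_k u_k(t)$. By contrast, you never linearise the nonlinear system: you exploit the decoupling trick of Theorem~\ref{theo:control} to reduce to two \emph{autonomous} linear control problems with matrices $(A_k,B_k)$, use Kalman controllability together with finite-dimensionality of the target to produce an explicit $L^\infty$-bounded right-inverse of the end-point map, and recover $\dot h$ from the inverse of \eqref{eq:choiceh}, bounding it by Lipschitz continuity of $f_k$, the lower bound $\underbar f$, and a Gronwall estimate on $\Phi-\Phi^0,\Psi-\Psi^0$.

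Your approach is more elementary and more specific to the cascade structure: it avoids the inverse function theorem and the frozen-coefficient approximation, and in fact yields the conclusion for every $\delta>0$ (not only for small $\delta$), since Kalman controllability of $(A_k,B_k)$ holds for every time and the constant $C'$ is finite for any bounded $\delta$-range. The paper's approach, on the other hand, is closer to the general theory of local controllability (Lee--Markus, Sussmann, Bianchini--Stefani) and would transfer to systems where no exact decoupling is available; the price is the genuine smallness restriction $\delta\le\delta^*$ needed for the approximation $A(t)\approx A(0)$, $B(t)\approx B(0)$. Your identification of the $L^\infty$-right-inverse as the delicate point is apt, and the resolution via a fixed finite smooth basis of preimages is exactly right; it also ensures the controls are piecewise smooth, as required by the definition of $R^M_\delta(x_0)$.
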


The proof of this theorem is given in the Appendix. 

With these results at hand we are able to prove the following proposition which is the analogue of Proposition 3 of Rey-Bellet and Thomas \cite{reybellet}. \cite{reybellet} consider systems locally around equilibria, and therefore, the drift vector does not play a role in their case. In our case, we have to consider the control system locally around non constant periodic orbits -- hence the drift vector does play a crucial role since it induces a shift along the orbit which is not negli-geable in the study of the system. We shall use the following notation. For any periodic orbit $ \Gamma $ of \eqref{eq:cascade}, let $B_\varepsilon ( \Gamma ) = \{ x \in \R^n : dist (x, \Gamma ) < \varepsilon \} .$ 

\begin{prop}\label{prop:STLCgamma}
Grant Assumptions \ref{ass:1} and \ref{ass:4}. Let $\Gamma $ be a periodic orbit of \eqref{eq:cascade}. For any $\eta > 0 $ and $\varepsilon' > 0 $ there exists $ \varepsilon  > 0 $ such that $ \varepsilon < \varepsilon' /3 $ with the following properties. For all $x , y \in B_\varepsilon ( \Gamma )  , $ there exist $T> 0 $ and a control ${ h } \in {\tt H} $ such that $ \varphi^{( h , x)} ( T) = y ,$ $ \varphi^{ ( h , x)} ( s) \in B_{ 2 \varepsilon ' /3}( \Gamma ) $ for all $ 0 \le s \le T,$ and $ I_{x, T} ( \varphi^{ ( \tt h , x)}  ) \le \eta .$ 
\end{prop}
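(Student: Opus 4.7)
The approach is a three-phase construction exploiting two facts: (i) the shifted small time local controllability around points of $\Gamma$ (Theorem \ref{theo:stlc}), and (ii) the observation that following the deterministic flow $\dot x = b(x)$ along $\Gamma$ costs nothing. The idea is to first steer the initial point $x$ onto the orbit at a point shifted by a short time $\delta$, then follow $\Gamma$ for free over an appropriate portion of its period, then steer from $\Gamma$ onto the exact target $y$.

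First I would establish a uniform version of Theorem \ref{theo:stlc} along $\Gamma$. By compactness of $\Gamma$ and the continuity of the Lie brackets $\sigma^i, [\sigma^i, b], \dots$ that enter the proof of Theorem \ref{theo:stlc}, one can choose $\delta^* > 0$, a universal constant $M$, and $r>0$ such that for every $x_0 \in \Gamma$ and every $0 < \delta \le \delta^*$, the ball $B_{r}(x^{x_0}(\delta))$ is contained in $R^{M}_{\delta}(x_0)$. One then needs a robustness statement: by continuous dependence of the end-map $\tilde x \mapsto \varphi^{(h, \tilde x)}(\delta)$ on the initial condition, uniformly over controls with $\|\dot h\|_\infty \le M$, the reachable set from a point $\tilde x$ sufficiently close to $x_0$ still contains a neighborhood of $x^{x_0}(\delta)$. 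In particular, starting from such $\tilde x$ one can hit the point $x^{x_0}(\delta) \in \Gamma$ exactly in time $\delta$ using a control of norm at most $M$.

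With this preparation, given $\eta$ and $\varepsilon'$, I would choose $\delta \in (0, \delta^*)$ so small that $\tfrac12 \delta M^2 \le \eta/2$ and that any controlled trajectory on $[0, \delta]$ with $\|\dot h\|_\infty\le M$ starting inside $B_{\varepsilon'/3}(\Gamma)$ stays inside $B_{2\varepsilon'/3}(\Gamma)$. Then choose $\varepsilon < \varepsilon'/3$ small enough that the robustness step applies uniformly along the compact set $\Gamma$. Given $x, y \in B_\varepsilon(\Gamma)$, pick projections $\hat x, \hat y \in \Gamma$, and perform: (A) steer $x$ to $x^{\hat x}(\delta) \in \Gamma$ in time $\delta$ with cost $\le \eta/2$ and trajectory in $B_{2\varepsilon'/3}(\Gamma)$; (B) starting from $x^{\hat x}(\delta) \in \Gamma$, apply zero control and follow $b$ for a time $T_B \in [0, T^*)$, where $T^*$ is the period of $\Gamma$, chosen so that the deterministic trajectory lands on a point $z_2 \in \Gamma$ with $x^{z_2}(\delta) = \hat y$ --- cost $0$, trajectory on $\Gamma$; (C) from $z_2$, apply Theorem \ref{theo:stlc} to reach the point $y$ (which lies in $B_\varepsilon(\Gamma) \subset B_r(x^{z_2}(\delta))$ for $\varepsilon$ small enough) in time $\delta$ with cost $\le \eta/2$, trajectory in $B_{2\varepsilon'/3}(\Gamma)$. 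Concatenation yields a control of total time $T = 2\delta + T_B$, total cost $\le \eta$, and trajectory confined to $B_{2\varepsilon'/3}(\Gamma)$.

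The main obstacle I foresee is ensuring uniformity in $\delta^*$, $M$, and $r$ over $x_0 \in \Gamma$. Theorem \ref{theo:stlc} is stated pointwise and the constants are a priori allowed to depend on the base point; extracting a uniform statement relies on the compactness of $\Gamma$ together with continuity (in $x_0$) of the brackets used in the Sussmann/Bianchini--Stefani arguments. A related subtlety is handling the fact that in Phase (A) we do not start exactly on $\Gamma$: one must show that a small perturbation of the initial condition does not destroy the fact that $x^{\hat x}(\delta)$ lies in the interior of the reachable set. This can be addressed by combining the openness of the reachable set from $\hat x$ with the joint continuity of $(h, \tilde x) \mapsto \varphi^{(h,\tilde x)}(\delta)$, possibly via a Brouwer-type argument or an application of the implicit function theorem to the shift-corrected end-map.
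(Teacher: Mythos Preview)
Your three-phase skeleton (steer onto $\Gamma$, travel along $\Gamma$ at zero cost, steer off $\Gamma$ to $y$) is exactly the paper's strategy, and your treatment of phases (B) and (C) matches the paper's almost verbatim. The one substantive difference is in phase (A): you propose a robustness argument showing that if $\tilde x$ is close to $\hat x\in\Gamma$ then the forward reachable set from $\tilde x$ still contains $x^{\hat x}(\delta)$, whereas the paper sidesteps this by applying Theorem \ref{theo:stlc} to the \emph{time-reversed} system $\dot{\tilde\varphi}=-b(\tilde\varphi)+\sigma(\tilde\varphi)\dot h$. Concretely, the paper shows that for each $u\in\Gamma$ the reverse-flow reachable set $\tilde R^M_\delta(u)$ contains a ball around $x^u(-\delta)$; covering $\Gamma$ by finitely many such balls guarantees that any $x\in B_\varepsilon(\Gamma)$ lies in some $\tilde R^M_\delta(u_l)$, and reversing time yields a forward trajectory from $x$ to $u_l\in\Gamma$. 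This neatly avoids the perturbation-of-initial-condition issue you flag as the main obstacle. Your approach is also valid---the inverse function theorem argument underlying Theorem \ref{theo:stlc} is stable under small $C^1$ perturbations of the end-map, so a degree or Brouwer argument does go through---but the paper's time-reversal device is cleaner and requires no additional analysis. Likewise, where you seek uniform constants $\delta^*,M,r$ over $\Gamma$, the paper simply extracts a finite subcover and takes the minimum of the finitely many pointwise radii; this is the standard route and is what you would most likely end up doing anyway.
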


\begin{proof} 
We start by introducing some additional objects needed in the proof. We denote by $ \tilde \varphi^{(h, x) } $ the inverse flow,  solution of 
$$ d \tilde \varphi^{(h, x )} (t) = - b ( \tilde \varphi^{(h, x )} (t) ) dt + \sigma ( \tilde \varphi^{(h, x )}  (t) ) \dot h (t) dt  , \tilde \varphi^{(h, x )} (0 ) =x ,$$
and write $ \tilde R^M_\delta (x)  $ for the set of attainable points for the inverse flow, using a piecewise smooth control which is bounded by $M.$

We then choose $M$ and $ \delta_0  $ such that $M^2 \delta_0 < \eta $ and such that $ R_{\delta}^{M} ( x) \subset B_{ 2\varepsilon ' /3} ( \Gamma ) $ and $ \tilde R_{\delta}^{M} ( x) \subset B_{ 2\varepsilon ' /3} ( \Gamma  ) $ for all $ \delta \le \delta_0, $ for all $ x \in  \Gamma .$  In the sequel, $ \delta \le \delta_0 $ will be fixed. 

For any $ z \in \Gamma , $ there exist $ \varepsilon_1 (z)  $ such that $B_{\varepsilon_1 (z) } ( x^z (\delta) ) \subset R_{\delta}^{ M} ( z)  ,$ by Theorem \ref{theo:stlc}. 

Notice that $ x^z (\delta) \in \Gamma $ if $ z \in \Gamma .$ Notice moreover that for any $ v \in \Gamma $ there exists $ w = x^v (- \delta ) \in \Gamma $ such that $ x^w (\delta) = v .$ Therefore, by compactness of $\Gamma, $ there exists a finite collection $z_1, \ldots , z_K \in \Gamma $ such that $ \Gamma \subset \bigcup_{k=1}^K B_{\frac13 \varepsilon_1 ( z_k) } ( x^{z_k} (\delta)  )$ and such that for all $k, $ $  B_{ \varepsilon_1 ( z_k) } ( x^{z_k} (\delta)  ) \subset R_\delta^M  (z_k) .$ 

Applying the same arguments as above to the inverse flow,  for all $x \in \Gamma $ there exists $ \varepsilon_2 ( x)$ such that $B_{\varepsilon_2 (x)  } ( x^x (-\delta) ) \subset \tilde R_{\delta}^{ M} ( x)  .$ 
Then again, by compactness of $\Gamma,$  $ \Gamma \subset \bigcup_{k=1}^K B_{\frac13 \varepsilon_2 ( u_k) } ( x^{u_k} (-\delta)  )$ for $ u_1 , \ldots , u_K \in \Gamma $ (where we suppose w.l.o.g.\ that the number of balls is the same in the two coverings) and such that $B_{ \varepsilon_2 ( u_k) } ( x^{u_k} (-\delta)  )
\subset \tilde R^M_\delta (u_k) $ for all $k.$
 
Choose now 
$$ \varepsilon \le   \min \{ \varepsilon_1 ( z_k )/4  , k \le K \}  \wedge \min \{ \varepsilon_2 ( u_k )/4  , k \le K \} \wedge \varepsilon'/3 .$$  
Let $y \in B_ \varepsilon ( \Gamma ).$ Then there exists $ y^* 
\in \Gamma $ such that $  \| y  - y^* \| \le \varepsilon .$ Let $k$ be such that $ \| y^* - x^{z_k } (\delta)  \| \le  \varepsilon_1 ( z_k )/3 .$ Hence, $\| y - x^{z_k } (\delta) \| <  \varepsilon_1 ( z_k )  $ and as a consequence, $ y \in R_{\delta}^{ M} ( z_k) .$  In the same way, for any $ x \in B_\varepsilon ( \Gamma ) $  there exists $ u_l $ such that $ x \in \tilde R_\delta^{ M} ( u_l ) .$  Therefore, there exist $ { h }_1  $ and ${ h}_2 $ with $ \| \dot h_1 \|_\infty \le M,  \| \dot h_2 \|_\infty \le M ,$ such that 
$  \varphi^{ ( h_2 , z_k) } (\delta) = y  $ and $ \tilde \varphi^{ ( h_1 , u_l ) } (\delta) = x .$ 

By reversing the time, this yields a trajectory $ \varphi^{( h_1 , x) }  (t) $ with $ \varphi^{( h_1 , x)}  (0)  = x $ and  $\varphi^{(  h_1 , x)}  (\delta)  = u_l . $ Then it suffices to choose $ T$ such that $ x^{ u_l}_{T - 2 \delta } = z_k $ -- this is just a shift on the orbit. 

To finish the proof, observe that by construction the produced trajectory $ \varphi = \varphi^{( h_1 , x) }   $ is such that $\varphi ( s)  \in  B_{ 2 \varepsilon ' /3 } ( \Gamma ) $ for all $ s \le T ,$ since we have chosen $M$ and $ \delta $ such that $ R_{\delta}^{M} ( x) \subset B_{ 2\varepsilon ' /3} ( \Gamma ) $ and $ \tilde R_{\delta}^{M} ( x) \subset B_{ 2\varepsilon ' /3} (\Gamma  ) $ for all $ \delta \le \delta_0, $ for all $ x \in \Gamma .$
\end{proof}

\section{Large deviations and asymptotics of the invariant measure}\label{sec:6} 
Recall that we have introduced controlled trajectories
$$
\varphi = \varphi^{({h}, x)} \; \mbox{solution to}\; d \varphi (t) = b (  \varphi (t) ) dt +  \sigma( \varphi (t) ) \dot h(t) dt, \; \mbox{with $\varphi (0)=x,$}  
$$
together with their rate function  on time intervals $[0, t_1]$ 
$$
I_{x, t_1} (f) = \inf_{ {h} \in {\tt H} : \varphi^{( { h}, x)}  (t) = f(t) , \; \forall t \le t_1 } \frac12 \int_0^{t_1} [| \dot{ h}^1(s)|^2+ | \dot{ h}^2 (s) |^2] ds  .
$$
This rate function is not explicit since the diffusion matrix $\sigma $ is degenerate. It is however a ``good rate function", i.e.\ all of its level sets $ \{ f : I_{x, t_1 } ( f) \le \alpha \} $ are compact, and the following large deviation principle for the sample paths of the diffusion $ Y^N$ is well known, going back to Freidlin and Wentzell \cite{FW}. We quote if from \cite{DZ}. 

\begin{theo}[Corollary 5.6.15 of \cite{DZ}]
Grant Assumption \ref{ass:1}. Let $Y^N $ denote the solution of \eqref{eq:diffusionsmallnoise}, starting from $ x \in \R^n .$ Then for any $x \in \R^n $ and for any $ t_1 < \infty , $ the rate function $ I_{x, t_1 } ( f) $ is a lower semicontinuous function on $ C ( [0, t_1 ], \R^n ) $ with compact level sets. Moreover, the family of measures $ Q_x^N $ satisfies the large deviation principle on $ C ( [0, t_1 ], \R^n ) $ with rate function $ I_{x, t_1} ( f) .$ \\
(i ) For any compact $ K \subset \R^n $ and any closed $ F \subset C ( [0, t_1 ], \R^n ) , $ 
\begin{equation}\label{eq:compact}
\limsup_{ N \to \infty } \frac1N \log \sup_{x \in K } Q_x^N ( F) \le - \inf_{x \in K} \inf_{ f \in F} I_{x, t_1} ( f) .
\end{equation}
(ii) For any compact $ K \subset \R^n $ and any open $ O  \subset C ( [0, t_1 ], \R^n) , $ 
\begin{equation}\label{eq:open}
\liminf_{ N \to \infty } \frac1N \log \inf_{x \in K } Q_x^N ( O) \geq - \sup_{x \in K} \inf_{ f \in O} I_{x, t_1} ( f) .
\end{equation}
\end{theo}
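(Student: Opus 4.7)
The plan is to apply the standard Freidlin--Wentzell machinery as developed in \cite{FW} and \cite{DZ}, adapted to the degenerate setting. The starting point is Schilder's theorem, which asserts that the family of laws of $(B/\sqrt N)_{N\ge 1}$ on $C([0,t_1],\R^2)$ satisfies a large deviation principle with good rate function $\frac12\int_0^{t_1}(|\dot g^1|^2+|\dot g^2|^2)ds$ for $g\in\tt H$ (and $+\infty$ otherwise). If the It\^o map $\Psi^x : g \mapsto \varphi^{(g,x)}$ sending Cameron--Martin paths to the solution of $d\varphi = b(\varphi)dt + \sigma(\varphi)\dot g\,dt$ could be extended continuously from $(C([0,t_1],\R^2),\|\cdot\|_\infty)$ to $C([0,t_1],\R^n)$, the contraction principle would immediately give the LDP with rate function $I_{x,t_1}(f)=\inf\{\tfrac12\int_0^{t_1}|\dot h|^2 : \Psi^x(h)=f\}$, which is precisely \eqref{eq:rate}.

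Because $\sigma$ is state-dependent and only Lipschitz, one cannot directly apply the contraction principle to $\Psi^x$; instead one proceeds by the Azencott--Priouret method of exponential equivalence. First I would introduce piecewise-linear (dyadic) approximations $B^{(m)}$ of the driving Brownian motion, and let $Y^{N,m}$ be the corresponding ODE solutions driven by $B^{(m)}/\sqrt N$. The map $B^{(m)}\mapsto Y^{N,m}$ is continuous by classical ODE theory, so Schilder plus contraction gives an LDP for $Y^{N,m}$ with an explicit rate function. Then I would show that $Y^N$ and $Y^{N,m}$ are exponentially equivalent as $m\to\infty$, uniformly in $N$, by Gronwall-type estimates combined with standard exponential martingale inequalities using the Lipschitz property of $b$ and $\sigma$ (Assumption~\ref{ass:1}(ii) and the bounded, lower-bounded nature of $f_1,f_2$). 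Passing to the limit $m\to\infty$ transfers the LDP to $Y^N$ with rate $I_{x,t_1}$; the uniformity in the initial condition over compact $K$ for \eqref{eq:compact} and \eqref{eq:open} is obtained by checking that all estimates are locally uniform in $x$, which holds because $b$ and $\sigma$ are locally Lipschitz and the exponential moment bounds only depend on the Lipschitz constants and the compact $K$.

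To finish, I would verify that $I_{x,t_1}$ is a good rate function: lower semicontinuity follows from Fatou's lemma applied to the variational expression, and compactness of level sets $\{f:I_{x,t_1}(f)\le\alpha\}$ is proved by noting that on such a set the controls $h$ satisfy $\frac12\int|\dot h|^2\le\alpha$, hence form a weakly relatively compact set in $\tt H$, and the continuity of the map $h\mapsto \varphi^{(h,x)}$ in the weak topology (which follows from the weak/strong Arzel\`a--Ascoli argument using the growth bound on $b$ and $\sigma$) transfers compactness to the image.

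The main obstacle is the exponential equivalence step, where the degeneracy of $\sigma$ itself is not really a problem (Schilder is for the $2$-dimensional Brownian motion, and the degeneracy is absorbed through $\Psi^x$), but the fact that $\sigma$ contains $\sqrt{f_k}$ with only $C^{1/2}$ behavior in principle would be troublesome if $f_k$ could vanish; here Assumption~\ref{ass:1}(i) providing the uniform lower bound $\underline f >0$ is essential, as it ensures $x\mapsto\sqrt{f_k(x)}$ is Lipschitz and the martingale exponentials admit the requisite bounds. Given all these ingredients, the theorem reduces to an application of Corollary~5.6.15 in \cite{DZ}.
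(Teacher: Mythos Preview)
The paper does not prove this theorem at all: it is stated as a direct quotation of Corollary~5.6.15 in Dembo--Zeitouni \cite{DZ}, with no argument given beyond the citation. Your outline is a faithful sketch of the standard Azencott--Priouret proof that underlies that corollary (Schilder for the driving Brownian motion, exponential equivalence of the polygonal approximations, identification of the rate function via the variational formula), so in that sense your approach and the paper's are the same---you are simply reproducing the cited reference rather than invoking it.

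One small point worth flagging: the drift $b$ in \eqref{eq:drift} is not bounded (it has linear growth in the coordinates $x_i$), so the version of the result in \cite{DZ} that applies directly to bounded Lipschitz coefficients needs the usual localization step (truncate $b$ outside a large ball, show the truncated processes are exponentially good approximations, then pass to the limit). You allude to this when you speak of ``locally Lipschitz'' and ``growth bound on $b$'', but it would be worth making the localization explicit, since this is precisely what distinguishes Corollary~5.6.15 from the earlier bounded-coefficient Theorem~5.6.7 in \cite{DZ}. The diffusion matrix $\sigma$, on the other hand, is genuinely bounded and globally Lipschitz thanks to the lower bound $\underline f>0$, as you correctly note.
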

 
\subsection{Proof of Theorem \ref{theo:main}}
We are now able to give the proof of our main result, Theorem \ref{theo:main}. It follows closely Freidlin and Wentzell \cite{FW}, adapted to the situation of degenerate diffusions in Rey-Bellet and Thomas \cite{reybellet}. 

Recall that $ K = \{ x^* \} \cup \bigcup_{l=2}^L K_l $ denotes the $\omega-$limit set of \eqref{eq:generalcontrolsystem}. To start, we stress that the diffusion process $Y^N $ solution of \eqref{eq:cascadeapprox} satisfies the two main assumptions of \cite{reybellet} which are the following.

\begin{ass}
The diffusion process $Y^N ( t) $ has a hypo-elliptic generator, and for any $x$ belonging to the $\omega-$limit set $K ,$ the control system associated with 
\eqref{eq:generalcontrolsystem} is small-time locally controllable (in a sense of a shift along periodic orbits, as stated in Theorem \ref{theo:stlc}).
\end{ass}

\begin{ass}
The diffusion process is strongly completely controllable and for any $ T > 0, $ the cost function $V_T ( x,y) $ is upper semicontinuous in $x$ and $y.$ 
\end{ass}

We now follow Freidlin-Wentzell \cite{FW} and put 
$$ U = B_\varepsilon ( K) , V = B_{\bar \varepsilon } ( K), $$
for $\varepsilon < \bar \varepsilon $ such that $ 3 \varepsilon < \bar \varepsilon .$ We introduce 
$$ \tau_0 = 0, \sigma_n = \inf \{ t > \tau_n : Y^N (t) \in  V^c\} , \tau_{n+1}  = \inf \{ t > \sigma_{n} : Y^N ( t) \in  U \} , n \geq 0 .$$
Since $Y^N$ is Harris-recurrent with invariant measure $\mu^N $ being of full support and therefore charging $ U, V$ and $V^c, $ we have $ \tau_n < \tau_{n +1} < \infty , \sigma_n < \sigma_{n+1} < \infty  $ almost surely,  and $ \sigma_n , \tau_n \uparrow \infty $ as $ n \to \infty . $ Writing $ U_n := Y^N ( \tau_n ) , n \geq 1 ,$ $U_n $ is a Markov chain taking values in $ \partial U $ which is a compact set. In particular, $( U_n)_n$ admits a (unique) invariant probability measure $\ell_N $ on $ \partial U $ (since $Y^N$ is Harris), and the invariant measure $ \mu^N $ of the process $ Y^N$ can be decomposed as 
$$ \mu^N ( D) =\frac{1}{ c (N)} \int_{\partial U} \ell_N (dx) E^N_x \int_0^{\tau_1} 1_D ( Y^N ( t) ) dt =: \frac{1}{ c (N)}  \nu^N ( D) ,$$
where 
$$c(N) = \int_{\partial U} \ell_N (dx) E^N_x \tau_1 .$$ 

We now take a regular open set $ D,$ i.e.\ a set such that $ \partial D$ is a piecewise smooth manifold,  with $ dist (D, K ) > \Delta .$  Let $ \tau_D = \inf \{ t> 0  : Y^N ( t) \in D \} $ be the associated hitting time. Then we have the following result.

\begin{lem}\label{lem:1}
Grant Assumptions \ref{ass:1} and \ref{ass:4}. Let 
$$ S := \inf \{ t \geq 0 : Y^N \in B_{\varepsilon } ( K) \cup D \} .$$
Then for any compact set $E, $ 
$$ \lim_{T \to \infty } \limsup_{ N \to \infty }  \sup_{x \in E} Q^N_x ( S > T ) = 0 .$$
\end{lem}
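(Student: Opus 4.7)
The plan is to reduce the claim to a single deterministic time $T_0$: since the event $\{ S > T\}$ is decreasing in $T,$ it suffices to exhibit some $T_0 = T_0 (E, \varepsilon ) $ with
$$ \limsup_{N \to \infty } \sup_{x \in E} Q^N_x ( S > T_0 ) = 0 .$$
The strategy is classical Freidlin--Wentzell: when $N$ is large, $Y^N$ stays uniformly close to the deterministic trajectory $x^x ( \cdot ) $ issued from $x$ with overwhelming probability on any fixed time interval, and it remains to show that $x^x ( \cdot )$ visits $B_{\varepsilon/2 } (K) $ by time $T_0, $ uniformly in $x \in E.$

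For this deterministic claim, I would use Proposition \ref{prop:lyapunov} applied to the drift $b$ alone to see that every trajectory $x^x( \cdot) $ is bounded and hence has a nonempty compact $ \omega-$limit set, which by Theorem \ref{theo:orbit} is contained in $K.$ In particular $dist ( x^x (t), K ) \to 0 $ for every $x, $ so I may pick $t_x < \infty $ with $x^x (t_x ) \in B_{\varepsilon/3} (K) .$ Continuous dependence of the flow on the initial condition on the compact interval $[0, t_x ] $ provides a neighborhood of $x$ on which $x^y ( t_x ) \in B_{\varepsilon/2} (K) , $ and extracting a finite subcover of $E$ yields a uniform time $T_0 := \max_i t_{x_i} $ with the property that every $y \in E$ satisfies $x^y ( t ) \in B_{\varepsilon/2} (K) $ for some $ t \le T_0 .$

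I would then introduce
$$ F := \{ f \in C ([0, T_0], \R^n ) : f( s ) \notin B_\varepsilon ( K) \mbox{ for all } 0 \le s \le T_0\} , $$
which is closed in the uniform topology since $B_\varepsilon (K) $ is open. Clearly $\{ S > T_0 \} \subset \{ Y^N|_{[0, T_0]} \in F \} , $ and the sample-path upper bound \eqref{eq:compact} applied to the compact set $E$ and the closed set $F$ yields
$$ \limsup_{N \to \infty } \frac1N \log \sup_{x \in E} Q^N_x ( S > T_0 ) \le - \inf_{x \in E, f \in F } I_{x, T_0 } ( f ) .$$
The proof thus reduces to showing that the right-hand infimum is strictly positive.

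Suppose this fails, so that there exist sequences $x_n \in E$ and $f_n \in F$ with $I_{x_n, T_0 } ( f_n ) \to 0.$ By compactness of $E,$ pass to a subsequence with $x_n \to x^\star \in E;$ using the joint compactness of the level sets of $(x, f ) \mapsto I_{x, T_0} ( f ) $ (obtained from weak $L^2-$compactness of the associated controls $h_n$ and the Lipschitz/at most linear growth of $b$ and $\sigma $ ensured by Assumption \ref{ass:1} together with the lower bound on $f_1, f_2$ which keeps $\sigma$ from degenerating), extract a further subsequence along which $f_n$ converges uniformly to some $f^\star$ with $I_{x^\star, T_0 } ( f^\star ) = 0.$ Then $f^\star $ must coincide with the deterministic trajectory $ x^{x^\star} ( \cdot ) , $ which by the first step lies outside $F;$ on the other hand $F$ is closed and $f_n \in F, $ whence $f^\star \in F,$ a contradiction. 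The main obstacle is precisely this joint compactness / joint lower semicontinuity of the rate function as the initial condition varies along a compact set $E;$ the cascade structure of the drift together with the smoothness and strict positivity of $f_1, f_2$ granted by Assumption \ref{ass:1} are what make passage to the limit in the controlled ODE \eqref{eq:generalcontrolsystem} well-behaved.
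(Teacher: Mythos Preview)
Your argument is correct but takes a substantially different and more elaborate route than the paper. The paper's proof is two lines: since $S \le \tau_\varepsilon$, Markov's inequality gives $Q^N_x(S > T) \le T^{-1} E^N_x \tau_\varepsilon$, and Proposition~\ref{prop:lyapunov} supplies the uniform bound $\sup_N E^N_x \tau_\varepsilon \le c\,G(x)$ with $G$ continuous, hence bounded on the compact $E$. This yields $\sup_{x \in E} Q^N_x(S>T) \le C/T$ for \emph{all} $N$, and letting $T \to \infty$ finishes. No large deviations, no structure of the $\omega$-limit set, no compactness of level sets is needed. Your approach instead exploits the uniform LDP upper bound \eqref{eq:compact} together with Theorem~\ref{theo:orbit} to produce a single deterministic time $T_0$ by which every limiting trajectory from $E$ has entered $B_{\varepsilon/2}(K)$, and then argues positivity of the infimum of the rate function by a joint lower-semicontinuity/compactness argument in $(x,f)$. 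This is valid (the weak-$L^2$ compactness of controls plus Arzel\`a--Ascoli and the Lipschitz coefficients from Assumption~\ref{ass:1} do give the required passage to the limit), and it actually yields the stronger conclusion that $\sup_{x\in E} Q^N_x(S>T_0)$ decays exponentially in $N$ for the fixed $T_0$. The trade-off is that you invoke heavier machinery and the full description of the $\omega$-limit set, whereas the paper's Lyapunov route is elementary, uniform in $N$ without any asymptotics, and reusable elsewhere (it is indeed used again in bounding $\sup_{y \in \partial D} E^N_y \tau_1$ later in the proof of Theorem~\ref{theo:main}).
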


\begin{proof}
We have 
$$ Q^N_x ( S > T )  \le \frac1T E^N_x \tau_\varepsilon .$$
But by Proposition \ref{prop:lyapunov}, $ \sup_N  E^N_x  \tau_\varepsilon  \le C G ( x) ,$ where $G$ does not depend on $N.$ The fact that $G$ is bounded on the compact set $E$ then implies the result.  
\end{proof}

In the following we establish two classical results on the growth rate of the expected escape time $ E^N_x \sigma_0 $  that will be useful in the sequel. They are analogous to the results of \cite{FW}, transposed to the hypo-elliptic context of our model.

\begin{prop}\label{prop:sigma_0first}
Grant Assumptions \ref{ass:1} and \ref{ass:4}.  Given $h > 0 ,$ for $ \varepsilon < \bar \varepsilon $ such that $ 3 \varepsilon < \bar \varepsilon $ sufficiently small, 
$$\liminf_{N \to \infty} \frac1N \log  \inf_{x \in \partial B_\varepsilon (K) } E^N_x \sigma_0 \geq  - h  .$$ 
\end{prop}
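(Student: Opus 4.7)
The goal is to show that $E_x^N\sigma_0$, the mean time to exit $V=B_{\bar\varepsilon}(K)$, cannot decay exponentially in $N$, uniformly over $x\in\partial B_\varepsilon(K)$. I will in fact prove the stronger statement
$$\liminf_{N\to\infty}\frac{1}{N}\log\inf_{x\in\partial B_\varepsilon(K)}E_x^N\sigma_0\;\geq\;0,$$
which immediately yields the stated $-h$ bound for every $h>0$. The intuition is that for a fixed small time horizon $T_0$, the deterministic (zero-noise) flow leaving $\partial B_\varepsilon(K)$ has not had time to reach $\partial V$, so by the Freidlin--Wentzell upper bound the diffusion $Y^N$ exits $V$ during $[0,T_0]$ only with exponentially small probability.

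To implement this, set $M_b:=\sup_{x\in\overline{V}}\|b(x)\|$, which is finite since $K$ is compact (hence $\overline{V}$ is compact) and $b$ is continuous. Define $T_0:=(\bar\varepsilon-\varepsilon)/(2M_b)>0$. For any $x\in\partial B_\varepsilon(K)$, a standard continuation argument applied to $\varphi^{(0,x)}$, the solution of \eqref{eq:generalcontrolsystem} with $h\equiv 0$, gives $\|\varphi^{(0,x)}(t)-x\|\leq M_b\, t$ as long as $\varphi^{(0,x)}$ stays in $V$; hence on $[0,T_0]$ it remains inside the strictly smaller neighborhood $B_{(\varepsilon+\bar\varepsilon)/2}(K)$, a positive distance $(\bar\varepsilon-\varepsilon)/2$ from $\partial V$.

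Let $F_0\subset C([0,T_0],\R^n)$ denote the closed set of paths that leave $V$ before time $T_0$. Applying the sample-path large deviation upper bound \eqref{eq:compact} on the compact $K_0:=\partial B_\varepsilon(K)$ yields
$$\limsup_{N\to\infty}\frac{1}{N}\log\sup_{x\in K_0}Q_x^N(\sigma_0\leq T_0)\;\leq\;-\alpha,\qquad \alpha:=\inf_{x\in K_0}\inf_{f\in F_0}I_{x,T_0}(f).$$
The key point, and the main technical step, is to verify $\alpha>0$. Arguing by contradiction: a minimizing sequence $(x_n,f_n)$ with $x_n\in K_0$, $f_n\in F_0$ and $I_{x_n,T_0}(f_n)\to 0$ would, by compactness of $K_0$ and goodness of the rate function (compact level sets of $I$ on $C([0,T_0],\R^n)$), admit a subsequential limit $(x_*,f_*)$ with $x_*\in K_0$ and $f_n\to f_*$. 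Lower semicontinuity forces $I_{x_*,T_0}(f_*)=0$, so $f_*=\varphi^{(0,x_*)}$; but the previous step keeps this zero-cost trajectory inside $B_{(\varepsilon+\bar\varepsilon)/2}(K)$, contradicting $f_*\in F_0$ (which is closed).

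Having $\alpha>0$ in hand, for $N$ large enough $\sup_{x\in K_0}Q_x^N(\sigma_0\leq T_0)\leq e^{-N\alpha/2}$, so $\inf_xQ_x^N(\sigma_0> T_0)\geq 1/2$, and
$$\inf_{x\in K_0}E_x^N\sigma_0\;\geq\;T_0\cdot\inf_{x\in K_0}Q_x^N(\sigma_0>T_0)\;\geq\;T_0/2.$$
Consequently $\frac{1}{N}\log\inf_xE_x^N\sigma_0\geq\frac{1}{N}\log(T_0/2)\to 0$, which exceeds $-h$. The only delicate point is the compactness/contradiction argument for $\alpha>0$, which uses the joint structure of the rate function on the compact collar $K_0$ together with the margin built into the choice of $T_0$.
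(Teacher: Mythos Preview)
Your argument is correct and in fact establishes the stronger conclusion $\liminf_{N\to\infty}\frac{1}{N}\log\inf_{x}E_x^N\sigma_0\ge 0$, valid for any $\varepsilon<\bar\varepsilon$ without a smallness requirement. The only step that deserves a line or two more is the extraction of the subsequential limit $(x_*,f_*)$: since the good-rate-function property is stated for each fixed starting point, you need the joint compactness of $\{(x,f):x\in K_0,\ I_{x,T_0}(f)\le c\}$. This follows in the standard way by writing $f_n=\varphi^{(h_n,x_n)}$ with $\|\dot h_n\|_{L^2}\to 0$, using Gronwall (with Lipschitz $b$ and bounded $\sigma$) for uniform bounds and equicontinuity, and passing to the limit in the integral equation via weak $L^2$-convergence of $\dot h_n$. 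With that filled in, the contradiction with $f_*=\varphi^{(0,x_*)}\notin F_0$ is clean.

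The paper's own proof proceeds quite differently. Instead of the LDP \emph{upper} bound on the closed set $F_0=\{\sigma_0\le T_0\}$, it uses the LDP \emph{lower} bound \eqref{eq:open} on an open tube $\mathcal{O}$ around explicitly constructed controlled paths $\Psi^x$ that stay inside $B_{2\bar\varepsilon/3}(K)$. These paths are built via the small-time local controllability results (Lemma~\ref{cor:STLCxstar}, Theorem~\ref{theo:stlc}, Proposition~\ref{prop:STLCgamma}): one steers $x\in\partial B_\varepsilon(K)$ into $K$ and back out at cost $\le h$, obtaining $Q_x^N(\mathcal{O})\ge e^{-Nh}$ and hence $E_x^N\sigma_0\ge T_0\,Q_x^N(\mathcal{O})$. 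Your route is more elementary---it avoids the controllability machinery entirely, using only that the zero-noise flow cannot cross the annulus of width $\bar\varepsilon-\varepsilon$ in time $T_0$---and yields a sharper bound. The paper's route, on the other hand, reuses the STLC constructions that are in any case needed for the companion upper bound (Proposition~\ref{prop:sigma_0second}) and for Proposition~\ref{prop:STLCgamma}, so there is no extra cost in that context.
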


An analogous result holds for the upper bound of $ E^N_x \sigma_0 .$ 

\begin{prop}\label{prop:sigma_0second}
Grant Assumptions \ref{ass:1} and \ref{ass:4}. 
Given $h > 0 ,$ for $ \varepsilon < \bar \varepsilon $ such that $ 3 \varepsilon < \bar \varepsilon $ sufficiently small, 
$$\limsup_{N \to \infty} \frac1N \log  \sup_{x \in \partial B_{\varepsilon} (K ) } E^N_x \sigma_0 \le   h  .$$ 
\end{prop}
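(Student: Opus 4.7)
The strategy is the standard Freidlin--Wentzell upper bound for exit times, adapted to the present hypo-elliptic setting. I will show that, for $\bar \varepsilon$ small enough, from any starting point $y\in \bar V$ the diffusion $Y^N$ exits $V = B_{\bar\varepsilon}(K)$ within a fixed deterministic time $T_0$ with probability at least $e^{-Nh/2}$. Once this is established, the strong Markov property at the times $kT_0$ gives $Q_x^N(\sigma_0 > kT_0) \le (1-e^{-Nh/2})^k$, hence $\sup_{y\in\bar V} E_y^N \sigma_0 \le T_0\, e^{Nh/2}$ for $N$ large, from which $\limsup_N N^{-1}\log\sup_{x\in\partial B_\varepsilon(K)} E^N_x\sigma_0 \le h/2 < h$ follows immediately. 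The probability bound itself will come from producing, for each $y\in \bar V$, a controlled trajectory of duration $\le T_0$ and cost $\le h/3$ ending in $V^c$, and then invoking the sample path LDP \eqref{eq:open} together with the upper semicontinuity of $V_{T_0}$ from Theorem \ref{theo:6}.

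\textbf{Construction of a low-cost exit path.} I first shrink $\bar\varepsilon$ so that the sets $B_{\bar\varepsilon}(K_i)$, $1\le i\le L$, are pairwise disjoint, which guarantees that exiting any single $B_{\bar\varepsilon}(K_i)$ already exits $V$. For $y \in \bar V \setminus B_\varepsilon(K)$ a short trajectory reaches $V^c$ at cost bounded by a constant multiple of $(\bar\varepsilon - \varepsilon)^2$, obtained via Theorem \ref{theo:control} combined with the Delarue--Menozzi cost estimate used in Theorem \ref{theo:6}. For $y\in B_\varepsilon(K)$ lying near a component $K_i$, I first apply Proposition \ref{prop:STLCgamma} (or Lemma \ref{cor:STLCxstar} when $K_i=\{x^*\}$) to steer $y$ to a fixed reference point $z_i \in K_i$ at cost $\le h/6$, with the entire trajectory remaining inside $B_{2\bar\varepsilon/3}(K_i)$. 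I then append a short trajectory from $z_i$ to a point at distance slightly larger than $\bar\varepsilon$ from $K_i$, produced exactly as in Step 5 of the proof of Theorem \ref{theo:6} via Proposition~4.2 of \cite{delaruemenozzi}. The cost estimate \eqref{eq:explode?} then shows that the cost of this second leg is of order $\bar\varepsilon^2$ (up to a fixed time-rescaling $\delta$), and therefore can be made $\le h/6$ by a further reduction of $\bar\varepsilon$. Concatenating the two legs gives the desired exit trajectory of total cost $\le h/3$.

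\textbf{Uniformisation, iteration, and main obstacle.} For each such $y$, upper semicontinuity of $V_{T_0}$ (Theorem \ref{theo:6}) together with the open-set LDP bound \eqref{eq:open} furnish an open neighborhood $\mathcal{O}_y$ of the constructed path in $C([0,T_0],\R^n)$ all of whose elements have exited $V$ before time $T_0$, and satisfying $\liminf_N N^{-1}\log Q^N_y(Y^N\in \mathcal{O}_y) \ge -h/3$; the same bound extends to a small neighborhood of $y$ by upper semicontinuity. By compactness of $\bar V$, a finite subcover produces a single $T_0$ and the uniform lower bound $\inf_{y\in\bar V}Q^N_y(\sigma_0 \le T_0) \ge e^{-Nh/2}$ for all $N$ sufficiently large, and the Markov iteration outlined in the plan concludes the proof. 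The principal technical obstacle is the second substep of the construction above, namely the quantitative cost bound for leaving a small tube around a (possibly orbitally asymptotically stable) periodic orbit: the cascade structure of the drift and the Delarue--Menozzi estimate \eqref{eq:explode?} are precisely what makes this exit cost vanish with $\bar\varepsilon$ and are therefore the core input, exactly as in the proof of Theorem \ref{theo:6}.
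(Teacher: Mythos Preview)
Your overall strategy --- Markov iteration based on a uniform lower bound $\inf_{y\in\bar V}Q^N_y(\sigma_0\le T_0)\ge e^{-Nh/2}$ --- is exactly the paper's, and your two-leg construction of the exit path from points $y\in B_\varepsilon(K)$ (first reach $K$ at small cost via Proposition~\ref{prop:STLCgamma}/Lemma~\ref{cor:STLCxstar}, then push out past $\partial V$ at small cost) is essentially the same as the paper's Step~2, except that the paper uses Proposition~\ref{prop:STLCgamma} for the outward leg as well rather than the Delarue--Menozzi bound.

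There is, however, a genuine gap in your treatment of starting points $y$ in the annulus $\bar V\setminus B_\varepsilon(K)$. You assert that ``a short trajectory reaches $V^c$ at cost bounded by a constant multiple of $(\bar\varepsilon-\varepsilon)^2$'', but the Delarue--Menozzi estimate does not deliver this. The cost of steering from $y$ to a target $z$ in time $\delta$ is of order $\delta\,|T_\delta^{-1}(\theta_\delta(y)-z)|^2$, where $\theta_\delta(y)$ is the \emph{deterministic flow} at time $\delta$; since $K$ is the $\omega$-limit set, $\theta_\delta(y)$ is typically driven towards $K$, i.e.\ \emph{deeper into} $V$, so the required displacement $\theta_\delta(y)-z$ to reach $V^c$ is not controlled by $\bar\varepsilon-\varepsilon$. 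And taking $\delta$ small does not help either, because the anisotropic scaling $T_\delta^{-1}$ blows up as $\delta\to 0$ in the slow directions of the cascade. So neither a ``short'' time nor a ``short'' Euclidean distance yields the claimed quadratic cost bound.

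The paper sidesteps this entirely: it does \emph{not} build an exit path from every $y\in\bar V$. Instead it introduces $S=\inf\{t:Y^N_t\in B_\varepsilon(K)\cup V^c\}$ and invokes Lemma~\ref{lem:1} (a direct consequence of the Lyapunov estimate of Proposition~\ref{prop:lyapunov}) to get a fixed $T_1$ with $\limsup_N\sup_{x\in\bar V}Q^N_x(S>T_1)<1$, uniformly in $N$ and without any LDP input. Only after the process has reached $\overline{B_\varepsilon(K)}$ (or already exited) does one use the two-leg low-cost construction to obtain $\inf_{x\in\overline{B_\varepsilon(K)}}Q^N_x(\sigma_0\le T_2)\ge e^{-Nh}$. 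Strong Markov at $S$ then gives $\inf_{x\in\bar V}Q^N_x(\sigma_0\le T_1+T_2)\ge c\,e^{-Nh}$ and the iteration concludes as you describe. The fix to your argument is therefore to replace the annulus claim by an appeal to Lemma~\ref{lem:1}.
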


The proofs of the two propositions are given in the Appendix. 

Recall that the $\omega -$limit set $K= \{ x^*\}  \cup \bigcup_{l=2}^L K_l $ is divided into disjoint subsets consisting of equivalence classes induced by the equivalence relation $ x \sim y ,$ where we say that $ x \sim y $  if and only if $ V (x, y) = V ( y, x ) = 0.$ Following \cite{FW}, we now introduce 

$$ \tilde V(K_i , K_j ) =  \inf_T \inf \{ I_{x, T } ( \varphi ) : \varphi ( 0) \in K_i, \varphi (T) \in K_j, \varphi ( t) \notin \bigcup_{ l \neq i, j } K_l , 0 \le t\le T \},$$
$$ \tilde V (K_i, z) = \inf_T \inf \{ I_{x, T } ( \varphi ) : \varphi ( 0) \in K_i, \varphi (T) = z, \varphi ( t) \notin\bigcup_{ l \neq i } K_l , 0 \le t\le T \} ,$$
for $i = 1, 2, \ldots , L.$ 
We also put 

$$ \tilde V_i (x, y ) = \inf_T \inf \{ I_{x, T } ( \varphi ) : \varphi ( 0) = x , \varphi (T) = y, \varphi ( t) \notin \bigcup_{ l \neq i } K_l ,,  0 \le t\le T \} .$$

As a consequence of the small-time local controllability as stated in Corollary \ref{cor:STLCxstar} and of Proposition \ref{prop:STLCgamma} we have the following useful result.

\begin{lem}\label{lem:STLC}
Grant Assumptions \ref{ass:1} and \ref{ass:4}.  For all $ i, $ for all $ x, y \in K_i ,$ and for all $h$ there exists $ \delta $ such that $ | x - \tilde x | < \delta , $ $|y - \tilde y | < \delta $ imply that $ \tilde V_i ( \tilde x, \tilde y ) < h .$ 
\end{lem}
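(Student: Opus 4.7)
The plan is to reduce the claim to the two local controllability results already proved, namely Lemma~\ref{cor:STLCxstar} at the equilibrium $x^*$ and Proposition~\ref{prop:STLCgamma} on each periodic orbit. A useful preliminary observation is that $K_1,\ldots,K_L$ are pairwise disjoint compact subsets of $\R^n$, so one can fix $\varepsilon'>0$ (depending only on $i$) with $B_{\varepsilon'}(K_i)\cap K_l=\emptyset$ for every $l\neq i$. Any controlled trajectory confined to $B_{\varepsilon'}(K_i)$ then automatically respects the avoidance constraint $\varphi(t)\notin\bigcup_{l\neq i}K_l$ appearing in the definition of $\tilde V_i$.

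\emph{Case of a periodic orbit $K_i=\Gamma$.} Apply Proposition~\ref{prop:STLCgamma} with the given $h$ (playing the role of $\eta$) and with the $\varepsilon'$ fixed above. It yields some $\varepsilon>0$, $\varepsilon<\varepsilon'/3$, such that any two points of $B_\varepsilon(\Gamma)$ can be joined by a controlled trajectory of cost less than $h$ that stays inside $B_{2\varepsilon'/3}(\Gamma)$, hence away from $\bigcup_{l\neq i} K_l$. Setting $\delta:=\varepsilon$, any $\tilde x,\tilde y$ within distance $\delta$ of $x,y\in\Gamma$ belong to $B_\varepsilon(\Gamma)$, and the trajectory provided by Proposition~\ref{prop:STLCgamma} witnesses $\tilde V_i(\tilde x,\tilde y)<h$.

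\emph{Case of the equilibrium $K_1=\{x^*\}$} (so that necessarily $x=y=x^*$). I would argue in the same spirit, now with Lemma~\ref{cor:STLCxstar} in place of Proposition~\ref{prop:STLCgamma}. First fix any bound $M$, and then $\tau>0$ so small that $M^2\tau<h/2$ and that both $R^M_\tau(x^*)$ and the analogous set $\tilde R^M_\tau(x^*)$ for the reverse flow are contained in $B_{\varepsilon'}(x^*)$; the reverse flow satisfies the weak H\"ormander condition as well, since the iterated Lie brackets spanning $\R^n$ are insensitive to the sign of the drift. Applying Lemma~\ref{cor:STLCxstar} to both flows then furnishes $\delta>0$ with $B_\delta(x^*)\subset R^M_\tau(x^*)\cap\tilde R^M_\tau(x^*)$. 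For $\tilde x,\tilde y\in B_\delta(x^*)$ one steers $\tilde x$ to $x^*$ via a reverse-flow control (read forwards in time), and then $x^*$ to $\tilde y$ via a forward control. Concatenation gives a path in $B_{\varepsilon'}(x^*)$ of total cost at most $M^2\tau+M^2\tau<h$.

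The only slightly delicate point is the equilibrium case, where one must turn the inclusion ``$x^*$ lies in the interior of the reachable set $R^M_\tau(\tilde x)$'' into a genuine $L^2$ bound on the corresponding control. This can be done exactly as in Step~5 of the proof of Theorem~\ref{theo:6} via Proposition~4.2 of~\cite{delaruemenozzi}, or alternatively by a compactness argument on the set of piecewise smooth controls with $\|\dot h\|_\infty\le M$; for the periodic orbits, Proposition~\ref{prop:STLCgamma} already packages this estimate, so there is nothing extra to verify.
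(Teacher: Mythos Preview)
Your argument is correct and matches the paper's approach: the lemma is stated there without proof, simply as a consequence of Corollary~\ref{cor:STLCxstar} and Proposition~\ref{prop:STLCgamma}, and you have supplied exactly the natural details (isolating $K_i$ inside a tube $B_{\varepsilon'}(K_i)$ disjoint from the other $K_l$, and at the equilibrium reproducing the forward/backward construction of Proposition~\ref{prop:STLCgamma}). One small wrinkle: Lemma~\ref{cor:STLCxstar} as written concerns $R_\tau(x^*)$ rather than $R^M_\tau(x^*)$, so the cleanest way to get the bounded-control inclusion you use is to observe that the proof of Theorem~\ref{theo:stlc} in the Appendix works verbatim at the equilibrium (where $x^{x^*}(\delta)=x^*$), after which the $L^2$ cost bound is immediate and no appeal to \cite{delaruemenozzi} is needed.
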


We put $ B_\varepsilon (K_i ) = \{ y \in \R^n : dist (y, K_i ) < \varepsilon \} , $ for $ i =1, 2, \ldots, L.$ We quote the following lemma from \cite{reybellet}.

\begin{lem}[Lemma 4 of \cite{reybellet}]\label{lem:4}
Grant Assumptions \ref{ass:1} and \ref{ass:4}. 
For any $ h > 0 $ there exist $ \varepsilon < \bar \varepsilon $ sufficiently small such that 
$$ \limsup_{N \to \infty } \frac1N \log \sup_{ x \in \partial B_{\bar \varepsilon }(K_i) } Q_x^N ( \tau_D < \tau_1) \le 
- \left( \inf_{ z \in D} \tilde V ( K_i , z ) - h \right) $$
and  
$$  \limsup_{N \to \infty } \frac1N  \log \sup_{ x \in \partial B_{\bar \varepsilon }(K_i) } Q_x^N ( Y^N ( \tau_1 ) \in \partial B_{ \varepsilon }(K_j) ) \le - 
\left( \tilde V( K_i, K_j ) - h \right) .$$
\end{lem}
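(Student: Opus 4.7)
The approach follows the standard Freidlin--Wentzell machinery as carried out for equilibria in \cite{reybellet}, but adapted to periodic orbits via the shifted small-time local controllability. Four ingredients will be combined: (a) the sample path LDP of Corollary 5.6.15 of \cite{DZ} applied on bounded time windows; (b) Lemma \ref{lem:1} together with Proposition \ref{prop:sigma_0second} to control the number of excursions; (c) the upper semicontinuity of $V_T$ from Theorem \ref{theo:6}, crucial for passing between infima with fixed vs.\ flexible endpoints in the degenerate setting; and (d) Proposition \ref{prop:STLCgamma} / Lemma \ref{lem:STLC} to glue cheap bridges inside $K_i$ without paying more than $h$.

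\textbf{Step 1 (single-window LDP).} Fix $h > 0$. Pick $T$ large enough, via Lemma \ref{lem:1}, so that $\limsup_N \frac{1}{N}\log \sup_x Q_x^N(S > T) \le -H$ for some $H \gg \max_{i,j} \tilde V(K_i,K_j) \vee \sup_{z\in D}\tilde V(K_i,z)$, the supremum running over $x$ in a large compact set containing $\bar V$ and $D$. For fixed $x \in \partial B_{\bar\varepsilon}(K_i)$, the event that $Y^N$ reaches $D$ before returning to $U$ within $[0,T]$ is of the form $\{Y^N_{[0,T]} \in F\}$, with $F \subset C([0,T],\R^n)$ the closed set of paths $\varphi$ with $\varphi(0)=x$, $\varphi(t_0) \in \overline D$ for some $t_0 \le T$, and $\varphi(s) \notin U$ for $s \le t_0$. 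Corollary 5.6.15 of \cite{DZ} gives
\begin{equation*}
\limsup_N \frac{1}{N}\log \sup_{x \in \partial B_{\bar\varepsilon}(K_i)} Q_x^N(Y^N_{[0,T]} \in F) \le -\bigl(I^\ast - h/4\bigr), \qquad I^\ast := \inf_{x, \varphi \in F} I_{x,T}(\varphi).
\end{equation*}

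\textbf{Step 2 (bounding $I^\ast$ by $\tilde V(K_i, D)$).} Given any $(x,\varphi) \in F$ of near-minimal cost, Proposition \ref{prop:STLCgamma} furnishes a control $\tilde h$ and time $\tau \le T_0$ steering a point $y \in K_i$ to $x$ with $I_{y,\tau}(\tilde\varphi) \le h/4$ and $\tilde\varphi([0,\tau]) \subset B_{\bar\varepsilon}(K_i)$. Concatenating $\tilde\varphi$ and $\varphi$ produces a path from $K_i$ to $D$ that avoids all $K_l$, $l \ne i$, provided the approach neighborhoods $B_{\bar\varepsilon}(K_l)$ were chosen disjoint. Hence $I^\ast \ge \tilde V(K_i, D) - h/4$; upper semicontinuity of $V$ from Theorem \ref{theo:6} handles the last-step approximation of $\overline D$ by $D$ so that the bound passes to the infimum over $z \in D$.

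\textbf{Step 3 (iteration via the strong Markov property).} Decompose $\{\tau_D < \tau_1\} \subset \bigcup_{k \ge 0}\{\tau_D \in (kT,(k{+}1)T],\ \tau_1 > \tau_D\}$. On each block, either $Y^N$ fails to enter $U \cup D$ within $T$ time units (probability $\le e^{-NH}$ by Step 1's choice of $T$), or it reaches $D$ directly (probability $\le e^{-N(\tilde V(K_i,D)-h/2)}$ by Steps 1--2), or it returns to $U$ and starts a fresh excursion from some $\partial B_{\bar\varepsilon}(K_{i'})$. Proposition \ref{prop:sigma_0second} bounds the total number of excursions within any time $e^{Nh/2}$ polynomially in $N$, and the tail beyond that contributes at most $e^{-NH}$. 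Summing geometric series yields the first bound.

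\textbf{Step 4 (the second bound).} Replace $D$ by $\partial B_\varepsilon(K_j)$ and redo Steps 1--3; the target now excludes visits to $B_\varepsilon(K_l)$ for $l \ne i,j$, matching the definition of $\tilde V(K_i,K_j)$. The same prepending construction works, using Proposition \ref{prop:STLCgamma} at $K_i$ and upper semicontinuity of $V$ to absorb the endpoint slack of size $\bar\varepsilon$ on $\partial B_\varepsilon(K_j)$.

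\textbf{Main obstacle.} The non-trivial novelty with respect to \cite{reybellet} is that $x \in \partial B_{\bar\varepsilon}(K_i)$ must be linked to $K_i$ by a genuinely low-cost bridge even though $K_i$ is a periodic orbit: the drift shifts along $\Gamma$ unavoidably, so one cannot ``stand still'' at $K_i$. Proposition \ref{prop:STLCgamma} is precisely designed for this, and Theorem \ref{theo:6} (via the Delarue--Menozzi scaling) ensures that endpoint perturbations on $\partial B_{\bar\varepsilon}(K_i)$, $\partial B_\varepsilon(K_j)$ and $\partial D$ contribute $o_\varepsilon(1)$ to the cost---without these, neither the upper nor the matching lower bounds on $I^\ast$ could be extracted with the degenerate diffusion matrix $\sigma$.
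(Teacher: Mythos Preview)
Your overall strategy---sample-path LDP on a finite window, a cheap bridge from $K_i$ to $\partial B_{\bar\varepsilon}(K_i)$ via Proposition \ref{prop:STLCgamma}, and a tail estimate for long excursions---is exactly what the paper does. The paper's own proof is in fact a one-liner: once Lemma \ref{lem:STLC} (i.e.\ the consequence of Proposition \ref{prop:STLCgamma}) is available, the argument of Lemma 4 in \cite{reybellet} goes through verbatim, periodic orbits notwithstanding. So your identification of the ``main obstacle'' and of Proposition \ref{prop:STLCgamma} as the new ingredient is spot on.

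That said, your Step 3 is confused and would not work as written. The event $\{\tau_D<\tau_1\}$ is a \emph{single-excursion} event: starting from $\partial B_{\bar\varepsilon}(K_i)$, the process hits $D$ without ever entering $U=B_\varepsilon(K)$. There are no ``fresh excursions from some $\partial B_{\bar\varepsilon}(K_{i'})$'' to iterate over, and Proposition \ref{prop:sigma_0second} (an upper bound on $E_x^N\sigma_0$) plays no role here. The correct decomposition is the much simpler
\[
\{\tau_D<\tau_1\}\;\subset\;\{\tau_D\le T,\ Y^N_s\notin U\ \text{for }s\le\tau_D\}\;\cup\;\{S>T\},
\]
where $S$ is the first hitting time of $U\cup D$. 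The first set is closed in path space and is handled by your Steps 1--2; the second is made negligible by choosing $T$ large. On this last point, note that Lemma \ref{lem:1} as stated only gives $\sup_x Q_x^N(S>T)\to 0$, not an exponential rate $e^{-NH}$; the exponential bound you invoke in Step 1 is a separate (standard) Freidlin--Wentzell fact, namely that any path remaining in the compact annulus $\overline{B_{\bar\varepsilon}(K)}\setminus(U\cup D)$ for time $T$ has rate-function cost growing linearly in $T$ (compactness of level sets of $I_{x,T}$ plus the absence of $\omega$-limit points there). You should cite this directly rather than Lemma \ref{lem:1}.
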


\begin{proof}
Once Lemma \ref{lem:STLC} established, the proof is the same as the proof of Lemma 4 of \cite{reybellet}. The fact that most of the sets $ K_i$ are periodic orbits does not change the proof. 
\end{proof}

Small time local controllability around $x^* $ and around periodic orbits $\Gamma $ are also sufficient to obtain the lower bound obtained by \cite{reybellet} in their Lemma 5: 

\begin{lem}[Lemma 5 of \cite{reybellet}]
Grant Assumptions \ref{ass:1} and \ref{ass:4}. 
For any $ h > 0 ,$ for any $ \varepsilon < \bar \varepsilon $ sufficiently small,
$$ \liminf_{N \to \infty } \frac1N  \log \inf_{ x \in \partial B_{\bar \varepsilon }(K_i)} Q_x^N ( \tau_D < \tau_1) \geq 
- \left( \inf_{ z \in D} \tilde V ( K_i , z ) + h \right) $$
and 
$$  \liminf_{N \to \infty }\frac1N  \log \inf_{ x \in \partial B_{\bar \varepsilon }(K_i) } Q_x^N ( Y^N ( \tau_1 ) \in \partial B_{ \varepsilon }(K_j) ) \geq - 
\left( \tilde V( K_i, K_j ) + h \right) .$$
\end{lem}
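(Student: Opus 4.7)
The plan is to follow the Freidlin-Wentzell lower-bound scheme as adapted in \cite{reybellet}, using the degenerate-diffusion ingredients already established in this paper: small-time local controllability at $x^*$ (Lemma \ref{cor:STLCxstar}) and its ``shifted'' version around periodic orbits (Theorem \ref{theo:stlc}, Proposition \ref{prop:STLCgamma}), upper semicontinuity of $V_T$ (Theorem \ref{theo:6}), and the sample-path LDP lower bound \eqref{eq:open}. Fix $h > 0$. For the first inequality, I would begin by choosing a near-minimizer $z^* \in \bar D$ of $\tilde V(K_i, \cdot)$ and, by the very definition of $\tilde V(K_i, z^*)$, a time $T^* > 0$ and a controlled path $\psi: [0, T^*] \to \R^n$ from some $y^* \in K_i$ to $z^*$, disjoint from $\bigcup_{l \neq i} K_l$, of cost at most $\tilde V(K_i, z^*) + h/4$.

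Given any starting point $x \in \partial B_{\bar\varepsilon}(K_i)$, I would then use Proposition \ref{prop:STLCgamma} (or Lemma \ref{cor:STLCxstar} when $K_i = \{x^*\}$) to prepend a short controlled path $\phi$ from $x$ to a point arbitrarily close to $y^*$, staying in a prescribed small neighborhood of $K_i$, at rate at most $h/4$. Concatenating $\phi$ with a slightly perturbed version of $\psi$, and invoking the upper semicontinuity of $V_T$ (Theorem \ref{theo:6}) to absorb the cost perturbation at the glueing point, produces a path $\tilde\psi: [0,T] \to \R^n$ from $x$ into the interior of $D$, avoiding $\bigcup_{l \neq i} K_l$, of total cost at most $\inf_{z \in D}\tilde V(K_i, z) + 3h/4$. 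Applying \eqref{eq:open} to a sufficiently narrow sup-norm tube around $\tilde\psi$, and using compactness of $\partial B_{\bar\varepsilon}(K_i)$ to obtain uniformity in $x$, yields the claimed lower bound, the residual $h/4$ being absorbed in the LDP remainder. The second inequality is treated identically, with the target $D$ replaced by $\partial B_\varepsilon(K_j)$: one picks $\psi$ ending at the first entry into $\partial B_\varepsilon(K_j)$ after having first exited $V$, so that the tube event implies $Y^N(\tau_1) \in \partial B_\varepsilon(K_j)$.

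The hard part will be the geometric verification that a path staying close to $\tilde\psi$ indeed realizes $\{\tau_D < \tau_1\}$: the cycle times insist that the trajectory first exit $V = B_{\bar\varepsilon}(K)$ and then not re-enter $U = B_\varepsilon(K)$ before hitting $D$. The path $\tilde\psi$ inherited from the definition of $\tilde V$ may in principle cross $V$ several times (that definition only forbids contact with the other $K_l$). To repair this I would modify $\tilde\psi$ on its final excursion: letting $\tau_\ast$ be the last time at which it meets $\partial V$ before arriving at $D$, I would re-route it on $[\tau_\ast, T]$ through $V^c$ until it reaches $D$, using the STLC to splice controls continuously at arbitrarily small additional cost. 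This is precisely where the periodic-orbit version of STLC, Theorem \ref{theo:stlc}, is essential in place of classical local controllability: the drift induces a non-trivial shift along each orbit $\Gamma$, and Proposition \ref{prop:STLCgamma} guarantees that the cost of this final re-routing can be kept below $h/4$ as $\varepsilon, \bar\varepsilon \to 0$.
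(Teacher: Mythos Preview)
Your approach is essentially the one the paper adopts: the paper gives no explicit proof here, only the sentence ``small time local controllability around $x^*$ and around periodic orbits $\Gamma$ are also sufficient to obtain the lower bound obtained by \cite{reybellet} in their Lemma 5,'' and then states the lemma. Your outline (pick a near-optimal path $\psi$ from $K_i$, prepend a cheap connector via Proposition \ref{prop:STLCgamma} / Lemma \ref{cor:STLCxstar}, apply the LDP lower bound \eqref{eq:open} to a tube) is exactly the Freidlin--Wentzell / Rey-Bellet--Thomas scheme the paper invokes.

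One point in your final paragraph deserves correction. You are right to worry that a sample path in a narrow tube around $\tilde\psi$ must realize $\{\tau_D<\tau_1\}$, and that the definition of $\tilde V$ only forbids contact with the other $K_l$, not re-entry into $U$. However, your proposed fix---``re-route on $[\tau_*,T]$ through $V^c$ \ldots\ using the STLC''---does not work as stated: the small-time local controllability results of this paper (Lemma \ref{cor:STLCxstar}, Theorem \ref{theo:stlc}, Proposition \ref{prop:STLCgamma}) are established only at points of $K$, not at an arbitrary point $\tilde\psi(\tau_*)\in\partial V$, so you cannot invoke them to splice controls there. The standard repair is simpler and purely geometric: restrict $\psi$ to the time interval after its \emph{last} exit from $B_{\bar\varepsilon}(K_i)$. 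This tail starts on $\partial B_{\bar\varepsilon}(K_i)$, ends in $D$, never returns to $B_{\bar\varepsilon}(K_i)$, and stays at positive distance from $\bigcup_{l\neq i}K_l$; hence for $\bar\varepsilon$ (and a fortiori $\varepsilon$) small enough it lies entirely in $V^c$, so a tube around it avoids $U$. You then connect the given $x\in\partial B_{\bar\varepsilon}(K_i)$ to the starting point of this tail via Lemma \ref{lem:STLC} (both points are $\bar\varepsilon$-close to $K_i$), at cost $<h/4$. The concatenated path first stays near $K_i$ and then lives in $V^c$ until it hits $D$, so any nearby sample path satisfies $\tau_D<\tau_1$. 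The same last-exit trick handles the second inequality with $K_j$ in place of $D$.
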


Also, the lower bound of Lemma 6 of \cite{reybellet} is easily verifiable in our context, and we obtain

\begin{lem}[Lemma 6 of \cite{reybellet}]\label{lem:6}
Grant Assumptions \ref{ass:1} and \ref{ass:4}. 
For any $ h > 0 , $ $ \liminf_{N \to \infty } \frac1N \log \nu^N ( \R^n ) \geq - h .$ 
\end{lem}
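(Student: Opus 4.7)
The plan is to reduce the claim to the lower bound on mean exit times already contained in Proposition \ref{prop:sigma_0first}. Recalling the definition
$$\nu^N(\R^n) = \int_{\partial U} \ell_N(dx)\, E_x^N \tau_1$$
and that $\ell_N$ is a probability measure on $\partial U$, I would first use the trivial inequality $\tau_1 \ge \sigma_0$, which holds almost surely by construction of the sequence of stopping times (one has to exit $V$ before returning to $U$). This gives
$$\nu^N(\R^n) \;\ge\; \int_{\partial U} \ell_N(dx)\, E_x^N \sigma_0 \;\ge\; \inf_{x\in\partial U} E_x^N \sigma_0.$$

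Given $h>0$, I would then shrink $\varepsilon$ (and, if needed, $\bar\varepsilon$) so that the hypotheses of Proposition \ref{prop:sigma_0first} are met for this $h$. Since $U = B_\varepsilon(K)$ and hence $\partial U = \partial B_\varepsilon(K)$, the proposition yields
$$\liminf_{N\to\infty} \frac{1}{N}\log\inf_{x\in\partial U} E_x^N \sigma_0 \;\ge\; -h,$$
and combined with the display above this immediately gives $\liminf_{N\to\infty} \frac{1}{N}\log \nu^N(\R^n) \ge -h$.

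There is essentially no obstacle here: this is the easy direction of the two-sided estimate needed for Theorem \ref{theo:main}, and the real work has already been done in Proposition \ref{prop:sigma_0first} (which in turn relied on the controllability results of Section \ref{sec:5} and the shifted small-time local controllability from Theorem \ref{theo:stlc}). The only mild care required is to choose $\varepsilon$ small enough for Proposition \ref{prop:sigma_0first} to apply while keeping the constraints $\varepsilon < \bar\varepsilon$ and $3\varepsilon < \bar\varepsilon$ used throughout this section.
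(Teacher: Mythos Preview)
Your argument is correct and follows exactly the route indicated in the paper: the paper's proof simply invokes the estimate $\inf_{x\in\partial B_\varepsilon(K)} E_x^N\sigma_0 \ge e^{-Nh}$ from Proposition \ref{prop:sigma_0first}, and you have spelled out the easy reduction $\nu^N(\R^n)=\int_{\partial U}\ell_N(dx)\,E_x^N\tau_1 \ge \inf_{x\in\partial U} E_x^N\sigma_0$ (using $\tau_1>\sigma_0$ and that $\ell_N$ is a probability measure) that connects this estimate to the claim.
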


\begin{proof}
The proof is the same as in \cite{reybellet}, once we have obtained the estimate 
\begin{equation}
\inf_{ x \in \partial B_\varepsilon (K) } E^N_x ( \sigma_0 ) \geq e^{ - N h} ,
\end{equation}
as proven in Proposition \ref{prop:sigma_0first}. 
\end{proof}

In order to finish the proof of our main theorem, we follow now closely Rey-Bellet et Thomas \cite{reybellet} and Freidlin and Wentzell \cite{FW}.  

1) We have, as in formula (46) of \cite{reybellet}, 
\begin{multline*}
\nu^N ( D) \le \sum_{i=1}^L \ell^N ( \partial B_\varepsilon (K_i ) ) \sup_{x \in \partial B_\varepsilon (K_i ) } E^N_x \int_0^{\tau_1} 1_D ( Y^N_s) ds \\
\le L \max_i \ell^N ( \partial B_\varepsilon (K_i) ) \sup_{x \in \partial B_\varepsilon (K_i) }  Q^N_x ( \tau_D \le \tau_1 ) \sup_{y \in \partial D} E^N_y \tau_1 .
\end{multline*}
But $\sup_{y \in \partial D} E^N_y \tau_1 \le C ,$ for some fixed constant $C,$ by Proposition \ref{prop:lyapunov}. Moreover, we have, for sufficiently small $ \varepsilon < \bar \varepsilon, $ by Lemma \ref{lem:4},  for $x \in  \partial B_\varepsilon (K_i ) ,$ 
$$  Q^N_x ( \tau_D \le \tau_1 ) \le \exp ( - N  ( \inf_{z \in D} \tilde V (K_i, z ) - h/4 ) ) .$$
Define now the function  $ \tilde W ( x) $ in the same way as $ W(x) $ in \eqref{eq:W}, by replacing all $ V (K_m, K_n ) $ by $ \tilde V( K_m, K_n ) .$ By Freidlin-Wentzell \cite{FW}, Lemma 3.1 and 3.2 together with Lemma 4.1 and 4.2 of Chapter 6, we know that $\tilde W(x) = W(x),$  and therefore we obtain 
$$ \ell^N ( \partial B_\varepsilon (K_i) ) \le \exp ( - N [ W ( K_i ) - \min_j W (K_j)  - h/4 ] ) ,$$
for sufficiently large $N.$  As a consequence, following the lines of proof of  \cite{reybellet}, (47)--(50),
$$ \nu^N ( D) \le L C \exp ( - N [ \inf_{z \in D} W(z) - h/2 ] ) .$$ 
Finally, using the lower bound obtained for $ \nu^N ( \R^n ) $ in Lemma \ref{lem:6}, implying that 
$$ \nu^N ( \R^n) \geq \exp ( - \frac{Nh}{2} ) $$
for all $ N $ sufficiently large, we obtain 
$$ \mu^N ( D) \le LC \exp ( - N [ \inf_{z \in D} W(z) - h ] ),$$
concluding the first part of the proof. 

We now turn to the study of the lower bound in \eqref{eq:main}. We fix some $ \delta > 0 $ sufficiently small such that $ D_\delta = \{ x \in D : dist (x, \partial D) \geq \delta \} $ satisfies $ D_\delta \neq \emptyset .$ Let $ z \in D $ and fix $ i$ such that $ \tilde V ( K_i, z ) < \infty .$ Such an index $i$ always exists due to  the complete controllability property.\footnote{Indeed, for any $ j , $ $V (K_j, z ) < \infty .$ Suppose that the trajectory achieving the minimal cost to go from $K_j $ to $z $ visits the sets $K_j, $ followed by $ K_{n_1}, \ldots , K_{n_l} , $ before leaving the last of them, $K_{n_l}  ,$ and reaching the target $z.$ It is then sufficient to choose $i$ to be equal to the index of the last visited set, that is, $ i := n_l .$ } The proof of Theorem \ref{theo:6} shows that it is possible to choose $ \delta  $ so small that 
$$ \inf_{ z \in D_\delta } \tilde V ( K_i, z ) \le \inf_{z \in D} \tilde V ( K_i, z) + h/ 4.$$
This point is crucial for the rest of the proof.

Then
$$ \nu^N ( D) \geq \min_i \left[ \ell^N ( \partial B_\varepsilon(K_i ) ) \inf_{x \in \partial B_\varepsilon (K_i ) } Q^N_x ( \tau_{D_\delta} < \tau_1 ) \right]  \inf_{x \in \partial D_\delta } E^N_x \int_0^{\tau_1} 1_D ( Y^N_s) ds .$$

We will prove below that 
\begin{equation}\label{eq:last}
\inf_N  \inf_{x \in \partial D_\delta } E^N_x \int_0^{\tau_1} 1_D ( Y^N_s) ds \geq C > 0. 
\end{equation}
We then obtain, following exactly the arguments of \cite{reybellet}, the lower bound 
$$ \nu^N ( D) \geq C \exp ( - N [ \inf_{z \in D } W(z) + h/2 ] ) .$$ 
The proof is completed by an upper bound on $ \nu^N ( \R^n ) , $ which is obtained thanks to Proposition \ref{prop:sigma_0second}.

We finish the above proof by showing  \eqref{eq:last}. Let $Y_0^N =x \in \partial D_\delta .$ Then 
$$ E^N_x \int_0^{\tau_1} 1_D ( Y^N_s) ds = E^N_x \tau_{D^c }  , $$ 
where $ \tau_{D^c} = \inf \{ t \geq 0 : Y^N_t \in D^c \}. $ But for $ Y_0^N = x \in  \partial D_\delta , $ 
\begin{equation}\label{eq:delta}
\delta \le  \| Y_{\tau_{D^c }}^N - x \| \le \sup_{ z \in D} \| b(z) \| \tau_{D^c } + \frac{1}{\sqrt{N}} \sup_{s \le \tau_{D^c } } \| M_s \| , 
\end{equation}
where $ M_s = \int_0^s \sigma( Y_u^N ) d B_u. $ Since the coefficients of $\sigma $ are bounded, using the Burkholder-Davis-Gundy inequality, there exists a positive constant $ \chi $ only depending on the bound of $ b$ on $D$ and on the bounds of $ \sigma $ such that $ E_x^N  \sup_{s \le \tau_{D^c } } \| M_s \| \le \chi  \sqrt{E_x^N \tau_{D^c } }  ,$ and therefore,  
$$ \delta \le \chi \left(  E_x^N ( \tau_{D^c }) + \sqrt{ \frac{E_x^N ( \tau_{D^c })}{N }} \right)    , $$
which in turn implies that 
$$ \inf_N \inf_{ x \in \partial D_\delta } E_x^N ( \tau_{D^c })  = \inf_N \inf_{x \in \partial D_\delta } E^N_x \int_0^{\tau_1} 1_D ( Y^N_s) ds \geq C> 0 $$ 
for a constant $C$ not depending on $N.$

 \hfill $ \bullet $

\section*{Appendix}
{\it Proof of Theorem \ref{theo:stlc}.}
The proof follows the lines of the proof of Theorem 1 in Chapter 6 of Lee and Markus \cite{LeeMarkus}.  As there,  we write 
$ f( x, u) = b(x) +  \sigma ( x) u .$  We fix $x_0 \in \Gamma $ and write 
$$ A (t) =\left(  \frac{\partial f }{\partial x}\right)_{| x= x_t^{x_0}, u = 0} ,  \; B (t)  = \left( \frac{\partial f }{\partial u}\right)_{| x= x_t^{x_0}, u = 0} = \sigma (x_t^{x_0})  .$$ 
Let $ A :=  A ( 0) $ and $B = B(0) .$ 
Then it is easy to see that the columns of $ B, AB, A^2  B, \ldots , A^{n - 1 } B  $ span $ \R^n .$ %The linearization $X_t$ of $\varphi (t) - x_0 $ around $ x= 0 ,  \dot { h } = 0 $ is given by 
%\begin{equation}\label{eq:nonhomo}
% \dot X =  A X + B u + b(x_0 ) , \; X ( 0 ) = 0.
%\end{equation}
We start by considering the equation 
\begin{equation}\label{eq:homo}
  \dot Y =  A Y +  Bu , Y(0) = 0 .
\end{equation}
Denote by $Y^u (t) , t \le \delta ,  $ a solution to \eqref{eq:homo} driven by $ u(t)  , t \le \delta .$ 
The above system is controllable, since $ B, AB, A^2 B , \ldots , A^{n - 1 } B $ span $ \R^n .$ As a consequence, for every $M $ and for any $ \delta < 1 $ there exist controls $ u_1,  u_2  , \ldots , u_n   $ with $ \| u_i \|_\infty \le M $ such that 
\begin{equation}\label{eq:explicitcontrol}
 Y^{u_1} (\delta)  = r e_1,  \ldots , Y^{u_n } (\delta) = r e_n ,
\end{equation}
where $ e_1, \ldots, e_n $ are the unit vectors of $\R^n $ (Corollary 1 of Chapter 2 of Lee and Markus \cite{LeeMarkus}) and where $r > 0 $ is suitably small.

We wish now to replace the system \eqref{eq:homo} by the time dependent system 
\begin{equation}\label{eq:inhomo}
  \dot W =  A (t)  W +  B (t) u , W(0) = 0 , t \le \delta .
\end{equation}
Write $ W_k ( t) $ for the solution of $ \dot W_k (t) = A ( t) W_k ( t) + B(t) u_k ( t) ,$ where the $u_k (t) $ are given in \eqref{eq:explicitcontrol}. Then $ W_k  (t) $ is explicitly given by 
$$ W_k ( t) = \Phi (t)  \int_0^t \Phi^{-1}  (s) B(s) u_k ( s) ds ,$$
with $ \Phi (t) $ the matrix solution of $ \dot \Phi (t) = A(t) \Phi (t) , $ $ \Phi (0)   = Id.$ Writing $Y_k (t) = Y^{u_k } (t)  , $ we obtain similarly 
$$  Y_k ( t) = \overline \Phi (t) \int_0^t \overline \Phi^{-1} (s) B  u_k ( s) ds ,$$
with $ \overline \Phi (t)  = e^{ A t } $ (recall that $ A = A(0) $). We wish to show that $\|  Y_k ( t) - W_k (t) \| $ is small for $t$ sufficiently small. For that sake, note that there exists a constant $C$ such that for all $ t \le \delta , $ 
$$ \| \Phi (t) \| , \| \overline \Phi (t) \| ,  \| \Phi^{-1} (t)   \| , \| \overline \Phi^{-1} (t)  \| , \| B( t) \| , \| B\| \le C.$$ 
Since
$$ \Phi (t) = Id + \int_0^t A(s) \Phi (s) ds , \; \overline \Phi (t) = Id + \int_0^t A \overline \Phi (s) ds,$$
it follows from this that $ \| \Phi (t) - \overline \Phi (t) \| \to 0 $ as $ t \to 0.$ 

Fix $\varepsilon  > 0 $ such that $ \tilde e_1 , \ldots , \tilde e_n $ still span $ \R^n $ for all $ \tilde e_k \in B_\varepsilon ( r e_k ) , 1 \le k \le n .$ Then there exists $\delta^* $ such that for all $\delta \le \delta^* , $ $ W_k ( \delta ) \in B_\varepsilon ( Y_k ( \delta ) ) ,$ for all $
 1 \le k \le n, $ and therefore the following holds.
\begin{multline}\label{eq:super}
\mbox{The solutions of } \dot W_k (t) = A ( t) W_k ( t) + B(t) u_k ( t) ,\; W_k ( 0 ) = 0, \;  1 \le k \le n , \\
\mbox{ are such that }  W_1 (\delta ) , \ldots , W_n ( \delta ) \mbox{ span } \R^n .
\end{multline}

We are now able to conclude the proof, following the lines of Lee and Markus \cite{LeeMarkus}. Consider $x ( t, \xi )  $ which is the solution of  
$$
 d x( t , \xi ) =  b ( x( t , \xi ) ) dt + \sigma ( x( t , \xi ) ) \dot { h  } (t, \xi)  dt  , \; x(0, \xi ) = x_0, 
$$
following the control $ \dot {h} ( t, \xi ) = \xi_1 u_1 (t) + \ldots + \xi_n u_n ( t) , $ for $ | \xi_i | \le 1, 1 \le i \le n .$ It is clear that $ x ( t, 0 ) = x^{x_0}(t) .$ Hence, if we can prove that $ Z (t) = \left( \frac{\partial x (t , \xi)  }{\partial \xi }\right)_{ | \xi = 0 }$ is non-degenerate at $t = \delta, $ we are done, using the inverse function theorem. But
 
$$ \frac{ \partial x (t, x) }{\partial t} = f( x (t, \xi ) , \dot{h} (t, \xi ) )$$ and thus
$$ \frac{\partial }{\partial t} \frac{\partial x (t, \xi)  }{\partial \xi } = 
  f_x ( x(t, \xi ), \dot h (t, \xi ) ) \frac{\partial x}{\partial \xi } + f_u ( x (t, \xi ) , \dot h  (t, \xi) ) \frac{\partial \dot h }{\partial \xi } . $$ 
Notice that $ x (t, 0) = x^{x_0 }_{t } $ and $\dot h (t, 0) = 0.$ Thus we obtain 
$$ \dot Z  (t) =  A ( t) Z (t) + B(t) U (t) , $$
where $U (t) = (u_1 (t)  , \ldots , u_n (t)   ) .$ Writing $ z_1, \ldots , z_n $ for the columns of $ Z(t), $ this gives
$$ \dot z_k ( t) = A(t) z_k ( t) + B(t) u_k (t) , \; z_k ( 0) = 0 .$$ 
The solutions of this system are given by \eqref{eq:super}, and they are such that $ z_k ( \delta ), 1 \le k \le n ,$ span $\R^n .$ Therefore, $ Z ( \delta) $ is non-degenerate, and this concludes the proof. 
\hfill $\bullet $

{\it Proof of Proposition \ref{prop:sigma_0first}.} The proof follows closely the ideas of Chapter 5.7 of \cite{DZ}.

1) For all $x \in \partial B_\varepsilon (K) , $ by small time local controllability, there exists a smooth path $ \psi^x $ of length $ t^x $ such that $ \psi^x (t^x) \in K= \{x^* \} \cup \bigcup_{l=2}^L K_l $ and such that 
$ \psi^x (t) $ does not leave $ B_{2 \bar \varepsilon /3 }( K) $ for all $ t \le t^x .$ Moreover, this path can be chosen such that $ I_{x, t^x} ( \psi^x ) \le h/2.$ 

2) For all $ x_0 \in K $ there exists $z \in \partial B_\varepsilon (K) $ and a path $ \psi^{x_0} $ of length $ t^{x_0} $ steering $x_0$ to $z, $ during $[0, t^{x_0} ] , $ without leaving $ B_{2 \bar \varepsilon /3}(K) ,$ 
at a cost $ I_{x_0,  t^{x_0}} ( \psi^{x_0 } ) \le h/2 .$  

3) We concatenate the two paths $ \psi^x$ and then $ \psi^{x_0} $ to obtain a new trajectory $ \Psi^x $ of length $T^x = t^x + t^{x_0} $ steering $x$ to $z \in \partial B_\varepsilon (K) .$ Let then 
$$  T_0 := \inf_{ x \in \partial B_{\varepsilon}(K ) } T^x > 0 $$
and put 
$$ {\mathcal O} := \bigcup_{ x \in \partial B_\varepsilon (K) } \{ \varphi \in C ( [ 0, T_0], \R^n ) : \| \varphi - \Psi^x \|_\infty < \varepsilon/ 2  \} ,$$
which is an open set. Then 
$$ \liminf_{N \to \infty } \frac1N \log \inf_{x \in \partial B_\varepsilon (K) } Q^N_x (  {\mathcal O} ) \geq - h ,$$
which implies the assertion since $  Q^N_x ( Y^N \in {\mathcal O} ) \le Q^N_x ( \sigma_0 \geq T_0 ) \le \frac{E^N_x \sigma_0}{T_0} .$ 
\hfill $\bullet$

{\it Proof of Proposition \ref{prop:sigma_0second}.}

1) Let $ S =  \inf \{ t \geq 0 : Y^N \in B_{\varepsilon } ( K) \cup D \} , $ where $ D = (B_{\bar \varepsilon }(K) )^c .$ 
We know by Lemma \ref{lem:1} that there exists $ T_1 > 0 $ such that 
\begin{equation}\label{eq:(1)}
\limsup_{N \to \infty }  \sup_{x \in  \overline {B_{\bar \varepsilon }(K )}} Q_x^N ( S > T_1 ) < 1.
\end{equation}

2) We shall now show that there exists $ T_2$ such that 
\begin{equation}\label{eq:(2)}
\liminf_{N \to \infty } \frac1N \log \inf_{x \in \overline {B_\varepsilon (K)}} Q_x^N ( \sigma_0 \le T_2  ) \geq - h .
\end{equation}
Indeed, like in \cite{DZ}, page 231, we first construct, for all $x \in \overline {B_\varepsilon (K)}$ a smooth path $ \psi^x $ of length $t^x $ such that $ \psi^x (t^x) \in K$ and such that 
$ \psi^x (t) $ does not leave $ B_{2 \bar \varepsilon /3}(K )$ for all $ t \le t^x .$ Moreover, this path can be chosen such that $ I_{x, t^x} ( \psi^x ) \le h/2.$ 

We then fix $ \varepsilon ' > \bar \varepsilon $ such that $ 6 \bar \varepsilon < \varepsilon ' $ and apply Proposition \ref{prop:STLCgamma} to $ 2 \bar \varepsilon $ and $ \varepsilon '.$ This is possible if $ \bar \varepsilon $ is sufficiently small. Then for any $ x_0 \in K$ there exists $ z \in \partial B_{2 \bar \varepsilon }(K) $ and a path $ \psi^{x_0} $ of length $t^{x_0} $ steering $x_0$ to $z , $ during $ [0, t^{x_0} ],$ such that $ I_{x_0, t^{x_0 } } ( \psi^{x_0 } ) \le h/2 .$ We then concatenate the two paths and obtain a new path $ \Psi^x $  of length $ T^x = t^x + t^{x_0} , $ steering $x $ to $z,$ at cost $\le h.$ Let 
$$ T_2 = \sup_{ x \in   \overline {B_\varepsilon (K)} }  T^x < \infty $$
and 
$$ {\mathcal O} =\bigcup_{ x \in \overline {B_\varepsilon (K)} } \{ \varphi \in C ( [ 0, T_2], \R^n ) : \| \varphi - \Psi^x \|_\infty < \bar \varepsilon /  2  \} .$$
Then 
$$ \liminf_{N \to \infty } \frac1N \log \inf_{x \in \overline {B_\varepsilon (K)}} Q_x^N ( Y^N \in {\mathcal O} ) \geq - h ,$$
which implies \eqref{eq:(2)}, since $ \varphi \in {\mathcal O} $ implies that $ \sigma_0 ( \varphi ) \le T_2 .$ 

3) We deduce from the above discussion the following. 
$$ \inf_{ x \in  \overline {B_{\bar \varepsilon}(K ) }} Q^N_x ( \sigma_0 \le T := T_1 + T_2 ) \geq  \inf_{ x \in  \overline {B_{\bar \varepsilon }(K )} } Q^N_x ( S \le T_1 ) \cdot \inf_{ x \in \overline{ B_\varepsilon (K )} } Q^N_x ( \sigma_0 \le T_2 ) =: q .$$ 
By iteration, we obtain 
$$ \sup_{ x \in  \overline {B_{\bar \varepsilon} (K ) }} Q^N_x ( \sigma_0 > k T ) \le (1- q )^k , \mbox{ whence } \sup_{ x \in  \partial {B_{ \varepsilon }(K ) }}  E^N_x \sigma_0 \le \sup_{ x \in  \overline {B_{\bar \varepsilon }(K ) }}  E^N_x \sigma_0  \le \frac{T}{q}.$$  
But 
$$ q \geq e^{ - Nh }   \inf_{ x \in  \overline {B_{\bar \varepsilon }(K )} } Q^N_x ( S \le T_1 ) \geq c e^{- N h } , $$
for $ N $ sufficiently large. This implies the desired assertion. 
\hfill $ \bullet$ 

\section*{Acknowledgments}
I would like to thank an anonymous reviewer for his valuable comments and suggestions which helped me to improve the paper.
This research has been conducted as part of the project Labex MME-DII (ANR11-LBX-0023-01)  and as part of the activities of FAPESP Research,
Dissemination and Innovation Center for Neuromathematics (grant
2013/07699-0, S.\ Paulo Research Foundation).

 \end{document}